\definecolor{ForestGreen}{rgb}{0.1,0.6,0.05}
\definecolor{EgyptBlue}{rgb}{0.063,0.1,0.6}
\newtheorem{theorem}{Theorem}
\newtheorem{proposition}[theorem]{Proposition}
\newtheorem{lemma}[theorem]{Lemma}
\newtheorem{corollary}[theorem]{Corollary}
\theoremstyle{definition}
\newtheorem{definition}[theorem]{Definition}
\newtheorem{remark}[theorem]{Remark}
\let\OLDthebibliography\thebibliography
\renewcommand\thebibliography[1]{
	\OLDthebibliography{#1}
	\setlength{\parskip}{1pt}
	\setlength{\itemsep}{1pt plus 0.3ex}
}
\numberwithin{equation}{section}
\numberwithin{theorem}{section}
\numberwithin{equation}{section}
\numberwithin{theorem}{section}
\newcommand{\N}{\mathbb{N}}
\newcommand{\al} {\alpha}
\newcommand{\be} {\beta}
\newcommand{\ga}{\gamma}
\newcommand{\Om}{\Omega}
\newcommand{\la}{\lambda}
\def\inpr#1{\left\langle #1\right\rangle}
\title[Szeg\H{o}-Weinberger type inequalities]{Szeg\H{o}-Weinberger type inequalities for symmetric domains with holes}
\author[T.V.~Anoop]{T.V.~Anoop}
\author[V.~Bobkov]{Vladimir Bobkov}
\author[P.~Dr\'abek]{Pavel Dr\'abek}
\address[T.V.~Anoop]{\newline\indent
	Department of Mathematics,
	Indian Institute of Technology Madras, Chennai 36, India
}
\email{anoop@iitm.ac.in}
\address[V.~Bobkov]{
	\newline\indent 
	Institute of Mathematics, Ufa Federal Research Centre, RAS,
	\newline\indent 
	Chernyshevsky str. 112, 450008 Ufa, Russia
	\newline\indent
	Department of Mathematics and NTIS, Faculty of Applied Sciences,
	\newline\indent 
	University of West Bohemia, Univerzitn\'i 8, 301 00 Plze\v{n}, Czech Republic
}
\email{bobkov@matem.anrb.ru}
\address[P.~Dr\'abek]{\newline\indent
	Department of Mathematics and NTIS, Faculty of Applied Sciences,
	\newline\indent 
	University of West Bohemia, Univerzitn\'i 8, 301 00 Plze\v{n}, Czech Republic
}
\email{pdrabek@kma.zcu.cz}
\date{}
\subjclass[2010]{
	35P15,	
	34L15.	
}
\keywords{Szego-Weinberger inequality, Neumann eigenvalues, symmetries, nonradiality}
\begin{document}
	\begin{abstract}
		Let $\mu_2(\Omega)$ be the first positive eigenvalue of the Neumann Laplacian in a bounded domain $\Omega \subset \mathbb{R}^N$.
		It was proved by Szeg\H{o} for $N=2$ and by Weinberger for $N \geq 2$ that among all equimeasurable domains $\mu_2(\Omega)$ attains its global maximum if  $\Omega$ is a ball. 
		In the present work, we develop the approach of Weinberger in two directions. 
        Firstly, we refine the Szeg\H{o}-Weinberger result for a class of domains of the form $\Omega_{\text{out}} \setminus \overline{\Omega}_{\text{in}}$ which are either centrally symmetric or symmetric of order $2$ (with respect to every coordinate plane $(x_i,x_j)$)  by showing that $\mu_{2}(\Omega_{\text{out}} \setminus \overline{\Omega}_{\text{in}}) \leq \mu_2(B_\beta \setminus \overline{B}_\alpha)$, where $B_\al, B_\be$ are balls centered at the origin such that $B_\al \subset \Om_{\text{in}}$ and  $|\Om_{\text{out}}\setminus \overline{\Om}_{\text{in}}|=|B_\be\setminus  \overline{B}_\al|$. 
		Secondly, we provide Szeg\H{o}-Weinberger type inequalities for higher eigenvalues by imposing additional symmetry assumptions on the domain.
		Namely, if $\Omega_{\text{out}} \setminus \overline{\Omega}_{\text{in}}$ is symmetric of order $4$, then we prove $\mu_{i}(\Omega_{\text{out}} \setminus \overline{\Omega}_{\text{in}}) \leq \mu_i(B_\beta \setminus \overline{B}_\alpha)$ for $i=3,\dots,N+2$, where we also allow $\Om_{\text{in}}$ and  $B_\al$ to be empty.
		If $N=2$ and the domain is symmetric of order $8$, then the latter inequality persists for $i=5$.
		Counterexamples to the obtained inequalities for domains outside of the considered symmetry classes are given.
		The existence and properties of  nonradial domains with required symmetries in higher dimensions are discussed.
		As an auxiliary result, we obtain the non-radiality of the eigenfunctions associated to $\mu_{N+2}(B_\beta \setminus \overline{B}_\alpha)$.
	\end{abstract} 
	\maketitle

	\setcounter{tocdepth}{1}
	\tableofcontents
	
	\section{Introduction}\label{sec:introduction}

We consider the Neumann eigenvalue problem
\begin{equation}\label{eq:N}
	\tag{$\mathcal{EP}$}
	\left\{
	\begin{aligned}
		-\Delta u &= \mu u &&\text{in}~\Omega,\\
		\frac{\partial u}{\partial n} &= 0 &&\text{on}~\partial \Omega,
	\end{aligned}
	\right.
\end{equation}
where $\Omega$ obeys the following general assumption:
\begin{enumerate}[label={$\mathbf{(A_1)}$}]
	\item\label{assumption0}
	$\Omega 
	\subset \mathbb{R}^N$, $N \geq 2$, is a bounded domain with boundary $\partial \Omega$, such that the embedding $H^1(\Omega) \hookrightarrow L^2(\Omega)$ is compact. 
\end{enumerate}
As a model case, one can ask $\Omega$ to be smooth or Lipschitz.
Under the assumption \ref{assumption0}, the spectrum of \eqref{eq:N} consists of a discrete sequence of  eigenvalues
$$
0 = \mu_1(\Omega) < \mu_2(\Omega) \leq \mu_3(\Omega) \leq \dots 
$$

The Szeg\H{o}-Weinberger inequality states that
\begin{equation}\label{eq:SW}
	\mu_2(\Omega) \leq \mu_2(B),
\end{equation}
where $B$ is an open $N$-ball of the same Lebesgue measure as $\Omega$,
and if equality holds in \eqref{eq:SW}, then $\Omega=B$ up to a set of zero measure.
The inequality \eqref{eq:SW} was conjectured for $N=2$ by \textsc{Kornhauser \& Stakgold} \cite{KS} who established that the disk is a local maximizer of $\mu_2(\Omega)$. 
Later, \eqref{eq:SW} was obtained by \textsc{Szeg\H{o}} \cite{szego} for regular planar domains bounded by a simple closed curve (Jordan domains, for short).
Then, \textsc{Weinberger} \cite{weinberger} proved \eqref{eq:SW} in the general higher-dimensional case without any topological restrictions on $\Omega$. 
Qualitative versions of \eqref{eq:SW} were investigated by \textsc{Nadirashvili} \cite{nad1} in the case $N=2$ (for Jordan domains), and by \textsc{Brasco \& Pratelli} \cite{BP} in the general case $N \geq 2$, see also the survey \cite{BDP}.

Notice that $\mu_2(B)$ is explicitly given by
\begin{equation*}\label{eq:SWexp}
	\mu_2(B) = \left(\frac{\omega_N}{|\Omega|}\right)^\frac{2}{N} \left(p_{\frac{N}{2},1}^{(1)}\right)^2,
\end{equation*}
where $\omega_N$ is the volume of a unit $N$-ball, and, following the terminology of \cite{LS}, $p_{\nu,j}^{(l)}$ stands for the $j$-th positive zero of the function $(r^{1-\nu} J_{\nu+l-1}(r))'$,  $J_\nu$ being the Bessel function of the first kind of order $\nu$.

In the planar case $N=2$, various improvements of the Szeg\H{o}-Weinberger inequality are known 
under additional assumptions on the symmetry of $\Omega$. 
We say that a domain $\Omega \subset \mathbb{R}^2$ is \textit{symmetric of order $q \in \mathbb{N}$} if, after an appropriate translation, $\Omega$ is invariant under the rotation by angle $2\pi/q$.
It was proved by \textsc{Hersch} in \cite[Section 5.3]{hersh2} that for any Jordan domain $\Omega$ symmetric of order $q \geq 3$ the following inequality holds:
\begin{equation}\label{eq:SWsym}
	\mu_3(\Omega) \leq \mu_2(B) = \frac{\pi}{|\Omega|} \left(p_{1,1}^{(1)}\right)^2
	\approx \frac{10.6499}{|\Omega|}.
\end{equation}
In fact, to obtain \eqref{eq:SWsym} one can notice that $\mu_2(\Omega)=\mu_3(\Omega)$ for such class of domains, see  \cite[Lemma 4.1]{AB}. 
Moreover, \textsc{Ashbaugh \& Benguria} showed in \cite[Theorem 4.3]{AB} that \eqref{eq:SWsym} is satisfied for all bounded domains (regardless the topology) which are symmetric of order $4$.
We refer the reader to \textsc{Enache \& Philippin} \cite{EF1,EF2} for further inequalities involving $\mu_k(\Omega)$ for domains with symmetry of order $q \geq 2$.
\textsc{Hersch} in \cite[Section 5.4]{hersh2} also proved that for any Jordan domain $\Omega$ symmetric of order $4$ there holds
\begin{equation}\label{eq:SWsym4}
\mu_4(\Omega) \leq \mu_4(B) = \frac{\pi}{|\Omega|} \left(p_{1,1}^{(2)}\right)^2
\approx \frac{29.3059}{|\Omega|}.
\end{equation}
Let us mention that for a general bounded domain $\Omega \subset \mathbb{R}^N$ the inequality
\begin{equation}\label{eq:GNP}
	\mu_3(\Omega) \leq 2^\frac{2}{N}\mu_2(B)
\end{equation}
was established by 
\textsc{ Girouard,  Nadirashvili, \&  Polterovich} 
\cite{GNP} for $N=2$ (for Jordan domains), and by \textsc{Bucur \& Henrot} \cite{BH} for all $N\geq 2$.
Notice that if equality holds in \eqref{eq:GNP}, then $\Omega$ is a.e.\ a union of two disjoint equimeasurable balls.

\medskip
The main aim of the present work is to generalise the inequalities \eqref{eq:SW}, \eqref{eq:SWsym}, and \eqref{eq:SWsym4} in two directions: to the higher-dimensional case, and to domains with ``holes''.
Moreover, we present an inequality which generalises \eqref{eq:SW}, \eqref{eq:SWsym}, and \eqref{eq:SWsym4} to domains which are symmetric of order $8$.
First, let us introduce the following natural generalisation of the notion of symmetry of order $q$ to higher dimensions, cf.\ \cite[Section 4]{AB}.
\begin{definition}\label{def}
A domain $\Omega \subset \mathbb{R}^N$ is \textit{symmetric of order $q$}
if there exists an isometry $T$ 
such that
$R^{2\pi/q}_{i,j} T(\Omega) = T(\Omega)$ for any $1 \leq i < j \leq N$, where $R^{2\pi/q}_{i,j}$ denotes the rotation (in the anticlockwise direction with respect to the origin) by angle $2\pi/q$ in the coordinate plane $(x_i,x_j)$.
\end{definition}

We will also use the following slight variation on the classical notion of central symmetry.
\begin{definition}\label{def2}
A domain $\Omega \subset \mathbb{R}^N$ is \textit{centrally symmetric} if there exists a vector $\zeta \in \mathbb{R}^N$ such that $x \in \Omega+\zeta$ if and only if $-x \in \Omega+\zeta$.
\end{definition}

\begin{remark}
    Since the action of an isometry on $\Om$ does not change the value of $\mu_k(\Omega)$, from now on, except of Section \ref{sec:existence}, we will always assume that the isometry $T$ in Definition \ref{def} is the identity, and the translation $\zeta$ in Definition \ref{def2} is the zero vector.
	On the other hand, the presence of $T$ and $\zeta$ will be important in Section \ref{sec:existence} to show the relation between different symmetry classes.
\end{remark}

\begin{remark}
In the planar case $N=2$, the symmetry of order $2$ is equivalent to the central symmetry.
When $N\ge 4$ is an even dimension, the symmetry of order $2$ always implies the central symmetry, but not vice versa.
When $N \geq 3$ is an odd dimension, these two  notions are independent. 
We refer to Section \ref{subsec:2} for a detailed discussion.
\end{remark}

Let us now characterise a class of domains with ``holes'' by the following assumption:
\begin{enumerate}[label={$\mathbf{(A_2)}$}]
	\item\label{assumption}
	$\Omega = \Omega_{\text{out}} \setminus \overline{\Omega}_{\text{in}}$ is a domain in $\mathbb{R}^N$, where the domain $\Omega_{\text{in}}$ is compactly contained in the domain $\Omega_{\text{out}}$.
	If $\Om_{\text{in}}$ is nonempty, then we additionally assume $0 \in \Omega_{\text{in}}$.
\end{enumerate}

\begin{remark}
A domain $\Omega$ satisfying \ref{assumption} might possess other ``holes'' except $\Omega_{\text{in}}$, or might possess no ``holes'' at all, in which case $\Omega_{\text{in}} = \emptyset$.
The assumption $0 \in \Omega_{\text{in}}$ is imposed in order to guarantee that if $\Omega$ is symmetric of order $q$ or centrally symmetric, and $\Omega_{\text{in}}$ is nonempty, then $\Omega_{\text{in}}$ contains a ball centred at the origin.
\end{remark}
Throughout this paper, $B_\gamma$ will stand for the open ball of radius $\gamma > 0$ centred at the origin. For $\gamma=0$, we set $B_\gamma = \emptyset$.

\medskip
Our main result is the following theorem.
\begin{theorem}\label{thm1}
	Let $\Omega$ satisfy the assumptions \ref{assumption0} and \ref{assumption}. 
	Let $0 \leq \alpha < \beta$ be such that 
    $B_\alpha \subset \Omega_{\textnormal{in}}$ and $|\Omega| = |B_\beta \setminus \overline{B}_\alpha|$. 
    Then the following assertions hold:
    \begin{enumerate}[label={\rm(\roman*)}]
        \item If $\Omega$ is either symmetric of order $2$ or centrally symmetric, then
	\begin{equation}\label{eq:SWNN}
		\mu_{2}(\Omega) \leq \mu_2(B_\beta \setminus \overline{B}_\alpha).
	\end{equation}
	    \item If $\Omega$ is symmetric of order $4$, then
	\begin{equation}\label{eq:SWNNx}
		\mu_{i}(\Omega) \leq \mu_i(B_\beta \setminus \overline{B}_\alpha)=\mu_2(B_\beta \setminus \overline{B}_\alpha) \quad \text{for}~  i=3,\ldots, N+1,
	\end{equation}
	and
	\begin{equation}\label{eq:SWNNy}
		\mu_{N+2}(\Omega) \leq \mu_{N+2}(B_\beta \setminus \overline{B}_\alpha).
	\end{equation}
	    \item If $N=2$ and $\Omega$ is symmetric of order $8$, then
	\begin{equation}\label{eq:SWNNz}
	\mu_{5}(\Omega) \leq \mu_{4}(B_\beta \setminus \overline{B}_\alpha) =
	\mu_{5}(B_\beta \setminus \overline{B}_\alpha).
	\end{equation}
	\end{enumerate}
	
	If equality holds in \eqref{eq:SWNN}, \eqref{eq:SWNNx}, \eqref{eq:SWNNy}, \eqref{eq:SWNNz}, then $\Omega$ coincides a.e.\ with $B_\beta \setminus \overline{B}_\alpha$.
\end{theorem}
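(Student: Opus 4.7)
The plan is to follow Weinberger's strategy, adapted to the annular geometry and enriched with higher spherical-harmonic test functions to reach eigenvalues above $\mu_2$. The first step is to describe the spectrum of the reference annulus $B_\beta \setminus \overline{B}_\alpha$ by separation of variables in spherical coordinates: each eigenfunction takes the form $R_\ell(r)\, Y_\ell(\theta)$ with $Y_\ell$ a spherical harmonic of degree $\ell$ and $R_\ell$ solving a Bessel-type ODE with Neumann data at $r = \alpha, \beta$. The lowest positive eigenvalue $\mu^* := \mu_2(B_\beta \setminus \overline{B}_\alpha)$ belongs to the $\ell = 1$ sector, has multiplicity $N$, and admits the basis $G(|x|)\, x_i/|x|$, $i = 1, \dots, N$, which already justifies the chain $\mu_2 = \dots = \mu_{N+1}$ in (ii). The non-radiality assertion announced in the abstract is invoked to identify $\mu_{N+2}(B_\beta \setminus \overline{B}_\alpha)$ as the lowest $\ell = 2$ eigenvalue, realised by $H(|x|)\, P(x)/|x|^2$ for degree-$2$ harmonic polynomials $P$.

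Next, I would introduce the profiles $g, h$ on $[\alpha, \infty)$ obtained by extending $G, H$ by the constants $G(\beta), H(\beta)$ for $r > \beta$; no inward extension is needed because $B_\alpha \subset \Omega_{\textnormal{in}}$ forces $\Omega \cap B_\alpha = \emptyset$, and the measure hypothesis combined with this inclusion yields the key balance $|\Omega \setminus B_\beta| = |(B_\beta \setminus \overline{B}_\alpha) \setminus \Omega|$. The test functions are $\phi_i(x) := g(|x|)\, x_i/|x|$ for $i = 1, \dots, N$ and $\psi_P(x) := h(|x|)\, P(x)/|x|^2$ for the degree-$2$ polynomials $P$. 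Their orthogonality in $L^2(\Omega)$ to constants and to each other is extracted from the symmetries of $\Omega$: central or order-$2$ symmetry renders each $\phi_i$ odd under a suitable involution, so $\int_\Omega \phi_i = 0$; order-$4$ symmetry, via the rotation $R^{\pi/2}_{ij}$ (which sends $\phi_i$ to $\pm \phi_j$) and its square $R^\pi_{ij}$, yields $\int_\Omega \phi_j^2$ independent of $j$, $\int_\Omega \phi_i \phi_j = 0$ for $i \ne j$, $\int_\Omega \psi_{x_1 x_2} = 0$, and $\int_\Omega \psi_{x_1 x_2}\, \phi_k = 0$ for every $k$; in the planar order-$8$ case the rotation by $\pi/4$ mixes $\psi_{xy}$ and $\psi_{x^2 - y^2}$ and supplies the extra orthogonality needed to include a second degree-$2$ test function.

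The Courant--Fischer principle then yields $\mu_k(\Omega) \le \max_{u \in V \setminus \{0\}} R(u)$ with $R(u) = \int_\Omega |\nabla u|^2 / \int_\Omega u^2$, applied to $V = \operatorname{span}\{1, \phi_1, \dots, \phi_{k-1}\}$ for $2 \le k \le N+1$, to $V \oplus \operatorname{span}\{\psi_{x_1 x_2}\}$ for $k = N+2$, and, when $N = 2$, to $V \oplus \operatorname{span}\{\psi_{xy}, \psi_{x^2-y^2}\}$ for $k = 5$. The orthogonality relations collapse the max to the common Rayleigh quotient of the $\phi_i$ (respectively the $\psi_P$). A short computation gives the pointwise identities $\sum_i |\nabla \phi_i|^2 = g'(|x|)^2 + (N-1) g(|x|)^2/|x|^2 =: B(|x|)$ and $\sum_i \phi_i^2 = g(|x|)^2 =: A(|x|)$, so the common quotient equals $\int_\Omega B / \int_\Omega A$, with an analogous ratio governing the $\psi_P$ case. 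The concluding Weinberger-type comparison uses $g' \equiv 0$ outside $B_\beta$ to get $B(r) = (N-1) G(\beta)^2/r^2 \le B(\beta)$ for $r > \beta$, and a one-sided monotonicity property of $G$ on $[\alpha, \beta]$ (extracted from the ODE together with $G'(\alpha) = G'(\beta) = 0$) to get the opposite inequality for $A$; by the measure balance, transferring mass from $(B_\beta \setminus \overline{B}_\alpha) \setminus \Omega$ to $\Omega \setminus B_\beta$ then weakly decreases $\int B$ and weakly increases $\int A$, so that $\int_\Omega B / \int_\Omega A \le \int_{B_\beta \setminus \overline{B}_\alpha} B / \int_{B_\beta \setminus \overline{B}_\alpha} A = \mu^*$ by the eigenequation on the annulus.

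The main technical obstacle, in my view, is the analogous analysis for the degree-$2$ profile $h$ underlying the bounds on $\mu_{N+2}$ in (ii) and $\mu_5$ in (iii): the radial equation for $H$ carries the more singular centrifugal term $2N\, H/r^2$, so the sign and monotonicity properties needed to run the transfer-of-mass comparison require a sharper ODE analysis; moreover the non-radiality lemma is indispensable here, since without it $\mu_{N+2}(B_\beta \setminus \overline{B}_\alpha)$ could be a second radial eigenvalue against which a degree-$2$ test function provides no useful bound. The equality case then follows by tracing back: equality in $\mu_k(\Omega) = \mu_k(B_\beta \setminus \overline{B}_\alpha)$ forces equality in each step of the comparison, and the strict one-sided inequalities for $B$ and $A$ outside $[\alpha, \beta]$ then imply $|\Omega \setminus B_\beta| = |(B_\beta \setminus \overline{B}_\alpha) \setminus \Omega| = 0$, i.e.\ $\Omega = B_\beta \setminus \overline{B}_\alpha$ almost everywhere.
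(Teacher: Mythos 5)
Your overall architecture matches the paper's: separation of variables on the shell, radial profiles extended by constants past $r=\beta$, symmetry-driven orthogonality of the test functions, and Courant--Fischer. But there are two genuine gaps, both concentrated exactly where you locate "the main technical obstacle," and neither is closed by your outline.

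First, the comparison step. You run the classical Weinberger transfer-of-mass argument: the numerator density $B(r)$ should dominate its values outside $B_\beta$ by its values inside the shell, and the denominator density $A(r)=g^2$ the reverse, so that moving mass from $(B_\beta\setminus\overline{B}_\alpha)\setminus\Omega$ to $\Omega\setminus B_\beta$ decreases $\int B$ and increases $\int A$ \emph{separately}. This requires $B$ to be (essentially) non-increasing on $(\alpha,\beta)$. That holds for the degree-$1$ profile, but for the degree-$2$ profile $H(r)=(G_2'(r))^2+2NG_2^2(r)/r^2$ it fails in general (the paper observes this numerically in Remark \ref{rem:weinberger}), so the two separate inequalities you rely on are simply not available for the bounds on $\mu_{N+2}$ and $\mu_5$. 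The paper's fix is to abandon the two separate monotonicity statements and prove instead the single combined inequality
$H(\beta)-H(r)\le \mu_{l,1}\bigl(G_l^2(\beta)-G_l^2(r)\bigr)$ for $r\in(\alpha,\beta)$,
which after using the eigenvalue identity on the shell is exactly what the Rayleigh-quotient comparison needs; this inequality is equivalent to \eqref{eq:long} and is extracted from the ODE together with $v'>0$ and $v''(\beta)\le 0$ (Lemma \ref{lem:v}, Proposition \ref{prop:bound}). Your "sharper ODE analysis" is precisely this lemma, and without it the $l=2$ cases do not go through.

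Second, your trial space for $\mu_{N+2}$ adjoins only the single function $\psi_{x_1x_2}=G_2(r)x_1x_2/r^2$. Over a non-radial order-$4$ symmetric domain, the Rayleigh quotient of this one function does \emph{not} collapse to the radial ratio $\int_\Omega\bigl((G_2')^2+2NG_2^2/r^2\bigr)\,dx\big/\int_\Omega G_2^2\,dx$: the weight $x_1^2x_2^2/r^4$ is not proportional, after integration over $\Omega$, to the full measure (already for a square one checks $\int x_1^2x_2^2\,dx\ne\tfrac18\int r^4\,dx$), so Proposition \ref{prop:bound} gives no control on it. The paper instead takes the single combined function $w=\frac{G_2(r)}{r^2}\sum_m c_mP_m(x)$ over an orthogonal basis $\{P_m\}$ of $H_2$, normalized so that $\sum_m c_m^2P_m^2\equiv\frac{N-1}{N}r^4$ pointwise (identity \eqref{eq:alg4}); the cross terms vanish by the order-$4$ orthogonality relations (after a preliminary rotation of $\Omega$, needed because $\int_\Omega g(r)x_ix_j^3\,dx$ need not vanish for order-$4$ symmetry alone), and only then does the quotient of $w$ reduce to the radial ratio. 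Your order-$8$ planar case is closer to being correct, since there the extra symmetry forces $\int\frac{G_2^2}{r^4}(x_1^2-x_2^2)^2\,dx=4\int\frac{G_2^2}{r^4}x_1^2x_2^2\,dx$ and the two quotients individually equal the radial ratio; but for general $N$ and order $4$ the single-harmonic test function must be replaced by the full combination.
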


As a direct corollary of Theorem \ref{thm1}, we get the domain monotonicity of several higher Neumann eigenvalues on the class of equimeasurable spherical shells.
\begin{corollary}\label{cor:monotonicity}
    Let $0<\alpha_1 < \alpha$, $0<\beta_1<\beta$, and a ball $B$ be such that $|B_{\beta_1} \setminus \overline{B}_{\alpha_1}| = |B_\beta \setminus \overline{B}_\alpha| = |B|$. Then 
    \begin{equation*}\label{eq:muimui}
	\mu_i(B_\beta \setminus \overline{B}_\alpha) < \mu_i(B_{\beta_1} \setminus \overline{B}_{\alpha_1}) < \mu_i(B)
	\quad \text{for}~ i = 2,3,\dots,N+2,
	\end{equation*}
	and, in the case $N=2$, also
	$$
	\mu_5(B_\beta \setminus \overline{B}_\alpha) < \mu_5(B_{\beta_1} \setminus \overline{B}_{\alpha_1}) < \mu_5(B).
	$$
\end{corollary}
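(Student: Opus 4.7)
The plan is to prove both chains of strict inequalities by two direct applications of Theorem \ref{thm1} to spherical shells. A shell $B_r \setminus \overline{B}_s$ is invariant under every rotation, hence simultaneously centrally symmetric and symmetric of order $q$ for every $q \geq 2$, so cases (i)--(iii) of Theorem \ref{thm1} are all available and together cover every eigenvalue index $i$ listed in the corollary (case (i) for $i=2$, case (ii) for $i = 3,\dots,N+2$, and case (iii) for $i=5$ when $N=2$).

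For the first inequality $\mu_i(B_\beta \setminus \overline{B}_\alpha) < \mu_i(B_{\beta_1} \setminus \overline{B}_{\alpha_1})$, I would set $\Omega := B_\beta \setminus \overline{B}_\alpha$ with $\Omega_{\textnormal{in}} := B_\alpha$ and apply Theorem \ref{thm1} with the pair $(\alpha_1,\beta_1)$ playing the role of $(\alpha,\beta)$ in the statement of the theorem. The admissibility $B_{\alpha_1} \subset B_\alpha = \Omega_{\textnormal{in}}$ is immediate from the corollary's hypothesis $\alpha_1 < \alpha$, while the required equimeasurability $|\Omega| = |B_{\beta_1} \setminus \overline{B}_{\alpha_1}|$ is exactly one of the standing assumptions. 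The theorem's conclusion yields $\mu_i(B_\beta \setminus \overline{B}_\alpha) \leq \mu_i(B_{\beta_1} \setminus \overline{B}_{\alpha_1})$, and the equality clause would force the two shells to coincide up to a null set, which is impossible since their inner radii differ. Hence the inequality is strict.

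For the second inequality $\mu_i(B_{\beta_1} \setminus \overline{B}_{\alpha_1}) < \mu_i(B)$, I would apply Theorem \ref{thm1} a second time, this time to $\Omega := B_{\beta_1} \setminus \overline{B}_{\alpha_1}$ with the theorem's $\alpha$ chosen to be $0$ (so that $B_\alpha = \emptyset \subset B_{\alpha_1}$ is trivial) and the theorem's $\beta$ equal to the radius of $B$. Case (ii) explicitly permits $B_\alpha = \emptyset$, and in the other cases the value $\alpha = 0$ is directly admitted by the hypothesis $0 \leq \alpha < \beta$. Strictness again follows from the equality clause, since a shell with a nonempty hole cannot coincide a.e.\ with a full ball. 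There is no genuine obstacle in the argument; the only care needed is bookkeeping---keeping the $(\alpha,\beta)$ symbols of the corollary distinct from those used inside Theorem \ref{thm1}, and noting that the spherical symmetry of shells makes every invoked symmetry hypothesis automatic.
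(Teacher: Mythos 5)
Your proposal is correct and is precisely the argument the paper intends: the paper gives no separate proof, stating only that the monotonicity is a ``direct corollary'' of Theorem \ref{thm1}, and your two applications of the theorem to radially symmetric shells (which satisfy every symmetry hypothesis), together with the equality clause to get strictness, are exactly that deduction. The index bookkeeping (case (i) for $i=2$, case (ii) for $i=3,\dots,N+2$, case (iii) for $i=5$ when $N=2$) and the admissibility checks ($B_{\alpha_1}\subset B_\alpha$, resp.\ $B_0=\emptyset$) are all handled correctly.
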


\begin{remark}
    Corollary \ref{cor:monotonicity} shows that the inequalities given by Theorem \ref{thm1} provide the best upper bounds with respect to $\alpha$ if $B_\alpha$ is chosen to be the \textit{maximal} ball (centred at the origin) contained in $\Omega_{\text{in}}$.
    Moreover, the thinner the domain $\Omega$ (i.e., the closer $\alpha$ to $\beta$), the better these upper bounds compared to the estimates by $\mu_i(B)$.
\end{remark}  
    
\begin{remark}  
    If $\alpha>0$, then  \eqref{eq:SWNN} improves \eqref{eq:SW} for the class of symmetric domains described by the assumption \ref{assumption}. 
    (Although in the case $\alpha=0$, \eqref{eq:SWNN} is reduced to the classical Szeg\H{o}-Weinberger inequality \eqref{eq:SW} which holds regardless any symmetry assumptions on $\Omega$.)
	The inequalities \eqref{eq:SWNNx} and \eqref{eq:SWNNy} provide an improvement and a higher-dimensional generalisation of the inequalities \eqref{eq:SWsym} and \eqref{eq:SWsym4}, respectively.
	Moreover, \eqref{eq:SWNNx} improves \cite[Theorem 4.2]{AB}.
	The inequality for $i=3$ in \eqref{eq:SWNNx} also improves  \eqref{eq:GNP}.
  To the best of our knowledge, the inequality \eqref{eq:SWNNz} has not been explored in the literature before. 
\end{remark}

\begin{remark}\label{rem:nonexistence-symmetry}
	The following generalization of the inequality \eqref{eq:SWNNz} to higher dimensions ($N \geq 3$):
	\begin{equation}\label{eq:SWNNz1}
	\mu_{i}(\Omega) \leq \mu_{i}(B_\beta \setminus \overline{B}_\alpha) \quad \text{for} \quad  i=N+3, \dots, \frac{N(N+3)}{2},
	\end{equation}
	can be established for the class of domains which are symmetric of order $8$ with respect to \textit{some} coordinate plane and, simultaneously, symmetric of order $4$ with respect to all other coordinate planes.
	However, we show in Section \ref{sec:existence} that the class of such domains consists only of radially symmetric domains, i.e., of balls and spherical shells. 
	In fact, we show that the domains that are symmetric of order $q$ with  $q \neq 1,2,4$ must be radially symmetric.
	Thus,  the only nontrivial consequence of \eqref{eq:SWNNz1} would be the domain monotonicity of the corresponding higher Neumann eigenvalues in spherical shells as in Corollary \ref{cor:monotonicity}. 
	However, we anticipate that such monotonicity can be obtained by some other, perhaps easier, way. Because of that, we do not provide a proof of \eqref{eq:SWNNz1}.
\end{remark}

For proving Theorem \ref{thm1}, we adapt the original idea of \textsc{Weinberger} \cite{weinberger} to our settings. 
Namely, using certain eigenfunctions of \eqref{eq:N} on $B_\be\setminus \overline{B}_\al$, we  construct trial 
finite-dimensional subspaces of $H^1(\Omega)$
for the variational characterisation of $\mu_i(\Om)$ given by the Courant-Fischer minimax principle. 
These trial subspaces have the property that the maximum of the Rayleigh quotient over them does not exceed $\mu_i(B_\be\setminus \overline{B}_\al)$. 
In \cite{weinberger}, \textsc{Weinberger} produced such construction for an arbitrary domain $\Omega$ using an orthogonal basis of eigenfunctions corresponding to $\mu_2(B)$ in combination with their certain monotonicity properties. 
However, since we are considering higher eigenvalues $\mu_i(\Om)$ with $i\ge 2$ and allow the presence of the ``hole'' $\Omega_{\text{in}}$ in $\Omega$, it is  difficult to guarantee a
similar construction of trial subspaces without  additional assumptions on $\Omega$. 
Counterexamples which we provide in Section \ref{sec:counterexample} demonstrate that symmetry requirements of Theorem \ref{thm1} might be vital.
Moreover, in general, \textsc{Weinberger}'s  argument for the monotonicity works only for the second eigenfunctions, see Remark \ref{rem:weinberger}. 
In order to deal with higher eigenvalues, we provide a more universal argument which also covers the case of domains with ''holes'', see Proposition \ref{prop:bound}.
In fact, the assumption that $B_\alpha$ must be contained in $\Omega_{\text{in}}$ appears only in this proposition.

The structure of this work is as follows. 
Section \ref{sec:spectrum} contains some preliminaries on the structure and properties of the spectra $\{\mu_k(B_\beta \setminus \overline{B}_\alpha)\}$, several results being proved in Appendix \ref{sec:appendix2}.
Section \ref{sec:proof1} is devoted to the proof of Theorem \ref{thm1}.
In Section \ref{sec:counterexample}, we discuss the violation of the obtained inequalities for domains outside of the considered symmetry classes.
In Section \ref{sec:existence}, we discuss the existence and properties of nonradial domains satisfying the symmetry assumptions imposed in Theorem \ref{thm1} and Remark \ref{rem:nonexistence-symmetry}.
Section \ref{sec:final} contains some concluding remarks.
Finally, Appendix \ref{sec:appendix} contains several auxiliary results needed for the proof of the main theorem.

\section{Spectrum of \texorpdfstring{\eqref{eq:N}}{(EP)} on radially symmetric domains}\label{sec:spectrum}
In this section, we provide several results on the structure and properties of eigenvalues and eigenfunctions of the problem \eqref{eq:N} in the spherical shell $B_\beta \setminus \overline{B}_\alpha$, where $0 \leq \alpha<\beta<\infty$. 
Recall that in the case $\alpha=0$, we set $B_\beta \setminus \overline{B}_\alpha \equiv B_\beta$.
Hereinafter, we will denote $\mathbb{N}_0 = \{0,1,2,\dots\}$ and $\mathbb{N} = \{1,2,\dots\}$. 

In the spherical coordinates $(r,\omega) \in (0,+\infty) \times S^{N-1}$, the Laplacian acts on a smooth function $u=u(x)=u(r,\omega)$ as
\begin{equation*}\label{Spherical}
    \Delta u = \frac{\partial^2 u}{\partial r^2} + \frac{N-1}{r} \frac{\partial u}{\partial r} + \frac{1}{r^2} \Delta_{S^{N-1}} u,
\end{equation*}
where $r = |x|$ and $\Delta_{S^{N-1}}$ is the Laplace-Beltrami operator on the unit sphere $S^{N-1}$.
For a smooth function $h$ on $S^{N-1}$, the action of $\Delta_{S^{N-1}}$ is given as
$$
\Delta_{S^{N-1}}h
=
\left.\Delta h\left(\frac{x}{|x|}\right)\right|_{|x|=1}.
$$
It is well-known that the set of all eigenfunctions of $\Delta_{S^{N-1}}$ is precisely the set of spherical harmonics, which are defined as the restriction to $S^{N-1}$ of homogeneous harmonic polynomials in $N$ variables. 
Denote by $H_l$ the set of all homogeneous harmonic polynomials in $N$ variables and of degree $l \in \mathbb{N}_0$.
Clearly, $H_0$ consists only of constant functions.
In Appendix \ref{sec:Orthogonal}, we discuss the form of orthogonal bases of $H_1$ and $H_2$, which will be important in the proof of Theorem \ref{thm1}.
We will need the following result on the spectrum of $\Delta_{S^{N-1}}$, see, e.g., \cite[Sections 22.3, 22.4]{shubin}.

\begin{proposition}\label{prop:multiplicity}
The spectrum of $\Delta_{S^{N-1}}$ is the set $\left\{-l(l+N-2):\, l\in \mathbb{N}_0 \right\}$. 
The multiplicity of the eigenvalue $-l(l+N-2)$ is equal to the dimension of $H_l$ and it is given by
\begin{equation*}\label{multiplicity}
   \Lambda_l 
   =
   \mathrm{dim} H_l
   =
   \binom{l+N-1}{N-1} - \binom{l+N-3}{N-1}
   =
   \frac{2l + N-2}{l+N-2}
   \binom{l+N-2}{l}.
\end{equation*}
\end{proposition}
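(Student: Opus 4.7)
The plan is to identify each eigenspace of $\Delta_{S^{N-1}}$ with the restriction of $H_l$ to the sphere and then to count dimensions. The starting point is to write any $P \in H_l$ in spherical coordinates as $P(x) = r^l Y(\omega)$ with $Y = P|_{S^{N-1}}$. Substituting into the spherical decomposition of $\Delta$ already displayed in the paper gives
\[
0 = \Delta P = r^{l-2}\bigl(l(l-1) Y + (N-1) l Y + \Delta_{S^{N-1}} Y\bigr) = r^{l-2} \bigl(l(l+N-2)\, Y + \Delta_{S^{N-1}} Y \bigr),
\]
so $\Delta_{S^{N-1}} Y = -l(l+N-2) Y$. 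Homogeneity makes the restriction map $H_l \hookrightarrow L^2(S^{N-1})$ injective, so $H_l$ embeds into the eigenspace attached to the eigenvalue $-l(l+N-2)$.

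To see that these exhaust the spectrum and that the embedding is onto each full eigenspace, I would invoke the classical Fischer (Gauss) decomposition: every homogeneous polynomial of degree $m$ on $\mathbb{R}^N$ can be written uniquely as $\sum_{0 \leq 2k \leq m} |x|^{2k} Q_{m-2k}$ with $Q_j \in H_j$. On $S^{N-1}$ the factor $|x|^{2k}$ is trivial, so the restrictions $\bigcup_l H_l|_{S^{N-1}}$ span the algebra of polynomials on $S^{N-1}$, which is dense in $L^2(S^{N-1})$ by Stone--Weierstrass. Since $\Delta_{S^{N-1}}$ is self-adjoint with compact resolvent on the closed manifold $S^{N-1}$, its eigenspaces are pairwise orthogonal and complete; combined with the previous step this forces the spectrum to be exactly $\{-l(l+N-2):l \in \mathbb{N}_0\}$ and the $l$-th eigenspace to coincide with $H_l|_{S^{N-1}}$.

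It remains to compute $\Lambda_l = \dim H_l$. Writing $\mathcal{P}_m$ for the space of homogeneous polynomials of degree $m$ on $\mathbb{R}^N$, the key input is the surjectivity of $\Delta : \mathcal{P}_l \to \mathcal{P}_{l-2}$. This follows from the Fischer inner product $\langle P, Q \rangle := (P(\partial)Q)(0)$ on $\mathcal{P}_l$, with respect to which multiplication by $|x|^2$ is the adjoint of $\Delta$; injectivity of the former (which is obvious) yields surjectivity of the latter. Hence
\[
\dim H_l = \dim \mathcal{P}_l - \dim \mathcal{P}_{l-2} = \binom{l+N-1}{N-1} - \binom{l+N-3}{N-1},
\]
and the closed form in the statement follows by a direct binomial simplification (with the convention that $\binom{n}{k}=0$ whenever $n<k$, which handles $l=0,1$).

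I expect the only non-routine step to be the surjectivity of $\Delta$ on the polynomial spaces; once the Fischer pairing argument is in place, everything else is either a one-line substitution, a density statement, or an arithmetic identity.
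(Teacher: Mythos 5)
Your argument is correct, but it cannot be compared with a proof in the paper because the paper offers none: Proposition \ref{prop:multiplicity} is stated as a known fact with a pointer to Shubin's book (Sections 22.3--22.4), and the authors never reprove it. What you have written is the standard self-contained textbook proof --- essentially the one in Axler--Bourdon--Ramey (a reference the paper already cites for the material on $H_1$ and $H_2$ in Appendix B): the substitution $P=r^lY$ into the spherical form of $\Delta$ identifies $H_l|_{S^{N-1}}$ inside the eigenspace of $-l(l+N-2)$; the Fischer (Gauss) decomposition plus Stone--Weierstrass gives density of $\bigcup_l H_l|_{S^{N-1}}$ in $L^2(S^{N-1})$, which rules out further spectrum; and the Fischer pairing makes multiplication by $|x|^2$ adjoint to $\Delta$ on polynomials, yielding surjectivity of $\Delta:\mathcal{P}_l\to\mathcal{P}_{l-2}$ and hence $\dim H_l=\dim\mathcal{P}_l-\dim\mathcal{P}_{l-2}$. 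All of these steps are sound. Two small points you should make explicit to fully close the argument: (i) to conclude that the eigenspace of $-l(l+N-2)$ is \emph{exactly} $H_l|_{S^{N-1}}$ (and not a sum of several $H_{l'}|_{S^{N-1}}$), you need that $l\mapsto l(l+N-2)$ is injective on $\mathbb{N}_0$, which holds since it is strictly increasing for $N\geq 2$; and (ii) the closed form $\frac{2l+N-2}{l+N-2}\binom{l+N-2}{l}$ is to be read with the usual limiting convention in the degenerate case $N=2$, $l=0$, where the prefactor is formally $0/0$. Neither affects the validity of your proof.
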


By separating the variables,
one can find a complete orthogonal system (in fact, a
basis) in $L^2(B_\beta \setminus \overline{B}_\alpha)$ of eigenfunctions of \eqref{eq:N} on $B_\beta \setminus \overline{B}_\alpha$ in the form
\begin{equation*}\label{eq:eigenf}
\varphi(x) 
=
v(|x|) h\left(\frac{x}{|x|}\right), 
\quad x \in \left(B_\beta \setminus \overline{B}_\alpha\right) \setminus \{0\},
\end{equation*}
cf.\ \cite[Chapter II, \S 1.6]{courant}.
Here, $h$ is a spherical harmonic
corresponding to the eigenvalue $-l(l+N-2)$ of $\Delta_{S^{N-1}}$, and $v$ is an eigenfunction of the Sturm-Liouville eigenvalue problem (\textit{SL problem}, for short)
\begin{equation}\label{eq:ode}
-v'' - \frac{N-1}{r} v' + \frac{l(l+N-2)}{r^2} v = \mu v, 
\quad r \in (\alpha,\beta),
\end{equation}
with the boundary conditions
\begin{equation}\label{eq:bcN}
v'(\alpha)=0 \quad \text{and} \quad v'(\beta)=0.
\end{equation}
By the standard Sturm-Liouville theory, for every $l \in \mathbb{N}_0$ the spectrum of the SL problem \eqref{eq:ode}, \eqref{eq:bcN} consists of a sequence of eigenvalues
\begin{equation}\label{Neumann evs}
 (0 \le )~ \mu_{l,1} < \mu_{l,2} < \dots < \mu_{l,k}\to +\infty \quad \text{as}~ k \to +\infty. 
\end{equation}
Each eigenvalue $\mu_{l,j}$ is simple and the associated eigenfunction vanishes exactly $j-1$ times in $(\al,\be)$. 
In particular, any first eigenfunction has a constant sign in $(\al,\be)$. 
Moreover, $\mu_{0,1}=0$ and the associated eigenfunction is a nonzero constant.

The spectrum of the problem \eqref{eq:N} on $B_\beta \setminus \overline{B}_\alpha$ is given by 
\begin{equation}\label{eq:charmu}
\{\mu_k(B_\beta \setminus \overline{B}_\alpha)\}_{k \in \mathbb{N}} = 
\{\mu_{l,j}\}_{l \in \mathbb{N}_0, j \in \mathbb{N}},
\end{equation}
where each $\mu_{l,j}$ is counted with multiplicity $\Lambda_l$ (the dimension of $H_l$).
In particular, if $\mu_k(B_\beta \setminus \overline{B}_\alpha) = \mu_{l,j}$ for some $k,l,j$, then the multiplicity of $\mu_k(B_\beta \setminus \overline{B}_\alpha)$ is at least $\Lambda_l$.
If $\alpha=0$, that is, in the case of the ball, then the multiplicity of such $\mu_k(B_\beta \setminus \overline{B}_\alpha)$ is exactly $\Lambda_l$, see \cite[Proposition 2.3]{HS}.
However, if $\alpha>0$, then it might happen that $\mu_{l_1,j_1} = \mu_{l_2,j_2}$ for two different pairs of indices.
In this case, the multiplicity of the corresponding eigenvalue of \eqref{eq:N} is at least $\Lambda_{l_1} + \Lambda_{l_2}$.
Notice also that taking $l=0$ we obtain all the radial eigenvalues of \eqref{eq:N}.

Since the equation \eqref{eq:ode} can be rewritten as
\begin{equation}\label{eq:ode3}
-(r^{N-1}v')' + l(l+N-2) r^{N-3} v = \mu r^{N-1} v, 
\quad r \in (\alpha,\beta),
\end{equation}
it is possible to characterise any eigenvalue $\mu_{l,j}$ as a critical value of the Rayleigh quotient
\begin{equation}\label{eq:RQ}
R_l(v)=\frac{\int_\al^\be \left[(v'(r))^2+\frac{l(l+N-2)}{r^2}v^2(r)\right]r^{N-1}\,dr}{\int_\al^\be v^2(r)r^{N-1}\,dr}, 
\quad 
v\in H^1((\al,\be);r^{N-1})\setminus\{0\},
\end{equation}
where $H^1((\al,\be);r^{N-1})$ is the weighted Sobolev space on $(\alpha,\beta)$ with the weight $r^{N-1}$. 
More precisely, using the Courant-Fischer minimax formula, we have
\begin{equation}\label{eq:mudef}
\mu_{l,j}=\min_{X\in \mathcal{X}_j}\max_{u\in X\setminus \{0\}}R_l(u),
\end{equation}
where $\mathcal{X}_j$ is the collection of all $j$-dimensional subspaces of $H^1((\al,\be);r^{N-1})$.
In particular,
\begin{equation}\label{eq:mudef1}
\mu_{l,1}=\min_{u \in H^1((\al,\be);r^{N-1}) \setminus \{0\}} R_l(u).
\end{equation}
Since $R_l(v)$ is strictly increasing with respect to $l$, we deduce from \eqref{eq:mudef} that for each fixed $j\in \mathbb{N}$, 
\begin{equation}\label{eqn:col}
    \mu_{0,j}
    < 
    \mu_{1,j}
    < 
    \ldots 
    < 
    \mu_{l,j} 
    < \ldots
\end{equation}

Note that the general solution of the equation \eqref{eq:ode} is given by
\begin{equation*}\label{eq:generalsolution}
v(r) 
= r^{1-\frac{N}{2}} 
\left[ 
c_1 J_{\frac{N}{2}+l-1}\left(\sqrt{\mu} r\right) 
+ c_2Y_{\frac{N}{2}+l-1}\left(\sqrt{\mu} r\right)
\right],
\end{equation*}
where $J_s$ and $Y_s$ are the  Bessel functions of order $s$, of the first and second kind, respectively. 
In the case $\alpha=0$, $c_2=0$ in view of the singularity of $Y_{\frac{N}{2}+l-1}$ at zero, and one can characterize each eigenvalue $\mu_{l,j}$ as $\mu_{l,j} = \beta^{-2} \left(p_{\frac{N}{2},j}^{(l)}\right)^2$ (see Section \ref{sec:introduction} for the definition of $p_{\frac{N}{2},j}^{(l)}$).
In the case $\alpha>0$, the constants $c_1$ and $c_2$ are determined through the boundary conditions \eqref{eq:bcN}, and $\mu_{l,j}$ can be characterised as 
the $j$-th positive zero of the following cross-product of Bessel functions:
\begin{align*}
 F(\mu) 
 := 
 &\left(\frac{2-N}{2} J_{\nu}\left(\sqrt{\mu} \alpha\right)
 + \alpha \sqrt{\mu} J'_{\nu}\left(\sqrt{\mu} \alpha\right) \right)
 \left(\frac{2-N}{2} Y_{\nu}\left(\sqrt{\mu} \beta\right)
 + \beta \sqrt{\mu} Y'_{\nu}\left(\sqrt{\mu} \beta\right) \right)
 \\
 &-
 \left(\frac{2-N}{2} Y_{\nu}\left(\sqrt{\mu} \alpha\right)
 + \alpha \sqrt{\mu} Y'_{\nu}\left(\sqrt{\mu} \alpha\right) \right)
 \left(\frac{2-N}{2} J_{\nu}\left(\sqrt{\mu} \beta\right)
 + \beta \sqrt{\mu} J'_{\nu}\left(\sqrt{\mu} \beta\right) \right),
\end{align*}
where $\nu = \frac{N}{2}+l-1$.

\begin{remark}
In the case $\alpha>0$, we have 
\begin{equation}\label{eq:muk-conv}
 \mu_k(B_\beta \setminus \overline{B}_{\alpha}) 
 \to 
 \mu_k(B_\beta)
 \quad \text{as}~ \alpha \to 0,
 \end{equation}
 for any $k \in \mathbb{N}_0$, see e.g., \cite[Theorem 3.5 and Corollary 3.6]{daners}. 
 In particular, if we temporarily denote $\mu_{l,j} = \mu_{l,j}(\alpha)$ to stress the dependence on $\alpha$, then the convergence \eqref{eq:muk-conv} together with the characterisation \eqref{eq:charmu} 
 and the fact that $\mu_{l_1,j_1}(0) \neq \mu_{l_2,j_2}(0)$ provided $(l_1,j_1) \neq (l_2,j_2)$ (see \cite[Lemma 2.5]{HS})
 yield
\begin{equation}\label{eq:mulj-conv}
 \mu_{l,j}(\alpha) 
 \to 
 \mu_{l,j}(0)
 \quad \text{as}~ \alpha \to 0,
 \end{equation}
 for any $l \in \mathbb{N}_0$ and $j \in \mathbb{N}$.
\end{remark}
 
From \eqref{Neumann evs} and \eqref{eqn:col} we see  that the entries of the infinite matrix $\{\mu_{l,j}\}$ are increasing along   the rows and columns. 
Since the eigenvalues of \eqref{eq:N} are counted in the nondecreasing order, the first and second eigenvalues of \eqref{eq:N} on $B_\beta \setminus \overline{B}_\alpha$ must be
$$
\mu_1(B_\beta \setminus \overline{B}_\alpha)=\mu_{0,1} = 0
\quad \text{and} \quad 
\mu_2(B_\beta \setminus \overline{B}_\alpha)=\min\{\mu_{1,1},\mu_{0,2}\}.
$$ 
In the following lemma, we provide a precise ordering of the eigenvalues $\mu_{1,1}$, $\mu_{2,1}$, and $\mu_{0,2}$, which gives, in particular, that $\mu_2(B_\beta \setminus \overline{B}_\alpha) = \mu_{1,1}$.
\begin{lemma}\label{lem:n+2}
We have 
\begin{equation}\label{eq:mu11<mu21<mu02}
\mu_{1,1}<\mu_{2,1}<\mu_{0,2}.
\end{equation}
\end{lemma}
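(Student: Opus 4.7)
The lemma consists of two strict inequalities. The first, $\mu_{1,1} < \mu_{2,1}$, is immediate: the Rayleigh quotient $R_l$ of \eqref{eq:RQ} is strictly increasing in $l$ for every fixed nonzero test function, so specialising the chain \eqref{eqn:col} to $j=1$ yields $\mu_{0,1}<\mu_{1,1}<\mu_{2,1}$ and the claim follows at once with no further work.

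For the substantive inequality $\mu_{2,1}<\mu_{0,2}$, my plan is to apply the min-principle \eqref{eq:mudef1} with the explicit trial function $v_\ast(r)=r$. A direct integration gives
\begin{equation*}
R_2(r) \;=\; \frac{(2N+1)(N+2)(\beta^N-\alpha^N)}{N(\beta^{N+2}-\alpha^{N+2})}.
\end{equation*}
The proof then reduces to establishing the pointwise bound $R_2(r)<\mu_{0,2}(\alpha,\beta)$, which I would verify in two stages. At $\alpha=0$, using $\mu_{0,2}(0,\beta)=(p^{(0)}_{N/2,2})^2/\beta^2$, the bound becomes the Bessel-zero inequality $(p^{(0)}_{N/2,2})^2>(2N+1)(N+2)/N$, verifiable via tabulated values for small $N$ and the McMahon asymptotic $p^{(0)}_{N/2,2}\sim N/2+O(N^{1/3})$ for large $N$. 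For $\alpha\in(0,\beta)$, a short calculus argument shows that $R_2(r)$ is strictly decreasing in $\alpha$ (set $t=\alpha/\beta\in[0,1]$: the numerator of the derivative factors as $\alpha^{N-1}\bigl[-N+(N+2)t^2-2t^{N+2}\bigr]\beta^{N+2}$, and the bracketed function is easily seen to be strictly negative on $(0,1)$). On the other hand, a Hadamard-type variational identity, exploiting the Neumann boundary condition $v'(\alpha)=0$ and the radiality of the $\mu_{0,2}$-eigenfunction $v$, produces $d\mu_{0,2}/d\alpha = \mu_{0,2}\, v(\alpha)^2 \cdot N\omega_N \alpha^{N-1} > 0$. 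Chaining these two monotonicities yields
\begin{equation*}
\mu_{2,1}(\alpha,\beta)\;\leq\; R_2(r) \;\leq\; R_2(r)\big|_{\alpha=0} \;<\; \mu_{0,2}(0,\beta) \;\leq\; \mu_{0,2}(\alpha,\beta),
\end{equation*}
as required.

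\emph{Main obstacle.} The delicate technical step is the Bessel-zero inequality at $\alpha=0$: each dimension is numerically transparent, but a uniform-in-$N$ proof demands combining the McMahon expansion with a finite case check. A fallback strategy is to split by dimension and use $v_\ast\equiv 1$ for $N\geq 3$, which yields the tidier Rayleigh quotient $R_2(1)=2N^2(\beta^{N-2}-\alpha^{N-2})/((N-2)(\beta^N-\alpha^N))$ and sidesteps the singularity of $1/r^2$ against the weight $r^{N-1}$ in the ball case, while retaining $v_\ast(r)=r$ for the planar case $N=2$, where the explicit value $R_2(r)=10/(\alpha^2+\beta^2)$ falls below the known $\mu_{0,2}(B_\beta)\approx 14.68/\beta^2$ at $\alpha=0$ and the same $\alpha$-monotonicity handles $\alpha>0$.
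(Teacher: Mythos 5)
Your argument is correct in substance, and while it shares the paper's overall architecture for the second inequality -- settle the case $\alpha=0$ first, then propagate to $\alpha>0$ by monotonicity in $\alpha$ -- the individual steps are genuinely different. For the upper bound on $\mu_{2,1}$ you insert the explicit trial function $v_\ast(r)=r$ into \eqref{eq:mudef1} and track the resulting rational expression in $\alpha$; the paper instead differentiates the eigenvalue itself in $\alpha$ via the Kong--Zettl endpoint formula, which forces it to invoke Lemma \ref{lem:v} to get the sign of $v''(\alpha)$. Your route avoids both of those ingredients, which is a real simplification on that side. For the lower bound on $\mu_{0,2}$ you use the Hadamard endpoint derivative $\partial\mu_{0,2}/\partial\alpha=\mu_{0,2}v(\alpha)^2\alpha^{N-1}>0$ (your formula is correct up to normalisation; do note that it needs $v(\alpha)\neq 0$, which follows from $v'(\alpha)=0$ and ODE uniqueness), whereas the paper routes through the identity $\mu_{0,2}=\lambda_{1,1}$ of Proposition \ref{prop:mu2tau2} and Dirichlet domain monotonicity -- both work. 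The price you pay is at $\alpha=0$: your comparison $(2N+1)(N+2)/N<\mu_{0,2}(0)\beta^2$ genuinely requires a finite case check for small $N$ plus asymptotics, while the paper's comparison of the two literature bounds $p^{(2)}_{N/2,1}<\sqrt{2N+8}$ and $j_{N/2,2}>N/2+(3\pi+1)/2$ reduces to a single quadratic inequality valid uniformly in $N\geq 2$. (If you want to close your gap cleanly, the Rayleigh-sum bound $j_{\nu,1}^4>16(\nu+1)^2(\nu+2)$ gives $j_{N/2,1}^2>2(N+2)\sqrt{(N+4)/2}$, and the required inequality then reduces to $\sqrt{2(N+4)}>2+1/N$, which holds for all $N\geq 2$ with no case-splitting.) Your computations of $R_2(v_\ast)$, of the sign of the bracket $-N+(N+2)t^2-2t^{N+2}$, and of the planar value $10/(\alpha^2+\beta^2)$ all check out; you should also say a word about continuity of $\mu_{0,2}(\alpha)$ as $\alpha\to 0$ to justify the final link of your chain, which the paper supplies in \eqref{eq:mulj-conv}.
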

The proof of Lemma \ref{lem:n+2} is placed in Appendix \ref{sec:appendix2}.
Notice that the weaker inequality $\mu_{1,1}<\mu_{0,2}$ can be obtained from \cite[Theorems 1.4]{AK} or \cite[Theorem 1.2]{Li}, see also Proposition \ref{prop:mu2tau2} for a simple proof. However,  \eqref{eq:mu11<mu21<mu02} cannot be improved, in general, to $\mu_{3,1} < \mu_{0,2}$. 
Indeed, in the planar case $N=2$ with $\alpha=0$ and $\beta=1$, one has $\mu_{3,1} \approx 17.65$, while $\mu_{0,2} \approx 14.68$. 
Hence $\mu_{3,1} > \mu_{0,2}$ holds for all sufficiently small $\alpha \geq 0$ in view of the convergence \eqref{eq:mulj-conv}.

Thanks to Proposition \ref{prop:multiplicity}, the following corollary of Lemma \ref{lem:n+2} can be easily derived.
\begin{corollary}\label{cor:multiplicity}
For any $0 \leq \al < \be,$ we have
\begin{align}
\notag
\mu_2(B_\beta \setminus \overline{B}_\alpha)
&=
\dots
=
\mu_{N+1}(B_\beta \setminus \overline{B}_\alpha)
= 
\mu_{1,1},\\
\label{eq:multip2}
\mu_{N+2}(B_\beta \setminus \overline{B}_\alpha)
&=
\dots
=
\mu_{\frac{N(N+3)}{2}}(B_\beta \setminus \overline{B}_\alpha)  =
\mu_{2,1}.
\end{align}
\end{corollary}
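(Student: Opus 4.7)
The strategy is purely combinatorial: by \eqref{eq:charmu}, the multiset of eigenvalues $\{\mu_k(B_\beta\setminus \overline{B}_\alpha)\}_{k\in\mathbb{N}}$ coincides with $\{\mu_{l,j}\}_{l\in\mathbb{N}_0,\,j\in\mathbb{N}}$, where each $\mu_{l,j}$ is repeated $\Lambda_l$ times. Hence I just need to sort the entries of the infinite matrix $\{\mu_{l,j}\}$ and locate the positions occupied by $\mu_{1,1}$ and $\mu_{2,1}$. From Proposition~\ref{prop:multiplicity}, $\Lambda_0=1$, $\Lambda_1 = N$, and a direct simplification gives $\Lambda_2 = \frac{(N-1)(N+2)}{2}$; so the cumulative counts $1$, $1+N$, and $1+N+\Lambda_2 = \frac{N(N+3)}{2}$ match exactly the indices appearing in the statement.

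The first step is to show that the only entry of $\{\mu_{l,j}\}$ strictly below $\mu_{1,1}$ is $\mu_{0,1}=0$: indeed, column monotonicity \eqref{eqn:col} gives $\mu_{l,1}\ge \mu_{1,1}$ for all $l\ge 1$, row monotonicity \eqref{Neumann evs} gives $\mu_{l,j}\ge \mu_{l,1}$, and by Lemma~\ref{lem:n+2} we have $\mu_{0,2}>\mu_{2,1}>\mu_{1,1}$, so $\mu_{0,j}\ge \mu_{0,2}>\mu_{1,1}$ for $j\ge 2$. This means the first $1+\Lambda_1=N+1$ eigenvalues of \eqref{eq:N} (counted with multiplicity) are exhausted by $\mu_{0,1}$ (once) followed by $\mu_{1,1}$ repeated $N$ times, yielding $\mu_2 = \dots = \mu_{N+1} = \mu_{1,1}$.

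The second step is the analogous case analysis for the entries lying strictly between $\mu_{1,1}$ and $\mu_{2,1}$: I claim there are none. For $l=0$, $j\ge 2$, Lemma~\ref{lem:n+2} gives $\mu_{0,j}\ge\mu_{0,2}>\mu_{2,1}$. For $l=1$, $j\ge 2$, row monotonicity and \eqref{eqn:col} give $\mu_{1,j}>\mu_{1,1}$; more precisely, $\mu_{1,j}\ge \mu_{1,2}>\mu_{0,2}>\mu_{2,1}$ by combining \eqref{eqn:col} with Lemma~\ref{lem:n+2}. For $l\ge 3$, column monotonicity \eqref{eqn:col} gives $\mu_{l,j}\ge \mu_{l,1}>\mu_{2,1}$, and for $l=2$, $j\ge 2$ row monotonicity yields $\mu_{2,j}>\mu_{2,1}$. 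Hence the only entries equal to $\mu_{2,1}$ are the $\Lambda_2$ copies of $\mu_{2,1}$ itself, which occupy positions $N+2$ through $N+1+\Lambda_2=\frac{N(N+3)}{2}$, giving \eqref{eq:multip2}.

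No step should present a real obstacle since everything reduces to bookkeeping once Lemma~\ref{lem:n+2} and Proposition~\ref{prop:multiplicity} are available; the only mildly delicate point is the estimate $\mu_{1,2}>\mu_{2,1}$, which needs the sharper inequality $\mu_{2,1}<\mu_{0,2}$ from Lemma~\ref{lem:n+2} rather than the weaker $\mu_{1,1}<\mu_{0,2}$, and the arithmetic identity $1+N+\frac{(N-1)(N+2)}{2}=\frac{N(N+3)}{2}$.
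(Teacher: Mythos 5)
Your proof is correct and is precisely the bookkeeping argument the paper intends: the paper states the corollary without proof ("can be easily derived" from Lemma~\ref{lem:n+2} and Proposition~\ref{prop:multiplicity}), and your sorting of the matrix $\{\mu_{l,j}\}$ via the row/column monotonicities \eqref{Neumann evs}, \eqref{eqn:col} together with the full strength of $\mu_{1,1}<\mu_{2,1}<\mu_{0,2}$ is exactly the missing verification, including the index count $1+N+\Lambda_2=\frac{N(N+3)}{2}$ noted in Remark~\ref{rem:multip}.
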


\begin{remark}\label{rem:radial}
By Corollary \ref{cor:multiplicity} and the second inequality in \eqref{eq:mu11<mu21<mu02}, any eigenfunction $\varphi_k$ corresponding to the eigenvalue $\mu_k(B_\beta \setminus \overline{B}_\alpha)$ with $k=2,\ldots,N+1$
has the form
\begin{equation*}\label{eq:eigenf2}
\varphi_k(x) 
=
v(|x|) h\left(\frac{x}{|x|}\right)
=
\frac{v(r)}{r} h(x), 
\quad x \in \left(B_\beta \setminus \overline{B}_\alpha\right) \setminus \{0\},
\end{equation*}
where $h \in H_1$, and if $k=N+2,\ldots, \frac{N(N+3)}{2}$, then $\varphi_k$ has the form
\begin{equation*}\label{eq:eigenf22}
	\varphi_k(x) 
	=
	v(|x|) h\left(\frac{x}{|x|}\right)
	=
	\frac{v(r)}{r^2} h(x), 
	\quad x \in \left(B_\beta \setminus \overline{B}_\alpha\right) \setminus \{0\},
\end{equation*}
where $h \in H_2$.
In particular, any $\varphi_k$ with $k=2,\ldots, \frac{N(N+3)}{2}$
is nonradial, and it is an odd function when $k=2,\ldots, N+1$, i.e.,  
$$
\varphi_k(-x) = -\varphi_k(x)
\quad
\text{for any}~
x \in B_\beta \setminus \overline{B}_\alpha.
$$
Let us remark that it is not known whether any second eigenfunction of the problem \eqref{eq:N} in a general centrally symmetric domain $\Omega$ which is homeomorphic to a spherical shell is odd, see \cite{kennedy}. 
\end{remark}

\begin{remark}\label{rem:multip}
	The highest index in \eqref{eq:multip2}  occurs as
	$$
	\frac{N(N+3)}{2} = N+1 + \frac{(N+2)(N-1)}{2}, 
	$$
	where $\frac{(N+2)(N-1)}{2} = \Lambda_2$ is the dimension of $H_2$, see Proposition \ref{prop:multiplicity}.
\end{remark}

The following auxiliary lemma will be needed to obtain Proposition  \ref{prop:bound} below, see Appendix \ref{sec:appendix2} for the proof.
\begin{lemma}\label{lem:v}
	Let $l \in \mathbb{N}$ and let
	$v$ be a positive  eigenfunction corresponding to the eigenvalue $\mu_{l,1}$ of the SL problem \eqref{eq:ode}, \eqref{eq:bcN}.
	Then for any $r \in (\alpha,\beta)$ we have  $v'(r) > 0$  and
	\begin{equation}\label{eq:long}
    \left(\frac{l(l+N-2)}{r^2}
    -
    \mu_{l,1}\right)v^2(r)
    \geq
    \left(\frac{l(l+N-2)}{\beta^2}
    -
    \mu_{l,1}\right)v^2(\beta).
	\end{equation}
\end{lemma}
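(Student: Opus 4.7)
The plan is to integrate the ODE once. Multiplying \eqref{eq:ode} by $r^{N-1}$ puts it in the form $-(r^{N-1} v')' = (\mu_{l,1} r^{N-1} - c r^{N-3}) v$ with $c := l(l+N-2) > 0$; set $u(r) := r^{N-1} v'(r)$. The boundary conditions \eqref{eq:bcN} yield $u(\alpha) = u(\beta) = 0$ (for $\alpha = 0$ this follows from the explicit Bessel form, which gives $v(r) \sim r^l$ near the origin, so that $r^{N-1} v'(r) \to 0$ as $r \to 0^+$ when $N \geq 2$). Since $v > 0$ on $(\alpha, \beta)$, the sign of $u'(r) = (c/r^2 - \mu_{l,1}) r^{N-1} v(r)$ coincides with that of $c/r^2 - \mu_{l,1}$, which vanishes at the single point $r^\ast := \sqrt{c/\mu_{l,1}}$. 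If $r^\ast \notin (\alpha, \beta)$, then $u$ would be monotonic on $(\alpha, \beta)$; combined with $u(\alpha) = u(\beta) = 0$ this forces $u \equiv 0$, i.e.\ $v$ constant, which cannot satisfy \eqref{eq:ode} with $l \geq 1$. Hence $r^\ast \in (\alpha, \beta)$, $u$ strictly increases on $(\alpha, r^\ast)$ and strictly decreases on $(r^\ast, \beta)$, attains a unique maximum at $r^\ast$, and is therefore positive throughout $(\alpha, \beta)$. This proves $v'(r) > 0$ for $r \in (\alpha, \beta)$.

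For the second assertion, define $f(r) := (c/r^2 - \mu_{l,1}) v^2(r)$; the goal is $f(r) \geq f(\beta)$. Since $\beta > r^\ast$, one has $f(\beta) < 0$. On $(\alpha, r^\ast]$ the factor $c/r^2 - \mu_{l,1}$ is nonnegative, so $f(r) \geq 0 > f(\beta)$ is immediate. On $(r^\ast, \beta)$, consider $g(r) := -f(r) = (\mu_{l,1} - c/r^2) v^2(r) \geq 0$ and compute
\[
g'(r) = \frac{2c}{r^3} v^2(r) + 2\left(\mu_{l,1} - \frac{c}{r^2}\right) v(r)\, v'(r);
\]
both summands are strictly positive on $(r^\ast, \beta)$ (the second using $v' > 0$ from the first step), so $g$ is strictly increasing on $(r^\ast, \beta)$. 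This yields $g(r) \leq g(\beta)$, equivalently $f(r) \geq f(\beta)$, on $(r^\ast, \beta)$. Combining the two intervals delivers the desired inequality on all of $(\alpha, \beta)$.

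The only slightly delicate point is justifying $u(0^+) = 0$ in the case $\alpha = 0$, for which one invokes the standard behaviour of the Bessel-type solution at the origin; everything else is a clean two-case split at the crossover $r^\ast$ where the effective potential $c/r^2$ meets the eigenvalue $\mu_{l,1}$. I do not foresee any genuine obstacle.
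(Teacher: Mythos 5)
Your proof is correct, and it is organized differently enough from the paper's to be worth comparing. Both arguments ultimately rest on the same identity $(r^{N-1}v')' = \bigl(\tfrac{l(l+N-2)}{r^2}-\mu_{l,1}\bigr)r^{N-1}v$ and on the fact that the coefficient changes sign at most once; but the paper proves $v'>0$ by two successive contradiction arguments (first ruling out an interior zero $\gamma$ of $v'$ via a case split on the sign of $v''(\gamma)$, then ruling out $v'<0$), whereas you argue directly that the crossover point $r^\ast=\sqrt{l(l+N-2)/\mu_{l,1}}$ must lie in $(\alpha,\beta)$, so that $u=r^{N-1}v'$ rises from $0$ and falls back to $0$ and is hence positive. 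This buys you, for free, the strict inequality $\tfrac{l(l+N-2)}{\beta^2}-\mu_{l,1}<0$, which the paper only obtains as a non-strict inequality from $v''(\beta)\le 0$ and must then treat the equality case separately. For \eqref{eq:long} the paper compares the two sides termwise using the monotonicity of $v$ and the ordering of the coefficients, while you differentiate $f(r)=\bigl(\tfrac{l(l+N-2)}{r^2}-\mu_{l,1}\bigr)v^2(r)$ and show $-f$ increases on $(r^\ast,\beta)$; both are equally elementary. A further point in your favour is that you explicitly justify $r^{N-1}v'(r)\to 0$ as $r\to 0^+$ in the singular case $\alpha=0$ (where the literal condition $v'(\alpha)=0$ fails for $l=1$), a point the paper's proof glosses over. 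Two trivialities you leave implicit but should note: $\mu_{l,1}>0$ for $l\ge 1$ (immediate from the Rayleigh quotient \eqref{eq:RQ}), so $r^\ast$ is well defined, and $v(\beta)>0$ (by uniqueness for the ODE, since $v(\beta)=v'(\beta)=0$ would force $v\equiv 0$), which you use when asserting $f(\beta)<0$.
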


Finally, we establish the following general result which will be important in the proof of the main theorem.
\begin{proposition}\label{prop:bound}
		Let $\Omega$ be a bounded domain satisfying the assumption \ref{assumption}. 
		Let $0 \leq \alpha < \beta$ be such that 
		$B_\alpha \subset \Omega_{\textnormal{in}}$ and $|\Omega| = |B_\beta \setminus \overline{B}_\alpha|$. 
		Let $l \in \mathbb{N}$ and let $v$ be a positive  eigenfunction corresponding to the eigenvalue $\mu_{l,1}$ of the SL problem \eqref{eq:ode}, \eqref{eq:bcN}.
		Define 
		\begin{equation}\label{eq:G}
        G_l(r)
        =
        \left\{
        \begin{aligned}
      &v(r) 	&&\text{if}~ r \in (\alpha,\beta), \\
      &v(\beta) &&\text{if}~ r \geq \beta.
        \end{aligned}
        \right.
        \end{equation}
		Then
	   \begin{equation}\label{eq:bound1}
	   \frac{\int_{\Omega}
	   	\left(
		(G_l'(r))^2 + \frac{l(l+N-2)G_l^2(r)}{r^2}
		\right) 
		dx}
		{\int_{\Omega} G_l^2(r) \, dx}
		\leq \mu_{l,1},
	   \end{equation} 
	   and equality holds in \eqref{eq:bound1} if and only if $\Omega$ coincides a.e.\ with $B_\beta \setminus \overline{B}_\al$.
	\end{proposition}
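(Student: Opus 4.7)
The plan is to rewrite the claimed inequality \eqref{eq:bound1} as $\int_\Omega F_l(|x|)\,dx \le 0$, where
\[
F_l(r) := (G_l'(r))^2 + \Bigl(\frac{l(l+N-2)}{r^2} - \mu_{l,1}\Bigr) G_l^2(r),
\]
and then to exploit a Weinberger-type mass-exchange between $\Omega$ and the shell $B_\beta \setminus \overline{B}_\alpha$. The first step is to verify the baseline identity $\int_{B_\beta \setminus \overline{B}_\alpha} F_l(|x|)\,dx = 0$. Passing to spherical coordinates, this reduces to a one-dimensional calculation that follows from multiplying \eqref{eq:ode3} by $v$, integrating over $(\alpha,\beta)$, and using the Neumann boundary conditions \eqref{eq:bcN} to eliminate the boundary terms.

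Next, since $B_\alpha \subset \Omega_{\textnormal{in}}$ gives $\Omega \cap B_\alpha = \emptyset$, I would decompose
\[
\Omega = \bigl(\Omega \cap (B_\beta \setminus \overline{B}_\alpha)\bigr) \cup (\Omega \setminus \overline{B}_\beta),
\qquad
B_\beta \setminus \overline{B}_\alpha = \bigl(\Omega \cap (B_\beta \setminus \overline{B}_\alpha)\bigr) \cup \bigl((B_\beta \setminus \overline{B}_\alpha) \setminus \overline{\Omega}\bigr),
\]
so that the equal-volume hypothesis yields $|\Omega \setminus \overline{B}_\beta| = |(B_\beta \setminus \overline{B}_\alpha) \setminus \overline{\Omega}|$. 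Subtracting the vanishing baseline and cancelling the common overlap gives
\[
\int_\Omega F_l(|x|)\,dx = \int_{\Omega \setminus \overline{B}_\beta} F_l\,dx - \int_{(B_\beta \setminus \overline{B}_\alpha) \setminus \overline{\Omega}} F_l\,dx.
\]

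For the pointwise step I would introduce the benchmark constant $C := v^2(\beta)\bigl(\tfrac{l(l+N-2)}{\beta^2} - \mu_{l,1}\bigr)$. On $\Omega \setminus \overline{B}_\beta$, where $G_l' \equiv 0$, $G_l \equiv v(\beta)$, and $r > \beta$, the monotonicity of $r \mapsto r^{-2}$ gives $F_l(r) \le C$. On $(B_\beta \setminus \overline{B}_\alpha) \setminus \overline{\Omega}$, dropping the nonnegative term $(v'(r))^2$ and invoking \eqref{eq:long} from Lemma \ref{lem:v} gives $F_l(r) \ge C$. Combined with the equality of the measures of the two exchange sets, this forces $\int_\Omega F_l \le C \cdot |\Omega \setminus \overline{B}_\beta| - C \cdot |(B_\beta \setminus \overline{B}_\alpha) \setminus \overline{\Omega}| = 0$, which is \eqref{eq:bound1}. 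For the equality statement, attainment of both pointwise bounds must be a.e.: since $l \ge 1$ and $v(\beta) > 0$, the identity $F_l(r) = C$ on $\Omega \setminus \overline{B}_\beta$ forces $r = \beta$, impossible on a set of positive measure; and equality in the interior estimate forces $(v'(r))^2 = 0$, contradicting $v'(r) > 0$ on $(\alpha,\beta)$ from Lemma \ref{lem:v}. Both exchange sets must therefore have zero measure, giving $\Omega = B_\beta \setminus \overline{B}_\alpha$ a.e.

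The delicate point is the pointwise step: the sign of the benchmark $C$ is not under our control (it depends on $l$, $\alpha$, $\beta$), so one must check that the monotonicity bound for $r > \beta$ and the bound \eqref{eq:long} for $r < \beta$ are oriented consistently, so that after the subtraction the sign of $C$ becomes irrelevant. The requirement that $B_\alpha \subset \Omega_{\textnormal{in}}$ enters precisely here, preventing any contribution from an unwanted region $\Omega \cap B_\alpha$ where the comparison with $C$ would fail.
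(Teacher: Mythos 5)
Your proof is correct and is essentially the paper's own argument: the same decomposition of $\Omega$ and $B_\beta \setminus \overline{B}_\alpha$ into a common overlap plus two equal-measure exchange sets, the same baseline identity over the shell, and the same pointwise comparison of $H(r)-\mu_{l,1}G_l^2(r)$ against its value at $r=\beta$ via Lemma \ref{lem:v} and the monotonicity of $r\mapsto r^{-2}$. The only cosmetic difference is that you subtract $\mu_{l,1}\int_\Omega G_l^2$ up front and work with $\int_\Omega F_l \le 0$, whereas the paper cross-multiplies the two Rayleigh quotients before arriving at the identical pointwise inequalities.
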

	\begin{proof}	
	Denote $G(r)=G_l(r)$ and $H(r)=(G'(r))^2 + \frac{l(l+N-2)G^2(r)}{r^2}$, for brevity.
	We see from \eqref{eq:mudef1} that 
	\begin{equation}\label{eq:bound2}
	\mu_{l,1}
	=
	\frac{\int_\al^\be
	H(r)r^{N-1} \,dr}{\int_\al^\be v^2(r)r^{N-1} \,dr}
	=
	\frac{\int_{B_\beta \setminus \overline{B}_\alpha} H(r) \, dx}
	{\int_{B_\beta \setminus \overline{B}_\alpha} G^2(r) \, dx}.
	\end{equation}
	Thus, the desired inequality \eqref{eq:bound1} is equivalent to 
\begin{equation}\label{eq:H<G}
    \frac{\int_{\Om} H(r) \, dx}
{\int_{\Om} G^2(r) \, dx}\leq \frac{\int_{B_\beta \setminus \overline{B}_\alpha} H(r) \, dx}
{\int_{B_\beta \setminus \overline{B}_\alpha} G^2(r) \, dx}
.
\end{equation}
In order to prove \eqref{eq:H<G}, we first represent $\Omega$ as a union of disjoint sets as follows:
\begin{align*}
\Om
= [\Om \cap (B_\beta \setminus \overline{B}_\alpha)] \cup [\Om \cap (B_\beta \setminus \overline{B}_\alpha)^c]
= [\Om \cap (B_\beta \setminus \overline{B}_\alpha)] \cup [\Om \cap B_\beta^c] \cup [\Om\cap \overline{B}_\alpha].
\end{align*}
Similarly,
\begin{align*}
 B_\beta \setminus \overline{B}_\alpha =& [\Om \cap (B_\beta \setminus \overline{B}_\alpha)]\cup [\Om^c \cap (B_\beta \setminus \overline{B}_\alpha)].
\end{align*}
From the choice of  $\al$ and $\be$, we have  $|\Om|=| B_\beta \setminus \overline{B}_\alpha|$ and $|\Om\cap \overline{B}_\alpha|=0$, which yields
\begin{equation}\label{eq:measures}
 |\Om^c \cap (B_\beta \setminus \overline{B}_\alpha)|=|\Om \cap B_\beta^c|.
\end{equation} 
Therefore,
\begin{align}
\notag
\int_{\Omega} H(r) \, dx 
&= \int_{\Omega\cap (B_\beta \setminus \overline{B}_\alpha)}  H(r)\, dx 
+
\int_{\Omega \cap B_\beta^c} H(r) \, dx\\
\label{eq:H1}
&= 
\int_{B_\beta \setminus \overline{B}_\alpha}  H(r)\, dx - \int_{\Om^c \cap (B_\beta \setminus \overline{B}_\alpha)}  H(r)\, dx
+
\int_{\Omega \cap B_\beta^c} H(r) \, dx
\end{align}
and, in the same manner, 
\begin{equation}\label{eq:G1}
\int_{\Omega} G^2(r) \, dx
=  \int_{B_\beta \setminus \overline{B}_\alpha}  G^2(r)\, dx 
- 
\int_{\Om^c \cap (B_\beta \setminus \overline{B}_\alpha)}  G^2(r)\, dx
+
\int_{\Omega \cap B_\beta^c} G^2(r) \, dx.
\end{equation}
Substituting \eqref{eq:H1} and \eqref{eq:G1} into \eqref{eq:H<G} and rearranging, we see that \eqref{eq:H<G} is satisfied if and only if
\begin{align}
\notag
&- \int_{\Om^c \cap (B_\beta \setminus \overline{B}_\alpha)}  H(r)\, dx \int_{B_\beta \setminus \overline{B}_\alpha}  G^2(r)\, dx 
+
\int_{\Omega \cap B_\beta^c} H(r) \, dx \int_{B_\beta \setminus \overline{B}_\alpha}  G^2(r)\, dx \\
\label{eq:H<G1}
&\leq
- \int_{\Om^c \cap (B_\beta \setminus \overline{B}_\alpha)}  G^2(r)\, dx \int_{B_\beta \setminus \overline{B}_\alpha}  H(r)\, dx 
+
\int_{\Omega \cap B_\beta^c} G^2(r) \, dx \int_{B_\beta \setminus \overline{B}_\alpha}  H(r)\, dx.
\end{align}
Dividing both sides of \eqref{eq:H<G1} by $\int_{B_\beta \setminus \overline{B}_\alpha}  G^2(r)\, dx$ and using \eqref{eq:bound2}, 
we see that \eqref{eq:H<G1} is equivalent to 
\begin{equation}\label{eq:H<G2}
\int_{\Omega \cap B_\beta^c} H(r) \, dx - \int_{\Om^c \cap (B_\beta \setminus \overline{B}_\alpha)}  H(r)\, dx
\leq
\mu_{l,1}\left(
\int_{\Omega \cap B_\beta^c} G^2(r) \, dx
-
\int_{\Om^c \cap (B_\beta \setminus \overline{B}_\alpha)}  G^2(r)\, dx
\right).
\end{equation}
Notice now that for any $x \in \Omega \cap B_\beta^c$ there holds $|x| \geq \beta$, and hence
\begin{equation*}\label{eq:eqGH}
G(|x|)=G(\beta)
\quad \text{and} \quad
H(|x|)
=
\frac{l(l+N-2)G^2(\beta)}{|x|^2}
\leq
\frac{l(l+N-2)G^2(\beta)}{\beta^2}=H(\beta),
\end{equation*}
where the inequality for $H$ is strict if $|x| > \beta$. 
This yields, in view of \eqref{eq:measures}, 
\begin{align}
\label{eq:Hstrict}
\int_{\Omega \cap B_\beta^c} H(r) \, dx
&\leq
\int_{\Omega \cap B_\beta^c} H(\beta) \, dx
=
\int_{\Om^c \cap (B_\beta \setminus \overline{B}_\alpha)}
H(\beta) \,dx,\\
\label{eq:Gstrict}
\int_{\Omega \cap B_\beta^c} G^2(r) \, dx
&=
\int_{\Omega \cap B_\beta^c} G^2(\beta) \, dx
=
\int_{\Om^c \cap (B_\beta \setminus \overline{B}_\alpha)} G^2(\beta) \, dx,
\end{align}
where the inequality \eqref{eq:Hstrict} is strict if and only if $|\Omega \cap B_\beta^c| > 0$. 
Thus, using \eqref{eq:Hstrict} and \eqref{eq:Gstrict}, we conclude that \eqref{eq:H<G2} is satisfied provided 
$$
\int_{\Om^c \cap (B_\beta \setminus \overline{B}_\alpha)}
\left[
H(\beta)-H(r)-\mu_{l,1}\left(G^2(\beta)-G^2(r)\right)
\right]
dx\le 0,
$$ 
or, equivalently, 
$$
\int_{\Om^c \cap (B_\beta \setminus \overline{B}_\alpha)}\left[
\left(
\frac{l(l+N-2)}{\beta^2}-
\mu_{l,1}\right)v^2(\beta)
-\left(
 \frac{l(l+N-2)}{r^2}-
\mu_{l,1}\right)v^2(r)
- (v'(r))^2
\right] 
dx \leq 0.
$$
Lemma \ref{lem:v} asserts that the  above integrand is negative on $(\alpha,\beta)$, which
completes the proof of the inequality \eqref{eq:bound1} and shows that if $|\Omega \cap B_\beta^c| > 0$ or, equivalently, $|\Om^c \cap (B_\beta \setminus \overline{B}_\alpha)|>0$, then \eqref{eq:bound1} is strict.
Clearly, if $|\Omega \cap B_\beta^c| = 0$ or, equivalently, $|\Om^c \cap (B_\beta \setminus \overline{B}_\alpha)| = 0$, then 
$\Omega \subset B_\beta \setminus 
\overline{B}_\alpha$, and hence  $|(B_\beta \setminus 
\overline{B}_\alpha) \setminus \Omega | = 0$.
That is, equality holds in \eqref{eq:bound1} if and only if $\Omega$ coincides a.e.\ with $B_\beta \setminus 
\overline{B}_\alpha$.
\end{proof}

\begin{remark}\label{rem:weinberger}
In the original proof of \textsc{Weinberger} \cite{weinberger}, the inequality 
\eqref{eq:bound1} (or, equivalently,
\eqref{eq:H<G}) for $l=1$ and $\alpha=0$ was proved  by showing that $H(r)$ decreases and $G(r)$ increases on $(0,\beta)$, and hence
\begin{equation}\label{eq:hhgg}
\int_{\Omega} H(r) \, dx 
\leq \int_{B_\beta} H(r)\, dx 
\quad \text{and} \quad 
\int_{\Omega} G^2(r) \, dx 
\geq \int_{B_\beta}  G^2(r)\, dx.
\end{equation}
For $l=1$ and $\alpha>0$, the above inequalities are also satisfied. 
However, according to our numerical simulation, the first inequality in \eqref{eq:hhgg} does not hold, in general, for $l\ge 2$, since $H(r)$ might not be a decreasing function.  
Thus, our argument presented in the proof of Proposition \ref{prop:bound} is more universal.
\end{remark}

\section{Proof of Theorem \ref{thm1}}\label{sec:proof1}

Let $\mathcal{X}_k$ be the collection of all $k$-dimensional subspaces of $H^1(\Om)$ that are orthogonal to the (one-dimensional) subspace of constant functions. Then, for any $k \in \mathbb{N}$,
by the Courant-Fischer minimax formula,  
\begin{equation}\label{eq:mukdef}
\mu_{k+1}(\Omega) = \min_{X\in\mathcal{X}_k} \max_{u \in X\setminus \{0\}} \frac{\int_{\Omega} |\nabla u|^2 \, dx}{\int_{\Omega} u^2 \, dx}.
\end{equation}
In particular, the second eigenvalue of the problem \eqref{eq:N} is defined as
\begin{equation}\label{eq:mu2def}
\mu_2(\Omega) = \min\left\{\frac{\int_{\Omega} |\nabla u|^2 \, dx}{\int_{\Omega} u^2 \, dx}: u \in H^1(\Omega) \setminus \{0\},~ \int_{\Omega} u \, dx = 0\right\}.
\end{equation}

We divide the proof of Theorem \ref{thm1} into four subsections according to the consideration of the inequalities \eqref{eq:SWNN}, \eqref{eq:SWNNx}, \eqref{eq:SWNNy}, and \eqref{eq:SWNNz}.

\subsection{Proof of \eqref{eq:SWNN}}\label{sec:proof11}
Recalling that $\mu_2(B_\be\setminus \overline{B}_\al)=\mu_{1,1}$ by  Corollary \ref{cor:multiplicity}, we are going to show that
\begin{equation*}\label{eq:SWNNfirst}
\mu_{2}(\Omega) 
\leq \mu_{1,1}.
\end{equation*} 
To provide appropriate trial functions for the variational characterization \eqref{eq:mu2def} of $\mu_{2}(\Omega)$, let us consider the function $G_1(r)$ defined in Proposition \ref{prop:bound} with $l=1$. 
Since $v'(\beta)=0$, we see that $G_1(r)$ is at least a $C^1$-function on $(\alpha,+\infty)$.
For each $i \in \{1,2, \ldots, N\}$, consider the function
$\frac{G_1(r)}{r}x_i$, where $r=|x|$ and $x = (x_1,x_2,\dots,x_N)\in \Omega$.
Notice that $\frac{G_1(r)}{r}x_i\in H^1(\Omega)$. 
By
Proposition \ref{2Orthogonality00} \ref{2Orthogonality00:1} (if $N = 2$) or
Proposition \ref{Orthogonality00} (if $N \geq 3$) we have
\begin{equation}\label{eq:orthog1}
\int_{\Omega} \frac{G_1(r)}{r}x_i \, dx = 0, \quad i=1,\dots,N,
\end{equation}
in view of the symmetry of order $2$ or central symmetry of $\Omega$.
Thus, each $\frac{G_1(r)}{r}x_i$ is a valid trial function for \eqref{eq:mu2def}, and hence
\begin{equation}\label{eq:test}
\mu_2(\Om) \int_{\Omega} \frac{G_1(r)^2}{r^2}x_i^2 \, dx\leq \int_{\Omega} \left|\nabla \left(\frac{G_1(r)}{r}x_i\right)\right|^2  dx 
\end{equation}
for all $i \in \{1,\dots,N\}$.
Moreover, from Remark \ref{remark:norm} we have
$$
\int_{\Omega} \left|\nabla \left(\frac{G_1(r)}{r}x_i\right)\right|^2 dx
=
\int_{\Omega}
\left(
\frac{(G_1'(r))^2}{r^2}x_i^2 - \frac{G_1^2(r)}{r^4}x_i^2 + \frac{G_1^2(r)}{r^2}
\right)
dx.
$$
Summing over $i$, we derive from \eqref{eq:test} that
\begin{equation}\label{eq:taub}
\mu_2(\Omega)
\leq 
\frac{\int_{\Omega}
	\left(
	(G_1'(r))^2 + \frac{(N-1)G_1^2(r)}{r^2}
	\right)
	dx}
{\int_{\Omega} G_1^2(r) \, dx}\leq \mu_{1,1},
\end{equation}
where the last inequality is given by Proposition \ref{prop:bound} with $l=1$. 
This establishes the inequality \eqref{eq:SWNN}.
Moreover, if equality holds in \eqref{eq:SWNN}, then it follows from \eqref{eq:taub} and Proposition \ref{prop:bound} that $\Omega$ coincides a.e.\ with $B_\beta \setminus \overline{B}_\al$.
	
\subsection{Proof of \eqref{eq:SWNNx}}
In view of Corollary \ref{cor:multiplicity}, to establish the inequality \eqref{eq:SWNNx} it is enough to prove that 
\begin{equation*}\label{SW3}
\mu_{N+1}(\Omega) \leq \mu_{1,1}
\end{equation*}
under the assumption that $\Omega$ is symmetric of order $4$.
As an admissible choice of the $N$-dimensional subspace of $H^1(\Omega)$ for the variational characterization \eqref{eq:mukdef} of $\mu_{N+1}(\Omega)$, we take
$$
X_N = \text{span}\left\{ \frac{G_1(r)}{r}x_i,~ i=1,2,\ldots, N \right\},
$$
where $G_1(r)$ is defined in Proposition \ref{prop:bound} with $l=1$.
From \eqref{eq:orthog1}, $\int_\Omega u \, dx =0$ for any $u\in X_N$, and hence we indeed have $X_N \in \mathcal{X}_N$. 
Moreover, by the symmetry of order $4$, we deduce from Lemma \ref{lem:z1z2z3-tildeN2} and Remark \ref{rem:lem:z1} (if $N=2$) or Lemma \ref{lem:z1z2z3-tilde} (if $N \geq 3$) that
\begin{equation}\label{eq:ggg'g'}
\int_\Omega \left(\frac{G_1(r)}{r}x_i\right)  \left(\frac{G_1(r)}{r}x_j\right) \,dx=0 
\quad \text{and} \quad
\int_\Omega \nabla \left(\frac{G_1(r)}{r}x_i\right)  \nabla \left(\frac{G_1(r)}{r}x_j\right) dx =0
\end{equation}
for any $i \neq j$.
At the same time, by Proposition \ref{2Orthogonality00} \ref{2Orthogonality00:2} (if $N=2$) or Proposition \ref{Orthogonality0} \ref{Orthogonality0:3} (if $N \geq 3$), and Remark \ref{remark:norm}, there exist constants $A_1,A_2>0$
such that
\begin{align*}
\int_\Omega \left(\frac{G_1(r)}{r}x_i\right)^2 \, dx
&=
\int_\Omega \frac{G_1^2(r)}{r^2}x_i^2\,dx
=
A_1,\\
\int_\Omega \left|\nabla\left(\frac{G_1(r)}{r} x_i\right)\right|^2dx 
&=\int_{\Omega}
\left(
\frac{(G_1'(r))^2}{r^2}x_i^2 - \frac{G_1^2(r)}{r^4}x_i^2 + \frac{G_1^2(r)}{r^2}
\right)
dx
= 
A_2,
\end{align*}
for any $i \in \{1,2,\ldots, N\}$.
Therefore,
\begin{align*}
NA_1
&=
\sum_{i=1}^N\int_\Omega \left(\frac{G_1(r)}{r}x_i\right)^2  dx
= 
\int_\Omega G_1^2(r) \,dx,\\
NA_2
&=
\sum_{i=1}^N
\int_{\Omega}
\left(
\frac{(G_1'(r))^2}{r^2}x_i^2 - \frac{G_1^2(r)}{r^4}x_i^2 + \frac{G_1^2(r)}{r^2}
\right)
dx
=
\int_{\Omega}
\left((G_1'(r))^2+ \frac{(N-1)G_1^2(r)}{r^2}\right)dx.
\end{align*}
Thus, for each  $i \in \{1,2,\ldots, N\}$ we have
\begin{align}
\int_\Omega \left(\frac{G_1(r)}{r}x_i\right)^2 dx
&=A_1
=
\frac{1}{N}\int_\Omega G_1^2(r) \,dx,\label{eq:A}\\
\int_\Omega \left|\nabla\left(\frac{G_1(r)}{r} x_i\right)\right|^2dx 
&=A_2
=
\frac{1}{N}\int_{\Omega}
\left((G_1'(r))^2+ \frac{(N-1)G_1^2(r)}{r^2}\right)dx.\label{eq:A'}
\end{align}

Since for any $u\in X_N \setminus \{0\}$ there exist $c_1,c_2, \ldots, c_N \in \mathbb{R}$, not simultaneously equal to zero, such that
$$
u = c_1 \frac{G_1(r)}{r}x_1 + \dots + c_N \frac{G_1(r)}{r}x_N,
$$
the orthogonality \eqref{eq:ggg'g'} and the expressions \eqref{eq:A} and \eqref{eq:A'} imply that
\begin{align*} 
\frac{\int_{\Omega} |\nabla u|^2 \, dx}{\int_{\Omega} u^2 \, dx}
&=
\frac{\sum_{i=1}^N c_i^2\int_{\Omega} \left|\nabla \left(\frac{G_1(r)}{r}x_i\right)\right|^2 dx}
{\sum_{i=1}^N c_i^2\int_{\Omega} \left(\frac{G_1(r)}{r}x_i\right)^2 dx}
=
\frac{A_2}{A_1}
=
\frac{\int_{\Omega}
\left((G_1'(r))^2+ \frac{(N-1)G_1^2(r)}{r^2}\right) dx}
{\int_\Omega G_1^2(r) \,dx }.
\end{align*}
Therefore, for any  $u\in X_N \setminus \{0\}$, by Proposition \ref{prop:bound} with $l=1$ we get
$$
\frac{\int_{\Omega} |\nabla u|^2 \, dx}{\int_{\Omega} u^2 \, dx}\le \mu_{1,1},
$$ 
and equality holds if and only if $\Omega$ coincides a.e.\ with $B_\beta \setminus \overline{B}_\al$.
Finally, by the Courant-Fischer minimax formula \eqref{eq:mukdef}, 
\begin{equation*}\label{eq:muN1}
\mu_{N+1}(\Omega) \leq \max_{u \in X_N\setminus \{0\}} \frac{\int_{\Omega} |\nabla u|^2 \, dx}{\int_{\Omega} u^2 \, dx} \le \mu_{1,1}, 
\end{equation*}
which completes the proof of \eqref{eq:SWNNx}.

\subsection{Proof of \eqref{eq:SWNNy}}\label{section:SWNNy}

Recalling that $\mu_{N+2}(B_\be\setminus \overline{B}_\al)=\mu_{2,1}$ by Corollary \ref{cor:multiplicity}, let us prove that
\begin{equation}\label{SW42n}
\mu_{N+2}(\Omega) \leq \mu_{2,1}, 
\end{equation}
assuming that $\Omega$ is symmetric of order $4$. 
Let  $G_2(r)$ be the function defined by Proposition \ref{prop:bound} with $l=2$. Clearly, $G_2(r)$ is at least a $C^1$-function for $r \in (\alpha,+\infty)$.
As an admissible choice of the $(N+1)$-dimensional subspace of $H^1(\Omega)$ for the variational characterization \eqref{eq:mukdef} of $\mu_{N+2}(\Omega)$, we take
$$
X_{N+1} = 
\text{span}
\left\{ 
\frac{G_2(r)}{r}x_1, \dots, \frac{G_2(r)}{r}x_N, w
\right\},
$$
where we define $w$ as an extension to $\Omega$ of a certain $(N+2)$-th eigenfunction of \eqref{eq:N} on $B_\beta \setminus \overline{B}_\alpha$.
Namely, recall that any $(N+2)$-th eigenfunction $\varphi_{N+2}$ of \eqref{eq:N} on $B_\beta \setminus \overline{B}_\alpha$ has the form 
$$
\varphi_{N+2}(x)
=
\frac{v(r)}{r^2}h(x),
$$
where $h\in H_2$ and $v$ is an eigenfunction of the SL problem 
\eqref{eq:ode}, \eqref{eq:bcN} 
associated to $\mu_{2,1}$, see Remark \ref{rem:radial}. 
We use the orthogonal basis $Z_2\cup \widetilde{Z_3}$ of $H_2$ (see Appendix \ref{sec:Orthogonal}), where 
\begin{align*}
    Z_2 
    &=
    \left\{
    x_i x_j:~ 
    i<j \text{ and } i,j = 1,2,\ldots, N \right\},\\  
    \widetilde{Z_3}
    &=
    \left\{
    \frac{1}{\sqrt{i(i+1)}}\left(\sum_{j=1}^ix_j^2 - i x_{i+1}^2\right):~
    i =1,2,\ldots,N-1
    \right\},
\end{align*}
to define the desired function $w$ as follows:
\begin{equation*}\label{eq:ww}
w 
= 
\sqrt{2} \sum_{i=1}^{N-1} \sum_{j=i+1}^N  \frac{G_2(r)}{r^2} x_i x_j
+
\sum_{i=1}^{N-1} \frac{G_2(r)}{\sqrt{i(i+1)}r^2} \left(\sum_{j=1}^i x_j^2 - i x_{i+1}^2\right).
\end{equation*}
Let us remark that in the case $N=2$ the expression for $w$ is reduced to
$$
w = \frac{G_2(r)}{\sqrt{2}r^2}(2 x_1 x_2 + x_1^2-x_2^2).
$$

In view of the symmetry of order $4$, we deduce from Proposition \ref{2Orthogonality00} \ref{2Orthogonality00:1}, \ref{2Orthogonality00:2} (if $N=2$) or
Proposition \ref{Orthogonality00} \ref{Orthogonality00:2} and Proposition \ref{Orthogonality0} \ref{Orthogonality0:3} (if $N \geq 3$) that $\int_\Omega u \,dx = 0$ for any $u \in X_{N+1}$.
Analogously to \eqref{eq:ggg'g'}, we deduce that
\begin{equation}\label{eq:ggg'g'2}
\int_\Omega \left(\frac{G_2(r)}{r}x_i\right)  \left(\frac{G_2(r)}{r}x_j\right) dx=0 
\quad \text{and} \quad
\int_\Omega \nabla \left(\frac{G_2(r)}{r}x_i\right) \nabla \left(\frac{G_2(r)}{r}x_j\right)=0.
\end{equation}
Moreover,  Lemma \ref{lem:z1z2z3-tildeN2} and Remark \ref{rem:lem:z1} (if $N=2$) or Lemma \ref{lem:z1z2z3-tilde} (if $N \geq 3$) also give
\begin{equation}\label{eq:ort-g-w}
\int_\Omega \left(\frac{G_2(r)}{r}x_i\right)  w \,dx=0 
\quad \text{and} \quad
\int_\Omega \nabla \left(\frac{G_2(r)}{r}x_i\right) \nabla w \, dx =0.
\end{equation}
In the same way as in the derivation of \eqref{eq:A}, \eqref{eq:A'}, there exist constants $A_3, A_4>0$ such that for every $i \in \{1,2,\ldots, N\}$ we have
\begin{align}
\int_\Omega \left(\frac{G_2(r)}{r}x_i\right)^2  dx
&=A_3
=
\frac{1}{N}\int_\Omega G_2^2(r) \,dx,\label{eq:A3}\\
\int_\Omega \left|\nabla\left(\frac{G_2(r)}{r} x_i\right)\right|^2dx 
&=A_4
=
\frac{1}{N}\int_{\Omega}
\left((G_2'(r))^2+ \frac{(N-1)G_2^2(r)}{r^2}\right)dx.
\label{eq:A4}
\end{align}

For each $u\in X_{N+1}\setminus\{0\}$ there exist $c_1,c_2, \ldots, c_{N+1} \in \mathbb{R}$, not simultaneously equal to zero, such that
$$
u = c_1 \frac{G_2(r)}{r}x_1 + \dots + c_N \frac{G_2(r)}{r}x_N + c_{N+1} w.
$$
Thus, by the orthogonality \eqref{eq:ggg'g'2}, \eqref{eq:ort-g-w}, and by the expressions \eqref{eq:A3}, \eqref{eq:A4} we obtain
\begin{equation}\label{eq:mu4maxn}
\frac{\int_{\Omega} |\nabla u|^2 \, dx}{\int_{\Omega} u^2 \, dx}
=
\frac{A_4 \displaystyle\sum_{i=1}^{N}c_i^2
	+ c_{N+1}^2 \int_\Omega |\nabla w|^2 \,dx}
{A_3 \displaystyle\sum_{i=1}^{N}c_i^2
	+ c_{N+1}^2 \int_\Omega w^2 \,dx}
\leq
\max\left\{\frac{A_4}{A_3},\frac{\int_\Omega |\nabla w|^2 \,dx}{\int_\Omega w^2 \,dx}\right\},
\end{equation}
and we know from \eqref{eq:A3}, \eqref{eq:A4} that 
\begin{equation*}\label{eq:BA1n}
\frac{A_4}{A_3}
=
\frac{\int_{\Omega}
	\left((G_2'(r))^2+ \frac{(N-1)G_2^2(r)}{r^2}\right)dx}{\int_\Omega G_2(r)^2\,dx }.
\end{equation*}
We claim that 
\begin{equation}\label{eq:claim}
    \frac{\int_\Omega |\nabla w|^2 \,dx}{\int_\Omega w^2 \,dx}=\frac{ \int_{\Omega} \left((G_2'(r))^2 +\frac{2 N G_2^2(r)}{r^2}\right) dx}{\int_\Omega G_2^2(r) \, dx}.
\end{equation}
Suppose we established this claim. 
Then we get from Proposition \ref{prop:bound} with $l=2$ that
\begin{equation}\label{eq:H<Nn+2}
\frac{A_4}{A_3}<\frac{\int_{\Omega} \left((G_2'(r))^2 +\frac{2 N G_2^2(r)}{r^2}\right) dx}{\int_\Omega G_2^2(r) \, dx}
\leq
\mu_{2,1},
\end{equation}
where the second inequality turns to equality if and only if $\Omega$ coincides a.e.\ with $B_\beta \setminus \overline{B}_\al$.
Therefore, by \eqref{eq:mu4maxn} and \eqref{eq:H<Nn+2}, for every $u\in X_{N+1} \setminus \{0\}$ we have
$$
\frac{\int_{\Omega} |\nabla u|^2 \, dx}{\int_{\Omega} u^2 \, dx} \leq \mu_{2,1}.
$$
Now, the Courant-Fischer minimax formula \eqref{eq:mukdef} yields the desired inequality \eqref{SW42n} as follows:
\begin{equation*}
\mu_{N+2}(\Omega) \leq \max_{u \in X_{N+1}\setminus \{0\}} \frac{\int_{\Omega} |\nabla u|^2 \, dx}{\int_{\Omega} u^2 \, dx} \le \mu_{2,1}.
\end{equation*}

Thus, to complete the proof, it remains to establish the claimed equality \eqref{eq:claim}. 
In view of the $L^2(\Omega)$-orthogonality given by 
Lemma \ref{lem:z1z2z3-tildeN2} (if $N=2$, and assuming hereinafter, without loss of generality, that the rotation $T$ is the identity, see Remark \ref{rem:lem:N2}) or Lemma \ref{lem:z1z2z3-tilde} (if $N \geq 3$), we have
\begin{equation*}\label{eq:intw}
\int_\Omega w^2 \,dx 
= \int_\Omega\frac{G_2^2(r)}{r^4}\left[
2 \sum_{i=1}^{N-1} \sum_{j=i+1}^N  x_i^2 x_j^2 \,dx
+
\sum_{i=1}^{N-1} \frac{1}{i(i+1)} \left(\sum_{j=1}^i x_j^2 - i x_{i+1}^2\right)^2 \right] dx,
\end{equation*}
and hence the identity \eqref{eq:alg4} implies that
\begin{align}\label{eq:wnorm}
\int_\Omega w^2 \,dx= \frac{N-1}{N}\int_\Omega G_2^2(r) \,dx.
\end{align}
Next, we calculate $\int_\Omega |\nabla w|^2 \,dx$. 
Again, by the $H^1(\Omega)$-orthogonality given by Lemma \ref{lem:z1z2z3-tildeN2} (if $N=2$) or Lemma \ref{lem:z1z2z3-tilde} (if $N \geq 3$), 
\begin{align*}
\notag
\int_\Omega |\nabla w|^2 \,dx 
&= 
2 \sum_{i=1}^{N-1} \sum_{j=i+1}^N \int_\Omega \left|\nabla \left(\frac{G_2(r)}{r^2} x_i x_j\right)\right|^2 dx
\\
&+
\sum_{i=1}^{N-1}\frac{1}{i(i+1)} \int_\Omega \left|\nabla \left(\frac{G_2(r)}{r^2} \left(\sum_{j=1}^i x_j^2 - i x_{i+1}^2\right)\right)\right|^2 dx.
\end{align*}
Using  the  expressions in  Remark \ref{remark:norm}, we obtain 
\begin{align}
\notag
&\int_\Omega |\nabla w|^2 \,dx 
\\
\notag
&= 
\int_\Omega \left[\frac{(G_2'(r))^2}{r^4} -\frac{4G_2^2(r)}{r^6}\right] 
\left[
2\sum_{i=1}^{N-1}\sum_{j=i+1}^N x_i^2 x_j^2 \,dx
+ \sum_{i=1}^{N-1}\frac{1}{i(i+1)} \left(\sum_{j=1}^i x_j^2 - i x_{i+1}^2\right)^2 \right]dx
\\
\label{eq:intnablaw1}
&+ \int_\Omega \frac{G_2^2(r)}{r^4} \left[2\sum_{i=1}^{N-1}\sum_{j=i+1}^N (x_i^2+ x_j^2)+4\sum_{i=1}^{N-1}\frac{1}{i(i+1)}\left(\sum_{j=1}^i x_j^2 +i x_{i+1}^2\right) \right]dx.
\end{align}
By the identities \eqref{eq:alg2} and \eqref{eq:alg12} we have 
\begin{align}
  2\sum_{i=1}^{N-1}\sum_{j=i+1}^N (x_i^2+ x_j^2)+4\sum_{i=1}^{N-1}\frac{1}{i(i+1)}\left(\sum_{j=1}^i x_j^2 +i x_{i+1}^2\right)
  &=
  2(N-1)r^2+\frac{4
(N-1)}{N}r^2\nonumber\\ &=\frac{2(N-1)(N+2)}{N}r^2.
\label{eq:secondterm}
\end{align} 
Finally, using \eqref{eq:alg4} and \eqref{eq:secondterm}, we conclude from \eqref{eq:intnablaw1} that
\begin{align}
\notag
\int_\Omega |\nabla w|^2 \,dx
&=
\frac{N-1}{N} \int_{\Omega} \left[(G_2'(r))^2 -\frac{4G_2^2(r)}{r^2}\right]dx
+\frac{2(N-1)(N+2)}{N} \int_\Omega \frac{G_2^2(r)}{r^2} \,dx\\
\label{eq:wgradientnorm}
&=
\frac{N-1}{N}\int_{\Omega} \left((G_2'(r))^2 +\frac{2 N G_2^2(r)}{r^2}\right) dx.
\end{align}
Combining now \eqref{eq:wnorm} and \eqref{eq:wgradientnorm}, we get
\begin{equation*}\label{eq:w-rayleigh}
\frac{\int_\Omega |\nabla w|^2 \,dx}{\int_\Omega w^2 \,dx}
=
\frac{\int_{\Omega} \left((G_2'(r))^2 +\frac{2 N G_2^2(r)}{r^2}\right) dx}{\int_\Omega G_2^2(r) \, dx}.
\end{equation*}
This establishes the claimed equality \eqref{eq:claim} and therefore completes the proof of the inequality \eqref{eq:SWNNy}.

\subsection{Proof of \eqref{eq:SWNNz}}

Due to Corollary \ref{cor:multiplicity}, in order to establish \eqref{eq:SWNNz} it is enough to prove that
\begin{equation*}\label{eq:8sym1}
\mu_{5}(\Omega) \leq \mu_{2,1},
\end{equation*}
assuming that $\Omega$ is symmetric of order $8$. 
Let the function $G_2$ be defined by Proposition \ref{prop:bound} with $l=2$. 
As an admissible choice of the $4$-dimensional subspace of $H^1(\Omega)$ for the variational characterization \eqref{eq:mukdef} of $\mu_{5}(\Omega)$, we take
$$
X_4 = 
\text{span}
\left\{ \frac{G_2(r)}{r}x_1, \frac{G_2(r)}{r}x_2, \frac{G_2(r)}{r^2}x_1x_2,\frac{G_2(r)}{r^2}(x_1^2-x_2^2)
\right\}.
$$
Since the symmetry of order $8$ implies the symmetry of order $4$, we get, as in Section \ref{section:SWNNy},
$\int_\Omega u \,dx = 0$ for any $u \in X_{5}$, and equalities \eqref{eq:A3}, \eqref{eq:A4} for $i \in \{1,2\}$ and $N=2$. 
Moreover, Lemma \ref{lem:z1z2z3-tildeN2} (in which we assume, without loss of generality, that the rotation $T$ is the identity, see Remark \ref{rem:lem:N2}) gives the mutual orthogonality of elements of $X_4$ in both $L^2(\Omega)$ and $H^1(\Omega)$.

On the other hand, since $\Omega$ is symmetric of order 8, Proposition \ref{2Orthogonality00} \ref{2Orthogonality00:3} provides the existence of $A_5, A_6>0$ such that
$$
\int_\Omega\frac{G_2^2(r)}{r^4} \left(x_1^2-x_2^2\right)^2 dx
=
4\int_\Omega\frac{G_2^2(r)}{r^4} x_1^2x_2^2\,dx=A_5
$$
and, 
using Remark \ref{remark:norm} with $i=1$, 
\begin{align*}
    \int_\Om \left|\nabla\left( \frac{G_2(r)}{r^2}(x_1^2-x_2^2)\right)\right|^2dx
    &=
    \int_{\Omega} \left[\left(\frac{(G_2'(r))^2}{r^4}-\frac{4G_2^2(r)}{r^6}\right)(x_1^2-x_2^2)^2+\frac{4G_2^2(r)}{r^2}\right]dx\\
    &=
    4\int_{\Omega} \left[\left(\frac{(G_2'(r))^2}{r^4}-\frac{4G_2^2(r)}{r^6}\right)x_1^2x_2^2+\frac{G_2^2(r)}{r^2}\right]dx\\
    &=
    4\int_\Om \left|\nabla\left( \frac{G_2(r)}{r^2}x_1x_2\right)\right|^2dx=A_6.
\end{align*}
This yields
\begin{align}
    2A_5&=\int_\Omega\frac{G_2^2(r)}{r^4} \left(x_1^2-x_2^2\right)^2 dx+4\int_\Omega\frac{G_2^2(r)}{r^4} x_1^2x_2^2\,dx
    =
    \int_\Om G_2^2(r) \,dx,\label{eq:8sym3}\\
    2A_6&=\int_{\Omega} \left[\left(\frac{(G_2'(r))^2}{r^4}-\frac{4G_2^2(r)}{r^6}\right)r^4+\frac{8G_2^2(r)}{r^2}\right]dx
    =
    \int_{\Omega} \left((G_2'(r))^2+\frac{4G_2^2(r)}{r^2}\right)dx.\label{eq:8sym4}
\end{align}
Thus, for any $u \in X_4 \setminus \{0\}$, by the orthogonality given by Lemma \ref{lem:z1z2z3-tildeN2}, and by the expressions \eqref{eq:A3}, \eqref{eq:A4}, and \eqref{eq:8sym3}, \eqref{eq:8sym4}, we get 
$$
\frac{\int_\Omega|\nabla u|^2 \,dx }{\int_\Omega u^2 \,dx}
\le 
\max\left\{\frac{A_4}{A_3},\frac{A_6}{A_5}\right\}
=
\frac{\int_{\Omega} \left((G_2'(r))^2 +\frac{4 G_2^2(r)}{r^2}\right)dx}{\int_{\Omega}G_2^2(r)\,dx}.
$$
Therefore, by the Courant-Fischer minimax formula \eqref{eq:mukdef} and Proposition \ref{prop:bound} with $l=2$, we conclude that
\begin{align*}
\mu_5(\Omega_{\text{out}} \setminus \overline{\Omega}_{\text{in}}) 
\leq 
\max_{u \in X_4\setminus \{0\}} \frac{\int_{\Omega} |\nabla u|^2 \, dx}{\int_{\Omega} u^2 \, dx}
\leq
\mu_{2,1},
\end{align*}
where the second inequality turns to equality if and only if $\Omega$ coincides a.e.\ with $B_\beta \setminus \overline{B}_\al$.
This completes the proof of \eqref{eq:SWNNz}.

\section{Counterexamples}\label{sec:counterexample}

In this section, we show that the
inequalities \eqref{eq:SWNN},  \eqref{eq:SWNNx},  \eqref{eq:SWNNy}, and \eqref{eq:SWNNz} stated in Theorem \ref{thm1} might fail for domains which do not satisfy the corresponding symmetry requirements. 
For simplicity, all the examples will be given in the planar case $N=2$.

\subsection{Counterexample to \eqref{eq:SWNN}} We 
consider the class of eccentric annuli $B_\beta \setminus \overline{B_\alpha(s)}$ with $0<\alpha<\be$ and $s \in (0,\beta-\alpha)$, where $B_\alpha(s)$ is the open disk of radius $\alpha > 0$ centred at the point $(s,0)$. 
Clearly, for $s>0$, $B_\beta \setminus \overline{B_\alpha(s)}$ is neither symmetric of order $2$ nor centrally symmetric.
The fact that  $\mu_2(B_\beta \setminus \overline{B_\alpha(s)})$ can be greater than $\mu_2(B_\beta \setminus \overline{B_\alpha(0)})$ for certain values of $\al,\be, s$  is observed numerically in \cite{yee}, see \cite[Figures 7, 8]{yee}.
More rigorously, taking, for instance, $\alpha=s=0.25$ and $\beta=1$, we have $\mu_2(B_\beta \setminus \overline{B_\alpha(0.25)}) \geq (1.6446...)^2$, while $\mu_2(B_\beta \setminus \overline{B_\alpha(0)}) = (1.6445...)^2$, see \cite[Table IX]{kuttler}. This establishes a counterexample to the inequality \eqref{eq:SWNN}.

\subsection{Counterexamples to \eqref{eq:SWNNx} and \eqref{eq:SWNNy}}\label{counter:3}
 Consider the rectangle $\Omega_{\text{out}} = \left(-\frac{a}{2},\frac{a}{2}\right)\times \left(-\frac{1}{2a},\frac{1}{2a}\right)$ of unite measure.
 Clearly, $\Omega_{\text{out}}$ is not symmetric of order $4$ provided $a \neq 1$.
 It is well-known that all eigenvalues of the problem \eqref{eq:N} on $\Omega_{\text{out}}$ are given by $\frac{\pi^2 k^2}{a^2}+\pi^2 m^2 a^2$, $k,m \in \mathbb{N}_0$.
 Taking $a=\sqrt{3}$, it is not hard to deduce that 
 \begin{align}
 \label{eq:mu3}
 \mu_3(\Omega_{\text{out}}) = \frac{4\pi^2}{3} \approx 13.1594 > \mu_2(B) = \pi \left(p_{1,1}^{(1)}\right)^2 \approx 10.6499,\\
 \label{eq:mu4}
 \mu_4(\Omega_{\text{out}}) = 3\pi^2 \approx 29.6088 > \mu_4(B) = \pi \left(p_{1,1}^{(2)}\right)^2 \approx 29.3059,
 \end{align}
 where $B$ is a disk of unit measure. These inequalities show that \eqref{eq:SWsym} and \eqref{eq:SWsym4} are not satisfied for $\Omega_{\text{out}}$.	
 On the other hand, it is known that
 \begin{equation}\label{eq:counter2}
 \mu_\kappa(\Omega \setminus \overline{B}_{\alpha}) \to \mu_\kappa(\Omega)
 \quad \text{as}~ \alpha \to 0
 \end{equation}
 for any $\kappa \in \mathbb{N}_0$, see, e.g., \cite[Theorem 3.5 and Corollary 3.6]{daners}. 
 Therefore, 
 combining \eqref{eq:mu3} (reps.~\eqref{eq:mu4}) and \eqref{eq:counter2} (with $\Omega=\Omega_{\text{out}}$ and $\Omega=B_\beta$), we provide a counterexample to \eqref{eq:SWNNx} (resp.~\eqref{eq:SWNNy}) for all sufficiently small $\alpha \geq 0$.

\subsection{Counterexample to \eqref{eq:SWNNz}}
It was observed by \textsc{Hersch} in \cite[Section 5.4]{hersh2} that the inequality \eqref{eq:SWsym4} cannot be extended to the inequality 
$$
\mu_5(\Omega) \leq \mu_4(B)
$$
for domains symmetric of order $4$, 
since this inequality is reversed when $\Omega$ is a square.
Clearly, using the same convergence argument as in the counterexample above, we deduce that the inequality \eqref{eq:SWNNz} also does not hold, in general, if $\Omega_{\textnormal{out}} \setminus \overline{B}_\al$ has only the symmetry of order $4$.

	\section{On the existence and properties of symmetric domains}\label{sec:existence}

In this section, we discuss the existence and certain properties of domains with symmetries as required in Theorem \ref{thm1} and Remark \ref{rem:nonexistence-symmetry}. 
Since radially symmetric domains trivially satisfy all such symmetries, in what follows we will be interested only in nonradial domains. 
It is evident that any domain in any dimension is symmetric of order $1$. 
In the planar case $N=2$, 
nonradial domains with symmetry of order $q$ exist for any $q \geq 2$. 
For example, a regular $q$-sided polygon is symmetric of order $q \geq 3$, and a square is symmetric of order $2$.
Thus, throughout this section, we will be interested mainly in the case $N \geq 3$ and $q \geq 2$. 

Recall that for each $q\in \N$ we denote by $R^{2\pi/q}_{i,j}$ the $2\pi/q$-rotation (in the anti-clockwise direction with respect to the origin) in the coordinate plane $(x_i,x_j)$ with $i<j$, see Definition \ref{def}.
In particular, we will be interested in the cases $q=2,4,8$:
    \begin{align}
    \label{eq:R2}
    &R^{2\pi/2}_{i,j}(x_1,\ldots, x_i,\ldots, x_j,\ldots, x_N)
    =
    (x_1,\ldots, -x_i, \ldots, -x_j,\ldots, x_N),\\
    \label{eq:R4}
    &R^{2\pi/4}_{i,j}(x_1,\ldots, x_i,\ldots, x_j,\ldots, x_N)
    =
    (x_1,\ldots, -x_j, \ldots, x_i,\ldots, x_N),\\
    \label{eq:R8}
    &R^{2\pi/8}_{i,j}(x_1,\ldots, x_i,\ldots, x_j,\ldots, x_N)
    =
    \left(x_1,\ldots, \frac{1}{\sqrt{2}}(x_i-x_j), \ldots, \frac{1}{\sqrt{2}}(x_i+x_j),\ldots, x_N\right).
    \end{align}
The antipodal map will be denoted by $A,$ i.e., $A(x)=-x$.

For $x \in \mathbb{R}^N$, we denote by $\|x\|_p$ the standard $L^p$-norm of $x$, that is, $\|x\|_p := (|x_1|^p + \dots + |x_N|^p)^{1/p}$.

\subsection{Symmetry of order $q$}\label{subsec:q}
Let $\Om$ be a domain which has the symmetry of order $q \in \mathbb{N}$. 
If $q \neq 1,2,4$, then we claim that $\Om$ must be invariant under the entire group $SO(N)$, and hence $\Omega$ is radially symmetric. 
Here, $SO(N)$ stands for the special orthogonal group consisted of all $N \times N$ orthogonal matrices of determinant $1$, which correspond to rotations in $\mathbb{R}^N$.
We denote by $\|T\|$ the operator norm of $T \in SO(N)$ induced by the Euclidean norm $\|\cdot\|_2$. 
The group $SO(N)$ is a real compact Lie group. 

Let us fix some additional notation.
\begin{itemize}
     \item For a subgroup $H$ of $SO(N)$ and for $1\le i<j<k\le N,$ $$
     H_{i,j,k}
     :=
     \left\{
     T \in H:~ 
     T(x)=x ~\text{for every}~ x\in \text{span}\{e_i,e_j,e_k\}^{\perp}\right\}.
     $$
    \item $G_q \subset SO(N)$ is the group generated by the set of rotations $\left\{R_{i,j}^{2\pi/q}: 1\le i<j\le N\right\}$.
\end{itemize}

\begin{proposition}
  Let $\Om$ be a domain in $\mathbb{R}^N$ with $N\ge 3$.
  Let 
  $H:=\{T \in SO(N),$ $T(\Omega)= \Omega\}$. 
  If a natural number $q \ne 1, 2,4$ and $G_q \subset H$, then $H=SO(N)$. 
\end{proposition}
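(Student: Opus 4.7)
The plan is to reduce the question to the classification of closed subgroups of $SO(3)$. First, observe that $H$ is closed in $SO(N)$ (as the stabilizer of a sufficiently regular open set under the continuous action of a compact Lie group), hence a closed Lie subgroup. The final goal is to show that $H$ contains the full one-parameter subgroup $\{R_{i,j}^{\theta}:\theta\in\mathbb{R}\}$ for every pair $i<j$; the conclusion then follows at once, since the infinitesimal generators $E_{i,j}-E_{j,i}$ form a basis of $\mathfrak{so}(N)$, so the closed connected subgroup of $SO(N)$ generated by these one-parameter subgroups coincides with $SO(N)$ itself.

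Fix any triple $1\le i<j<k\le N$ and consider $H_{i,j,k}$, which identifies canonically with a closed subgroup of the copy of $SO(3)$ acting on $\operatorname{span}\{e_i,e_j,e_k\}$ and fixing its orthogonal complement pointwise. Since $G_q\subset H$ and each of the three rotations $R_{i,j}^{2\pi/q},\,R_{i,k}^{2\pi/q},\,R_{j,k}^{2\pi/q}$ fixes $\operatorname{span}\{e_i,e_j,e_k\}^{\perp}$ pointwise, we have that $H_{i,j,k}$ is a closed subgroup of $SO(3)$ containing three $q$-fold rotations whose axes are the three mutually orthogonal coordinate axes. The key step is to show that for $q\notin\{1,2,4\}$, no \emph{proper} closed subgroup of $SO(3)$ can accommodate such a configuration, forcing $H_{i,j,k}=SO(3)$.

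To prove this, I would invoke the classical classification: up to conjugacy, the closed subgroups of $SO(3)$ are $C_n$, $D_n$, $T\cong A_4$, $O\cong S_4$, $I\cong A_5$, $SO(2)$, $O(2)$, and $SO(3)$. In the one-dimensional subgroups $SO(2)$ and $O(2)$, as well as in $C_n$ and $D_n$, every rotation of order $\geq 3$ shares a common main axis, so three mutually orthogonal $q$-fold rotations with $q\geq 3$ cannot coexist. The possible element orders in $T,O,I$ are $\{1,2,3\},\{1,2,3,4\},\{1,2,3,5\}$ respectively, which already rules out $q\geq 6$. For $q=3$, the three-fold axes of $T$ and $O$ are the four body-diagonals of a cube, pairwise making angle $\arccos(1/3)\neq \pi/2$, and the ten three-fold axes of $I$ are likewise never pairwise orthogonal; for $q=5$, only $I$ admits $5$-fold rotations, and its six axes are again not orthogonal. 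The only configuration of three mutually orthogonal $q$-fold axes inside a finite closed subgroup of $SO(3)$ occurs in $O$ precisely when $q=4$, which is the excluded value. Hence $H_{i,j,k}=SO(3)$ for every triple, which (using $N\geq 3$) gives $H\supset\{R_{i,j}^{\theta}:\theta\in\mathbb{R}\}$ for every pair $(i,j)$, finishing the argument.

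The main technical obstacle is the axis-orthogonality verification inside the platonic groups $T,O,I$, needed to exclude $q=3$ and $q=5$. These statements are classical consequences of the explicit geometry of the Platonic solids, but they must either be carried out by direct computation or cited from the standard classification of finite rotation groups. Once these cases are ruled out, the remainder of the proof is a purely formal assembly of the $SO(3)$-classification with the standard fact that elementary Givens rotations generate $SO(N)$.
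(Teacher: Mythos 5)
Your proof is correct, and its skeleton --- closedness of $H$, reduction to the subgroups $H_{i,j,k}$ viewed as closed subgroups of $SO(3)$, and the final Lie-algebra/one-parameter-subgroup generation argument for $N\ge 4$ --- coincides with the paper's. The genuine difference is in the key $SO(3)$ step. The paper invokes the theorem of Radin and Sadun \cite{RS} to conclude that for $q\ne 1,2,4$ the group generated by the coordinate-plane rotations by $2\pi/q$ is already infinite (it contains a free or nontrivially amalgamated free product of finite groups); this excludes all finite closed subgroups of $SO(3)$ in one stroke, so the paper only has to rule out $SO(2)$ and $O(2)$, which it does exactly as you do (two rotations by an angle other than $\pi$ about distinct axes cannot both lie in a group with a single main axis). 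You instead run through the full classification of closed subgroups of $SO(3)$, eliminating $C_n$, $D_n$, $T$, $O$, $I$ by the geometry of their rotation axes: element orders in the polyhedral groups are at most $5$, no two $3$-fold or $5$-fold axes of $T$, $O$, $I$ are orthogonal, and the only orthogonal triple of axes of order $\ge 3$ is the $4$-fold triple in $O$, i.e.\ precisely the excluded value $q=4$. Your route is more elementary and self-contained --- in effect it reproves, for this configuration, the density consequence that the paper imports from \cite{RS} --- at the price of the classical axis computations for the Platonic groups, which you rightly identify as the remaining technical burden and which do check out (the relevant cosines are $\pm\tfrac13$, $\pm\tfrac{\sqrt5}{3}$, $\pm\tfrac{1}{\sqrt5}$, never $0$); the paper's route buys itself out of exactly those computations by citing a stronger, more specialized result.
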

\begin{proof}
We divide the proof into three steps.

\noindent {\bf Step 1.} 
We claim that $H$ is a closed subgroup of $SO(N)$.
First, it is evident that $H$ is a subgroup of $SO(N)$.
Let now $\{T_n\}_{n \in \mathbb{N}}\subset H$ and $T\in SO(N)$ be such that $T_n \to T$, where the convergence is understood in  the operator norm. 
Since $\Omega$ is open, for any $x \in \Omega \setminus\{0\}$ we can choose $\varepsilon>0$ such that $B_\varepsilon(x)\subset \Omega$.
Let $n$ be sufficiently large so that $\|T_n-T\|<\frac{\varepsilon}{\|x\|_2}$.
Thus,
$\|T_n(x)-T(x)\|_2 \le \|T_n-T\|\|x\|_2<\varepsilon$. Moreover, since $T_n$ is an isometry, we have $B_\varepsilon(T_n(x))=T_n(B_\varepsilon(x))$.
Therefore, $T(x)\in B_\varepsilon(T_n(x))=T_n(B_\varepsilon(x))\subset T_n(\Om)=\Om$, which yields $T(\Om)\subset \Om$. 
By the same argument, we also obtain $T^{-1}(\Om)\subset \Om$, and hence $\Om\subset T(\Om)$. 
Consequently, $T(\Omega)=\Omega$, that is, $T\in H$, which implies that $H$ is closed. 

\noindent {\bf Step 2:} 
We claim that the  proposition is true in the case $N=3$.
Notice that the closed-subgroup theorem asserts that $\overline{G_q}$, the closure  of $G_q$, is a Lie subgroup of $SO(3)$.
Since $q\ne 1,2,4$, \cite[Theorem 1, Corollary 2, and Remark on pp.~613-614]{RS} imply that  $G_q$ contains an infinite subgroup which is a free product or a nontrivial amalgamated free product of finite groups, and hence $G_q$ must be also an infinite group.
Therefore, according to the classification of Lie subgroups of $SO(3)$ (see, e.g., \cite[Example 2.4]{AFG}), $\overline{G_q}$ is either $SO(2)$ or $O(2)$ or $SO(3)$.
Since $G_q$ contains two rotations by angles other than $\pi$ degrees about two different axes, we conclude that $\overline{G_q}$ is neither $SO(2)$ nor $O(2)$, and hence $\overline{G_q}=SO(3)$. 
Since $G_q \subset H$, and $H$ is a closed subgroup of $SO(N)$ by Step 1, we deduce that $H=SO(3)$.

\noindent \noindent {\bf Step 3:} 
Finally, we claim that $H=SO(N)$ for any $N \ge 4$.
By the same set of arguments as in Step 2, for each $1\le i<j<k\le N$, we obtain $H_{i,j,k}=SO(N)_{i,j,k}$.
Notice that the Lie algebra of  $SO(N)$ is the set all $N\times N$ skew-symmetric matrices.
Now take skew-symmetric matrices $E_{i,j}$ defined as  $E_{i,j}(e_l)=\delta_{i,l}e_j-\delta_{j,l}e_i$. 
Clearly, $E_{i,j}$ (as well as $E_{i,k}$ and $E_{j,k}$)
lies in the Lie algebra of $H_{i,j,k}$ 
and hence in the Lie algebra of $H$.
Thus, the Lie algebra of $H$ contains the set  $\{E_{i,j}: 1\le i<j<k\le N\}$. 
Since the set $\{E_{i,j}: 1\le i<j<k\le N\}$ spans the set of all $N \times N$ skew-symmetric matrices, we conclude that $H=SO(N)$.
\end{proof}

\subsection{Symmetry of order $2$ and central symmetry}\label{subsec:2}

Observe that in the planar case $N=2$ the symmetry of order $2$ is equivalent to the central symmetry. 
Let us discuss the relation between these two notions for $N\ge 3$. 
For this, taking $p\in [1,\infty]$, we consider the following four sets
\begin{equation}\label{eq:Q}
Q_p
:=
\{x \in \mathbb{R}^N:~ \|x\|_{p} < 1\}, \quad 
Q_p^+
:=
\{x \in Q_p:~ x_1,\dots,x_N > 0\},
\quad
Q_p^-:=-Q_p^+,
\end{equation}
\begin{equation*}
\widetilde{Q}_p^+ 
:= \left\{
x \in Q_p:~ x_1 \cdot \ldots \cdot x_N > 0
\right\},
\end{equation*}
see Figures \ref{fig:1} and \ref{fig:2}.
Notice that the $L^p$-cube $Q_p$ has both the symmetry of order $2$ and the central symmetry, see \eqref{eq:R2} and the definition of the antipodal map $A$.
Moreover, if $p \neq 2$, then $Q_p$ is nonradial.

Assume first that $N \geq 4$ is \textit{even}.
In this case, the domains that are symmetric of order $2$ must be centrally symmetric, as it  follows from the fact that the antipodal map $A$ can be expressed as $A=\prod_{i=1}^{N/2}R^{2\pi/2}_{2i-1,2i}$.

Assume now that $N \geq 3$ is \textit{odd}.
In this case, the symmetry of order $2$ might not imply the central symmetry. 
As an example, 
 for  $\alpha \in (0,1)$, the set $\widetilde{Q}_p^+ \cup
B_\alpha$ is a bounded domain that has the symmetry of order $2$, see \eqref{eq:R2}.
However, $A(\widetilde{Q}_p^+ \cup
B_\alpha) \neq \widetilde{Q}_p^+ \cup
B_\alpha$ since $N$ is odd. 
See Figure \ref{fig:2}.

\begin{figure}[htb]
    \begin{minipage}[t]{.45\textwidth}
        \centering
        \includegraphics[width=0.7\textwidth]{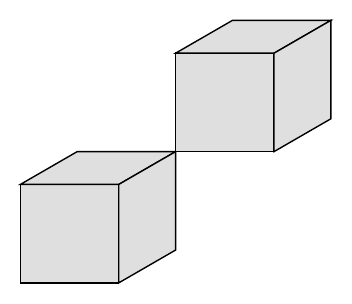}
        \caption{$Q_\infty^+ \cup Q_\infty^-$ for $N=3$.}\label{fig:1}
    \end{minipage}
    \hfill
    \begin{minipage}[t]{.45\textwidth}
        \centering
        \includegraphics[width=0.7\textwidth]{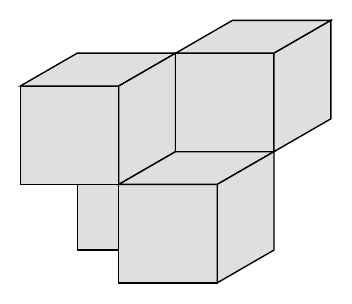}
        \caption{$\widetilde{Q}_\infty^+$ for $N=3$.}\label{fig:2}
    \end{minipage} 
\end{figure}

\medskip
In the following lemma, we demonstrate that, in general, the central symmetry does not imply the symmetry of order $2$ in any dimension $N \geq 3$.
Here, we will denote by $B_\alpha(x)$ an open $N$-ball of radius $\alpha$ centred at $x \in \mathbb{R}^N$.

\begin{lemma}\label{lem:sym2}
	Let $N \geq 3$ and $p=\infty$.
	Let $\xi=(1,0,\dots,0)$.
	Then, for any sufficiently small $\alpha \in (0,1)$, $Q_p^+\cup Q_p^-\cup B_\al(0) \cup B_\al(\xi) \cup B_\al(-\xi)$ is a domain which has the central symmetry, but does not have the symmetry of order 2.
\end{lemma}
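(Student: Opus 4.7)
The plan is to analyse the candidate domain $\Omega := Q_p^+ \cup Q_p^- \cup B_\alpha(0) \cup B_\alpha(\xi) \cup B_\alpha(-\xi)$ by verifying three items in turn: that $\Omega$ is a domain, that $\Omega$ is centrally symmetric, and that $\Omega$ fails to have the symmetry of order $2$.

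Since $\Omega$ is a union of open sets, openness is automatic. For connectedness, I would observe that $B_\alpha(0)$ meets both orthant cubes $Q_p^\pm = \pm(0,1)^N$ (an explicit witness is any point with small equal-sign coordinates), and similarly $B_\alpha(\pm\xi)$ meets $Q_p^\pm$ in a neighbourhood of the respective vertex $\pm\xi$; these overlaps glue the five pieces into a single connected set. Central symmetry then follows by inspection: the antipodal map $A$ interchanges $Q_p^+$ with $Q_p^-$, fixes $B_\alpha(0)$, and interchanges $B_\alpha(\xi)$ with $B_\alpha(-\xi)$, so $A(\Omega)=\Omega$.

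The substantive step, which I expect to be the main point of the argument, is to rule out the symmetry of order $2$. Since $N \geq 3$, I would select the rotation $R := R^{2\pi/2}_{2,3}$, which by \eqref{eq:R2} flips only the signs of $x_2$ and $x_3$. Applying $R$ to the centre $x^\ast := (\tfrac12,\dots,\tfrac12) \in Q_p^+$ produces
\[
y := R(x^\ast) = \left(\tfrac12,-\tfrac12,-\tfrac12,\tfrac12,\dots,\tfrac12\right) \in R(\Omega),
\]
and the plan is to show $y \notin \Omega$ for all sufficiently small $\alpha$. Since $y$ has coordinates of mixed sign, $y \notin Q_p^+ \cup Q_p^-$; a short computation gives $\|y\|_2 = \|y-\xi\|_2 = \tfrac{\sqrt{N}}{2}$ and $\|y+\xi\|_2 = \tfrac{\sqrt{N+9}}{2}$, each of which is bounded below by $\tfrac{\sqrt{3}}{2}$. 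Choosing $\alpha < \tfrac{\sqrt{3}}{2}$ then excludes $y$ from all three balls, and consequently $R(\Omega) \neq \Omega$.

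There is no serious obstacle here: the key design choice is simply to place the off-centre balls at points that are fixed by the antipodal map up to interchange but moved non-trivially by some $R^{2\pi/2}_{i,j}$ with $i,j \geq 2$. Once that choice is fixed, both the positive claim (central symmetry) and the negative claim (no symmetry of order $2$) follow from straightforward sign and distance checks.
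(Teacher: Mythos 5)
Your verification that $\Omega$ is a connected open set and that $A(\Omega)=\Omega$ is fine, and your computation showing $R^{2\pi/2}_{2,3}(\Omega)\neq\Omega$ is essentially correct (up to a harmless slip: $\|y+\xi\|_2=\tfrac{\sqrt{N+8}}{2}$, not $\tfrac{\sqrt{N+9}}{2}$). However, the negative part of your argument has a genuine gap. According to Definition \ref{def}, $\Omega$ is symmetric of order $2$ if \emph{there exists an isometry} $T$ such that $R^{2\pi/2}_{i,j}T(\Omega)=T(\Omega)$ for all $1\le i<j\le N$; the paper's standing convention that $T$ is the identity is explicitly suspended in Section \ref{sec:existence}, where this lemma lives, precisely because the relation between symmetry classes is at stake there. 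You have only shown that the choice $T=\mathrm{id}$ fails (and only for the single pair $(i,j)=(2,3)$), which does not exclude the possibility that some rotated or translated copy $T(\Omega)$ is invariant under all the half-turns $R^{2\pi/2}_{i,j}$.

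Ruling out every isometry $T$ is the actual content of the paper's proof and requires several further ideas: one first shows that any admissible $T$ must fix the origin (both $\Omega$ and $T(\Omega)$ have centroid at the origin, by central symmetry and Proposition \ref{Orthogonality00} respectively), hence $T$ is orthogonal; one then uses the two points $\pm u$ with $u=(1,\dots,1)$, the unique points of maximal Euclidean norm $\sqrt{N}$ in $\overline{\Omega}$, to show that $T$ must send the main diagonal of the cube to a coordinate axis; finally, the geometrically distinguished boundary points $\pm(1+\alpha)e_1$ (the only boundary points at distance $1+\alpha$ from the origin where $\partial\Omega$ has nonzero constant mean curvature) force $T^{-1}(e_1)$ to be simultaneously a coordinate direction and the diagonal direction, a contradiction. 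None of this is present in your proposal, so as written the proof does not establish the lemma.
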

\begin{proof}
    Let $\Omega:=Q_p^+\cup Q_p^-\cup B_\al(0) \cup B_\al(\xi) \cup B_\al(-\xi)$.
	It is easy to see that $\Omega$ is a centrally symmetric domain.
	Let us prove that $\Omega$ is not symmetric of order $2$. 
	Suppose, by contradiction, that  there exists an  isometry $T$ such that $R^{2\pi/2}_{i,j} T(\Omega) = T(\Omega)$ for every $1\le i<j \leq N$. 
	Since $\Omega$ is centrally symmetric and $T(\Omega)$ is symmetric of order $2$, we deduce from Proposition \ref{Orthogonality00} that their centroids (centres of mass) coincide with the origin. 
	Since the isometry $T$ maps the centroid of $\Omega$ to the centroid of $T(\Omega)$, we deduce that $T$ must be  an orthogonal transformation. 
	
	Evidently, $\Om$ is  symmetric of order $2$ if and only if $\overline{\Omega}$ is symmetric of order $2$. 
 	Since $p=\infty$ and $\alpha \in (0,1)$ is small enough, we have $\|x\|_2 \leq \sqrt{N}$ for any $x \in \overline{\Omega}$,  and equality holds if and only if  $x=\pm u$, where $u=(1,\dots,1)$.
 	Since there are only two such points and $T$ is an orthogonal transformation, for each $1\le i<j \leq N$ the transformation  $T^{-1} R^{2\pi/2}_{i,j} T$ either fixes $u$, or maps $u$ to $-u$.
	Let us denote $v=T(u)$ and show that either $v=\sqrt{N}e_i$ or $v=-\sqrt{N}e_i$ for some $i \in \{1,\dots,N\}$. 
 	In other words, the axis $e_i$ is mapped by $T^{-1}$ to the  diagonal $\{t u: t \in \mathbb{R}\}$
	of the cube $Q_p$.
	Denote 
	\begin{align*}
		S_+ &=
		\left\{
		R^{2\pi/2}_{i,j}:~ 
		1 \leq i < j \leq N,~ 
		R^{2\pi/2}_{i,j} v = v
		\right\},\\
		S_- &=
		\left\{
		R^{2\pi/2}_{i,j}:~ 
		1 \leq i < j \leq N,~ 
		R^{2\pi/2}_{i,j} v = -v
		\right\}.
	\end{align*}
	Notice that, if $R^{2\pi/2}_{i,j} \in S_+$, then $v_i=0$ and $v_j=0$, while if  $R^{2\pi/2}_{i,j} \in S_-$, then $v_k=0$ for any $k \neq i,j$, see \eqref{eq:R2}. 
	Since $v\ne 0$, we have  $S_+,S_- \neq \emptyset$. 
	Assume, without loss of generality, that $R^{2\pi/2}_{1,2}\in S_{-}$. 
	Now, if $R^{2\pi/2}_{1,3}\in S_{-}$, then we get
	either $v=\sqrt{N}e_1$ or $v=-\sqrt{N}e_1$, while if $R^{2\pi/2}_{1,3}\in S_{+}$, then we get either  $v=\sqrt{N}e_2$ or $v=-\sqrt{N}e_2$. 
	For convenience, assume that $v=\sqrt{N}e_1$.
	In particular, we have
	$R_{1,j}^{2\pi/2}v=-v$ for any $1<j \leq N$, and $R_{i,j}^{2\pi/2}v=v$ for any $1< i < j \leq N$.

	Consider now the point $\omega = (1+\alpha,0,\dots,0)$. It is easy to see that $\omega$ and $-\omega$ are the  only  points on $\partial \Omega$ 
	with the properties that their Euclidean norm is $1+ \alpha$ and the mean curvature of $\partial \Omega$ at these points is a nonzero constant. Consequently, we must have $T^{-1}R_{i,j}^{2\pi/2}T(\omega)=\pm\omega$ for any $1 \leq i < j\le N$.
 	Recalling now that $R_{i,j}^{2\pi/2}T(u)=T(u)$ for any $1<i<j\le N$, we obtain
 	 $$
 	 0<\omega\cdot u= R_{i,j}^{2\pi/2}T(\omega)\cdot R_{i,j}^{2\pi/2}T(u)=R_{i,j}^{2\pi/2}T(\omega)\cdot T(u)=T^{-1}R_{i,j}^{2\pi/2}T(\omega)\cdot u.
 	 $$
	Therefore, $T^{-1}R_{i,j}^{2\pi/2}T(\omega)=\omega$ for any $1<i<j\le N$, which yields $T(\omega)_i = 0$ for  $i \in \{2,\dots,N\}$. 
	Thus, we have $T(\omega)=(1+\al)e_1$, and hence
	$$
	w=(1+\al)T^{-1}(e_1)=(1+\al)\frac{1}{\sqrt{N}}u,
	$$
	 which is impossible.
\end{proof}

In conclusion, when $N \geq 4$ is even, the symmetry of order $2$ implies the central symmetry, but not vice versa, i.e., the central symmetry is a weaker notion.
When $N \geq 3$ is odd, the symmetry of order $2$ and the central symmetry are independent notions. 

\begin{figure}[h!]
	\center{\includegraphics[width=0.35\linewidth]{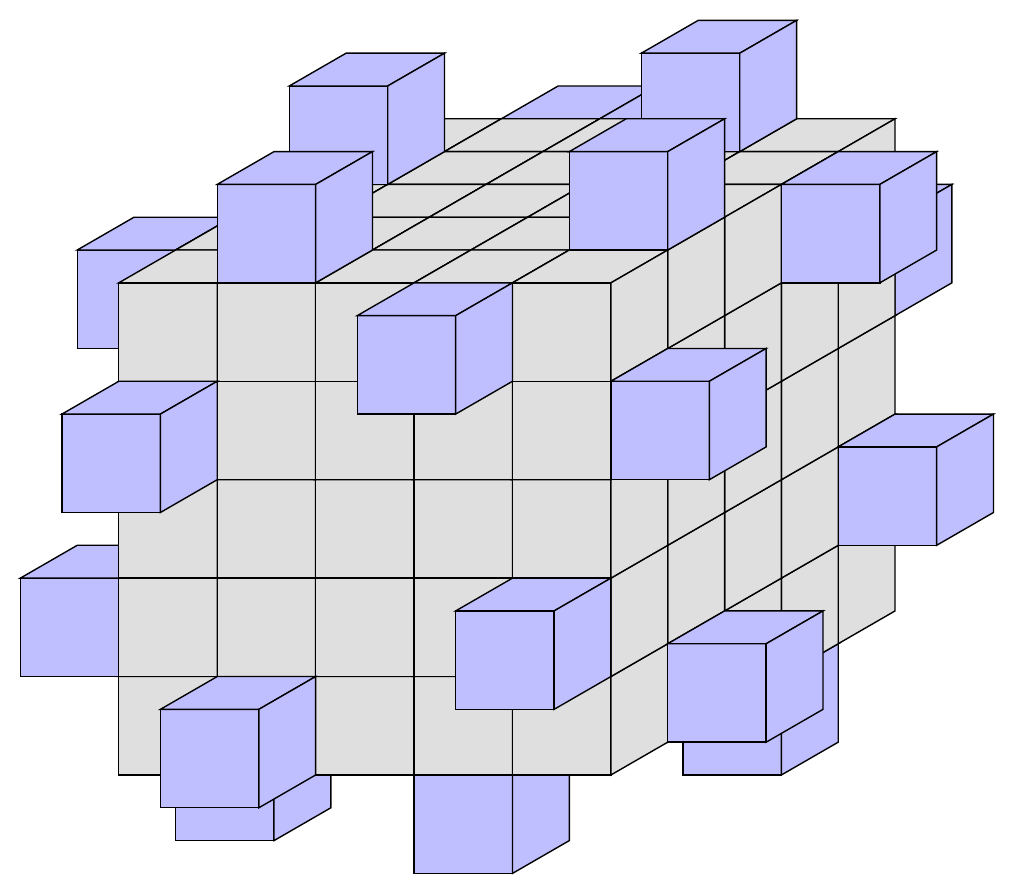}}
	\caption{A set in $N=3$ which is symmetric of order $4$ but not centrally symmetric.}
	\label{fig:3}
\end{figure}

\subsection{Symmetry of order $4$}
For $p\in[1,\infty]$ with $p\ne 2$,  $Q_p$ defined in \eqref{eq:Q} is a nonradial domain having the symmetry of order $4$. 
Indeed, applying any rotation $R_{i,j}^{2\pi/4}$ (see \eqref{eq:R4}), we deduce that $\|R_{i,j}^{2\pi/4} (x)\|_{p} = \|x\|_{p}$, and hence $R_{i,j}^{2\pi/4} (Q_p) \subset Q_p$. 
This implies that $R_{i,j}^{2\pi/4} (Q_p) = Q_p$ since $R_{i,j}^{2\pi/4}$ is an isometry.

It is interesting to mention that the symmetry of order $4$ might not imply the central symmetry if $N$ is odd, as indicated by an example for $N=3$ depicted in Figure \ref{fig:3}.

\subsection{Symmetry of order $8$}
Consider the class of domains which are symmetric of order $8$ with respect to \textit{some} coordinate plane $(x_k,x_l)$ and symmetric of order $4$ with respect to all other coordinate planes, as was discussed in Remark \ref{rem:nonexistence-symmetry}. 
Let us denote this class as $C_{8,4}$.
We show that $C_{8,4}$ coincides with the class of domains symmetric of order $8$ (with respect to all coordinate planes), and hence any $\Omega \in C_{8,4}$ is radially symmetric, see Section \ref{subsec:q}.
Indeed, let us take any $\Omega \in C_{8,4}$, fix any $m \neq k, l$, and show that $\Omega$ is symmetric of order $8$ with respect to $(x_l,x_m)$.
Assuming, for the sake of clarity, that $1<k<l<m<N$, we deduce that for any $x \in \Omega$, 
\begin{align*}
&R_{k,m}^{2\pi/4}(x_1,\dots,x_k,\dots,x_l,\dots,x_m,\dots,x_N) 
= 
(x_1,\dots,-x_m,\dots,x_l,\dots,x_k,\dots,x_N),
\\
&R_{k,l}^{2\pi/8}(x_1,\dots,-x_m,\dots,x_l,\dots,x_k,\dots,x_N) 
\\
&= 
\left(x_1,\dots, -\frac{1}{\sqrt{2}}(x_l+x_m), \dots, \frac{1}{\sqrt{2}}(x_l-x_m),\dots,x_k,\dots, x_N\right),
\\
&\left(R_{k,m}^{2\pi/4}\right)^{-1}\left(x_1,\dots, -\frac{1}{\sqrt{2}}(x_l+x_m), \dots, \frac{1}{\sqrt{2}}(x_l-x_m),\dots,x_k,\dots, x_N\right)
\\
&=
\left(x_1,\dots, x_k, \dots, \frac{1}{\sqrt{2}}(x_l-x_m),\dots,\frac{1}{\sqrt{2}}(x_l+x_m),\dots, x_N\right)
\\
&=R_{l,m}^{2\pi/8}(x_1,\dots,x_k,\dots,x_l,\dots,x_m,\dots,x_N).
\end{align*}
That is, $\left(R_{k,m}^{2\pi/4}\right)^{-1}R_{k,l}^{2\pi/8}R_{k,m}^{2\pi/4}(x) = R_{l,m}^{2\pi/8}(x)$, and hence $R_{l,m}^{2\pi/8}(x) \in \Omega$, which establishes the claim.

\section{Final comments and open problems}\label{sec:final}

Let us list some of the natural questions and remarks related to the discussion of this paper. 

\begin{enumerate}
	\item 
	In the planar case $N = 2$, is it possible to obtain similar inequalities as in Theorem \ref{thm1} for eigenvalues with higher indices ($\geq 6$) by imposing higher symmetry assumptions on the domain?
	Notice that in our proof we  used the constant extensions of \textit{nonradial} eigenfunctions of \eqref{eq:N} on $B_\beta \setminus \overline{B}_\al$.
	However, there are also  \textit{radial} eigenfunctions, which might disturb the ``proper'' index counting.
	For instance, if $N=2$ and $\alpha \geq 0$ is sufficiently small, then already $\mu_{6}(B_\beta \setminus \overline{B}_\alpha)$ might have a one-dimensional eigenspace of radial eigenfunctions.
	\item In the higher-dimensional case $N \geq 3$, is it possible to establish results similar to those of Theorem \ref{thm1} for domains with other symmetries  (e.g., symmetry groups of platonic solids except of the symmetry of order $4$) or for domains of the form $\Om_1 \times \Om_2$, where  $\Om_1$ and $\Om_2$ are domains symmetric of order $q_1$ and $q_2$, respectively?
    In the case of surfaces of platonic solids in $\mathbb{R}^3$, we refer the reader to \cite{EP3,hersch3} for estimates on $\mu_2$ and further discussion. (Notice that the only platonic solid with unknown effective lower bound on $\mu_2$ is the dodecahedron.)
	\item It is natural to wonder which domains maximize higher Neumann eigenvalues (with index $\geq 4$) if no symmetry assumptions are imposed. 
	In this regard, we refer the reader to \cite{Abele,AO1} for numerical results on the maximization of higher Neumann eigenvalues with respect to domains of equal volume.
	\item The counterexamples to \eqref{eq:SWNNx}, \eqref{eq:SWNNy}, \eqref{eq:SWNNz} obtained in Section \ref{sec:counterexample} show \textit{certain} optimality of the obtained results in the planar case. However, the actual optimality would follow from the construction of counterexamples to these inequalities in the classes of domains whose order of symmetry is reduced by one, i.e., for domains symmetric of order $3$ in the case of \eqref{eq:SWNNx}, \eqref{eq:SWNNy}, and of order $7$ in the case of \eqref{eq:SWNNz}. 
	(Recall that \eqref{eq:SWsym} \textit{is} satisfied for planar domains symmetric of order $3$ under the additional simply-connectedness assumption.)
	\item Recall that the assumption that $B_\alpha$ must be contained in $\Omega_{\text{in}}$ appears only in the proof of Proposition \ref{prop:bound}. 
	Can this assumption be relaxed? 
	In particular, one could wonder whether $B_\alpha \subset \Omega_\text{in}$ can be replaced by the equality of measures $|B_{\alpha}|= |\Omega_\text{in}|$. 
	Numerical experiments with domains depicted on Figure \ref{fig:contr} indicate that if $\Omega_\text{in}$ is not connected (and does not contain zero), then the assumption $|B_{\alpha}|= |\Omega_\text{in}|$
	might not lead to the inequality \eqref{eq:SWNN}. 
\begin{figure}[h!]
	\center{\includegraphics[width=0.45\linewidth]{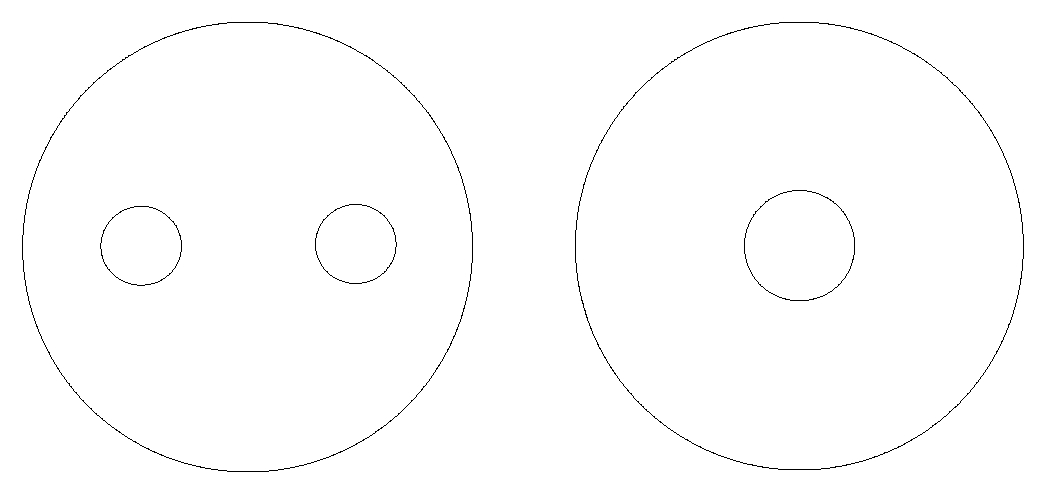}}
	\caption{On the left: a domain $\Omega$ with three circular boundaries. The outer disk is of radius $1$, the inner disks are of radius $\sqrt{2}/8$, the distance between centres of the inner disks is $3/5+\sqrt{2}/4$, $\Omega$ is symmetric of order $2$, and $\mu_2(\Omega) \approx 2.74$.
	On the right: $B_\beta \setminus \overline{B}_\alpha$ with $\beta=1$, $\alpha=1/4$, and $\mu_2(B_\beta \setminus \overline{B}_\alpha) \approx 2.70$.}
	\label{fig:contr}
\end{figure}	
	
	\item 
	If $\Omega$ has an appropriate smoothness (say, Lipschitz), then  equality holds in \eqref{eq:SWNN}, \eqref{eq:SWNNx}, \eqref{eq:SWNNy}, \eqref{eq:SWNNz} if and only if $\Omega = B_\beta \setminus \overline{B}_\al$.
	The necessary part follows since
	$|\Omega| = |B_\beta \setminus 
\overline{B}_\alpha|$, and
	$\Omega \subset B_\beta \setminus 
\overline{B}_\alpha$ in case of equality, see the end of the proof of Proposition \ref{prop:bound}.
\end{enumerate}

\appendix
\section{}\label{sec:appendix2}

In this section, we prove Lemmas \ref{lem:n+2} and \ref{lem:v}  stated in Section \ref{sec:spectrum}.
Throughout the section, we always assume $0 \leq \alpha < \beta$.
First, we prove Lemma \ref{lem:v}.
\begin{proof}[Proof of Lemma \ref{lem:v}]
	We start by showing that $v'$ does not vanish on $(\al, \be).$
	For convenience, let us rewrite \eqref{eq:ode} as
	\begin{equation}\label{eq:ode2}
	v''(r)+
	\frac{N-1}{r} v'(r)
	=
	\left(\frac{l(l+N-2)}{r^2}-\mu_{l,1}\right)v(r), 
	\quad  r\in (\al,\be).
	\end{equation}
	Suppose, by contradiction, that $v'(\gamma)=0$ for some $\ga\in (\al,\be)$, and hence
	\begin{equation*}
	v''(\ga)
	=
	\left(\frac{l(l+N-2)}{\ga^2}-\mu_{l,1}\right)v(\ga).
	\end{equation*}
	Assume first that $v''(\ga)\ge 0$.
	Since $v$ is positive, we have $\frac{l(l+N-2)}{\ga^2}-\mu_{l,1}\ge 0$, and hence, multiplying \eqref{eq:ode2} by $r^{N-1}$, we obtain
	$$
	(r^{N-1}v'(r))'=\left(\frac{l(l+N-2)}{r^2}-\mu_{l,1}\right)v(r)r^{N-1}>0, \quad  r\in (\al,\ga).
	$$ 
	That is, $r^{N-1}v'(r)$ is strictly increasing on $(\al,\ga)$, which is impossible since $v'(\al)=	0$. 
	
	Assume now that $v''(\ga)< 0$.
	In this case, we get $\frac{l(l+N-2)}{\ga^2}-\mu_{l,1}< 0.$ 
	Thus, we see from \eqref{eq:ode2}, as above, that
	$$
	(r^{N-1}v'(r))'=\left(\frac{l(l+N-2)}{r^2}-\mu_{l,1}\right)v(r)r^{N-1}<0, \quad  r\in [\ga,\be).
	$$ 
	Consequently, $r^{N-1}v'(r)$ is strictly decreasing on $[\ga,\be)$, which gives a contradiction since $v'(\be)=0$.
	
	Let us show now that $v'>0$ on $(\alpha,\beta)$.
	Since $v'$ does not vanish in $(\al,\be)$, we have either $v'(r)>0$ or $v'(r)<0$ for every $r\in(\al,\beta).$
	Suppose, by contradiction, that $v'(r)<0$  for every $r\in(\al,\beta)$. 
	In particular, this yields  $v(\al)>0$ and $v''(\al)\le 0$, since $v$ is positive and $v'(\al)=0$. 
	Therefore, from \eqref{eq:ode2} we obtain
	\begin{equation*}
	0 \geq 
	v''(\al)
	=
	v''(\al)+ \frac{N-1}{\al}v'(\al) = \left(\frac{l(l+N-2)}{\al^2}-\mu_{l,1}\right)v(\al),
	\end{equation*}
	and hence $\frac{l(l+N-2)}{\al^2}-\mu_{1,1} \le 0.$ 
	Again from \eqref{eq:ode2} we get
	\begin{equation*}
	(r^{N-1}v'(r))'
	=
	\left(\frac{l(l+N-2)}{r^2}-\mu_{l,1}\right)v(r)r^{N-1}
	<
	\left(\frac{l(l+N-2)}{\al^2}-\mu_{l,1}\right)v(r)r^{N-1} \le 0
	\end{equation*}
	for any $r \in (\alpha,\beta)$.
	Thus, $r^{N-1}v'(r)$ is strictly decreasing on $(\al,\be)$, which is impossible since  $v'(\be)=0$.
	Hence, we proved that $v'(r)>0$ for every $r\in(\al,\beta)$.
	
	Let us now prove the inequality \eqref{eq:long}.
	Multiplying \eqref{eq:ode2} by $v$, we get
	$$
	v''(r)v(r) = - \frac{N-1}{r} v'(r)v(r) + \left(\frac{l(l+N-2)}{r^2} - \mu_{l,1}\right) v^2(r),
	\quad  r \in (\alpha,\beta).
	$$
	Recalling that $v$ is positive, increasing, and $v'(\beta)=0$, we see that $v''(\beta) \leq 0$, which yields
	$$
	v''(\beta)v(\beta) =  \left(\frac{l(l+N-2)}{\beta^2} - \mu_{l,1}\right) v^2(\beta) \leq 0.
	$$
	If $\frac{l(l+N-2)}{\beta^2} - \mu_{l,1} = 0$, then $\frac{l(l+N-2)}{r^2} - \mu_{l,1} > 0$ for all $r \in (\alpha,\beta)$, and hence  \eqref{eq:long} follows trivially.
	On the other hand, if $\frac{l(l+N-2)}{\beta^2} - \mu_{l,1} < 0$, then 
	$$
	0 \geq \left(\frac{l(l+N-2)}{r^2} - \mu_{l,1}\right) v^2(r) 
	> 
	\left(\frac{l(l+N-2)}{\beta^2} - \mu_{l,1}\right) v^2(\beta)
	$$
	for all those $r \in (\alpha,\beta)$ for which $\frac{l(l+N-2)}{r^2} - \mu_{l,1} \leq 0$, since $v$ is increasing. That is, \eqref{eq:long} is satisfied for such $r$.
	For the remaining values of $r$, the inequality \eqref{eq:long} is trivial since the left-hand side of \eqref{eq:long} is positive, while the right-hand side is negative.
\end{proof}

Prior to the proof of Lemma \ref{lem:n+2}, we need to establish several auxiliary facts. 

\begin{lemma}\label{lem:2ev}
	Let $v$ be an  eigenfunction corresponding to the eigenvalue $\mu_{0,2}$  of the SL problem \eqref{eq:ode}, \eqref{eq:bcN}. Then $v'$ does not vanish on $(\al, \be)$.
\end{lemma}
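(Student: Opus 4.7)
The plan is to exploit the divergence form of the ODE specialised to $l=0$, namely
\begin{equation*}
(r^{N-1} v'(r))' = -\mu_{0,2}\, r^{N-1} v(r),\quad r \in (\al,\be),
\end{equation*}
which is \eqref{eq:ode3} with $l=0$, together with the Sturm--Liouville oscillation count recalled after \eqref{Neumann evs}: because $v$ corresponds to the \emph{second} eigenvalue of the $l=0$ problem, $v$ has exactly one zero $\ga \in (\al,\be)$. Without loss of generality, I would assume $v > 0$ on $(\al,\ga)$ and $v < 0$ on $(\ga,\be)$.

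With this in hand, the derivative of the flux $r^{N-1}v'(r)$ has sign opposite to that of $v(r)$, so $r^{N-1}v'(r)$ is strictly decreasing on $(\al,\ga)$ and strictly increasing on $(\ga,\be)$. The Neumann boundary data $v'(\al)=v'(\be)=0$ anchor this flux at the endpoints, forcing $r^{N-1}v'(r) < 0$ on each of the two open subintervals. Hence $v'(r) < 0$ on $(\al,\ga) \cup (\ga,\be)$.

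The only remaining point is $\ga$ itself. If $v'(\ga) = 0$, then together with $v(\ga)=0$ the uniqueness theorem for the linear ODE \eqref{eq:ode} would give $v \equiv 0$, contradicting the fact that $v$ is an eigenfunction. Thus $v'(\ga) \neq 0$, and by continuity together with the strict negativity on both sides of $\ga$ we obtain $v'(\ga) < 0$. This yields $v' < 0$ throughout $(\al,\be)$, which is stronger than the claimed nonvanishing.

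Since the whole argument is driven by the interplay between the divergence form of \eqref{eq:ode3} and the endpoint Neumann conditions, I do not anticipate any serious obstacle; the most delicate point is merely to justify the uniqueness step at $\ga$, which is standard. Note that this argument is essentially an easier version of the one used in the proof of Lemma \ref{lem:v}, but the presence of the sign change of $v$ (which has no analogue there, since in Lemma \ref{lem:v} one deals with a first eigenfunction) is what is traded here for the restriction to $l=0$.
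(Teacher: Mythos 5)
Your proof is correct and follows essentially the same route as the paper: both arguments integrate the divergence form $(r^{N-1}v')' = -\mu_{0,2}\,r^{N-1}v$ from the endpoints, using the Neumann conditions and the single sign change of $v$ to conclude $v'<0$ on $(\al,\be)$. The only difference is cosmetic: your extra uniqueness argument at $\ga$ is not needed, since the strict monotonicity of the flux $r^{N-1}v'$ on $(\al,\ga]$ (equivalently, the paper's integral identity evaluated at $s=\ga$) already forces $v'(\ga)<0$.
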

\begin{proof}
	We know from the Sturm-Liouville theory that $v$ vanishes exactly once in $(\al,\be).$ 
	Let  $\ga\in (\al,\be)$ be such that $v(\ga)=0$.
	Assume, without loss of generality, that $v>0$ in $(\al,\ga)$ and $v<0$ in $(\ga,\be)$. 
	Taking any $s\in (\al,\ga]$ and integrating \eqref{eq:ode3} 
	(with $l=0$) 
	from $\al$ to $s$, we  obtain
	$$
	-s^{N-1}v'(s)=\mu_{0,2}\int_\al^s v(r)r^{N-1} \, dr>0.
	$$
	Similarly, for any $s\in [\ga,\be)$, we get 
	$$
	s^{N-1}v'(s)=\mu_{0,2}\int_s^\be v(r)r^{N-1} \, dr<0.
	$$
	Thus, $v'(s)<0$ for all $s\in (\al,\be)$. 
\end{proof}

Let us now consider the Sturm-Liouville eigenvalue problem \eqref{eq:ode}  
with the Dirichlet boundary conditions
\begin{equation}\label{eq:bcD}
u(\alpha)=0 \quad \text{and} \quad u(\beta)=0.
\end{equation}
The corresponding eigenvalues $\la_{l,j}$ have the following Courant-Fischer variational characterisation:
\begin{equation}\label{eq:ladef}
\la_{l,j}=\min_{X\in \mathcal{X}_j}\max_{u\in X\setminus \{0\}}R_l(u),
\quad l\in \mathbb{N}_0, ~j \in \mathbb{N},
\end{equation}
where $\mathcal{X}_j$ is the collection of all $j$-dimensional subspaces of $H^1_0((\alpha,\beta);r^{N-1})$, and $R_l$ is defined as in \eqref{eq:RQ}.
Comparing the variational characterizations \eqref{eq:mudef} and \eqref{eq:ladef}, it can be shown that 
\begin{equation}\label{eq:mutaulambda}
\mu_{l,j}<  \la_{l,j} 
\quad \text{for all}~ l\in \mathbb{N}_0,~ j \in \mathbb{N}. 
\end{equation}

\begin{proposition}\label{prop:mu2tau2}
	We have 
	$\mu_{0,2}=\la_{1,1}$, and hence $\mu_{1,1}<\mu_{0,2}$.
\end{proposition}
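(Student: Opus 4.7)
The plan is to observe that differentiating the radial SL equation for $l=0$ produces the SL equation for $l=1$, and this transforms Neumann boundary data into Dirichlet boundary data. More precisely, I would proceed as follows.

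First I would take an eigenfunction $v$ of the Neumann SL problem \eqref{eq:ode}--\eqref{eq:bcN} with $l=0$ corresponding to $\mu_{0,2}$, i.e.,
\begin{equation*}
-v''(r) - \frac{N-1}{r}v'(r) = \mu_{0,2}\, v(r), \quad r \in (\alpha,\beta),
\end{equation*}
with $v'(\alpha)=v'(\beta)=0$. Differentiating this equation with respect to $r$ gives
\begin{equation*}
-v'''(r) - \frac{N-1}{r}v''(r) + \frac{N-1}{r^2}v'(r) = \mu_{0,2}\, v'(r),
\end{equation*}
which, setting $u := v'$, is exactly the SL equation \eqref{eq:ode} with $l=1$ and eigenvalue $\mu_{0,2}$. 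The Neumann conditions on $v$ become Dirichlet conditions $u(\alpha)=u(\beta)=0$ on $u$. Hence $u$ is an eigenfunction of the Dirichlet SL problem \eqref{eq:ode}, \eqref{eq:bcD} with $l=1$ corresponding to the eigenvalue $\mu_{0,2}$.

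Next I would invoke Lemma \ref{lem:2ev}, which asserts that $v'$ does not vanish on $(\alpha,\beta)$; thus $u=v'$ is of constant sign. By the Sturm--Liouville theory recalled in Section \ref{sec:spectrum}, the only Dirichlet eigenfunctions of constant sign are those associated with the first eigenvalue $\lambda_{1,1}$. Therefore $\mu_{0,2}=\lambda_{1,1}$.

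Finally, combining this identity with the strict inequality \eqref{eq:mutaulambda} applied at $(l,j)=(1,1)$, namely $\mu_{1,1} < \lambda_{1,1}$, yields $\mu_{1,1}<\mu_{0,2}$, as required. The only nontrivial step is the invocation of Lemma \ref{lem:2ev} to ensure $v'$ is of one sign, but this has already been established; the rest is a direct differentiation argument, so I expect no genuine obstacle here.
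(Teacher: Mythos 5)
Your proposal is correct and follows exactly the paper's own argument: differentiate the $l=0$ Neumann SL equation so that $u=v'$ solves the $l=1$ Dirichlet problem with the same eigenvalue, use Lemma \ref{lem:2ev} to see $u$ has constant sign and hence corresponds to $\lambda_{1,1}$, and conclude via \eqref{eq:mutaulambda}. No gaps.
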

\begin{proof}
	Let $v$ be an eigenfunction of the SL problem \eqref{eq:ode}, \eqref{eq:bcN} corresponding to $\mu_{0,2}$. 
	Differentiating \eqref{eq:ode} with $l=0$, we deduce that $u=v'$ satisfies the
	equation \eqref{eq:ode} with $l=1$ under the Dirichlet boundary conditions \eqref{eq:bcD}, namely,
	\begin{equation}\label{eq:SLD}
	-u''(r)-\frac{N-1}{r}u'+\frac{N-1}{r^2}u
	=
	\mu_{0,2}u \quad \text{in}~ (\al,\be),
	\quad u(\al)=u(\be)=0.
	\end{equation}
	By Lemma \ref{lem:2ev}, $v'$ does not vanish on $(\al,\be)$, and hence $u$ must be the first eigenfunction of \eqref{eq:SLD}. 
	Therefore, we conclude that $\mu_{0,2}=\la_{1,1}$, and \eqref{eq:mutaulambda} yields $\mu_{1,1}<\mu_{0,2}$.
\end{proof}

Now we are ready to prove Lemma \ref{lem:n+2}.
\begin{proof}[Proof of Lemma \ref{lem:n+2}]
	Since the inequality $\mu_{1,1} < \mu_{2,1}$ is given by \eqref{eqn:col}, we only need to show that $\mu_{2,1}<\mu_{0,2}$. 	
	Assume, without loss of generality, that $\beta=1$, and let us write $\mu_{l,k} = \mu_{l,k}(\alpha)$ and $\lambda_{l,k} = \lambda_{l,k}(\alpha)$ to stress the dependence on $\alpha$.
	
	First, we consider the case $\alpha=0$, i.e., the case of the ball. 
	It is known that $\mu_{2,1}(0) = \left(p_{\frac{N}{2},1}^{(2)}\right)^2$ and $\mu_{0,2}(0) = \left(p_{\frac{N}{2},2}^{(0)}\right)^2$, see Section \ref{sec:spectrum}.  
	Moreover, we have $p_{\nu,k}^{(0)} = j_{\nu,k}$, 
	where $j_{\nu,k}$ is the $k$-th positive zero of the Bessel function $J_\nu$,
	see, e.g., \cite[p.~549]{LS}.
	Therefore, applying the estimates 
	$$
	p_{\frac{N}{2},1}^{(2)} < \sqrt{2N+8}
	\quad \text{and} \quad
	j_{\frac{N}{2},2} > \frac{N}{2} + \frac{3\pi+1}{2}
	$$
	from \cite[(1)]{LS} and \cite[(4.6)]{IS}, respectively, it is not hard to deduce that the corresponding bounds are strictly ordered, 
	which yields the desired inequality $\mu_{2,1}(0)<\mu_{0,2}(0)$. 
	
	Second, we assume $\alpha>0$.
	Notice that $\mu_{2,1}(\alpha)$ is nonincreasing with respect to $\alpha$.
	Indeed, denoting by $v$ the first positive eigenfunction of the SL problem \eqref{eq:ode}, \eqref{eq:bcN} associated to $\mu_{2,1}(\alpha)$, we have
	\begin{equation}\label{eq:muprime-repers}
	\frac{\partial \mu_{2,1}(\alpha)}{\partial \alpha} 
	= 
	-v^2(\alpha) (2N \alpha^{N-3} - \mu_{2,1}(\alpha)\alpha^{N-1}) 
	= 
	-v(\alpha) v''(\alpha)\alpha^{N-1} \leq 0,
	\end{equation}
	where the first equality is given by, e.g., \cite[Theorem 4.1, 1]{KZ}, and the second equality follows from \eqref{eq:ode} and the fact that $v'(\alpha)=0$. 
	The inequality in \eqref{eq:muprime-repers} is implied by $v''(\alpha) \geq 0$ which, in its turn, is a consequence of $v(\alpha) \geq 0$, $v'(\alpha)=0$, and $v'>0$ on $(\alpha,\beta)$ by Lemma \ref{lem:v}.	
	On the other hand, in view of the domain monotonicity of the Dirichlet eigenvalues, $\lambda_{1,1}(\alpha)$ is increasing with respect to $\alpha$, and hence the same holds for $\mu_{0,2}(\alpha)$ by Proposition \ref{prop:mu2tau2}.
	Therefore, in view of the convergence \eqref{eq:mulj-conv}, we conclude that
	\begin{equation*}\label{eq:lower2}
	\mu_{2,1}(\alpha) \leq \mu_{2,1}(0) < \mu_{0,2}(0) <
	\mu_{0,2}(\alpha) \quad \text{for any}~ \alpha \in (0,1),
	\end{equation*}
	which completes the proof.
\end{proof}

\section{}\label{sec:appendix}

In this section, we provide several integral equalities and orthogonality results which will be needed in the proof of Theorem \ref{thm1} to show the orthogonality of elements of the set $X_k$ in $L^2(\Omega)$ and $H^1(\Omega)$.

\subsection{Auxiliary integral equalities}\label{sec:Integrals}
We start by proving several integral equalities for domains with symmetries imposed in Theorem \ref{thm1}. 
\begin{proposition}\label{2Orthogonality00}
  Let $\Om\subset\mathbb{R}^2$ be a bounded domain.
  Let $g$ be a positive radial function on $\mathbb{R}^2$. 
  Then for $i,j \in\{1,2\}$ with $i\ne j$ and any $m \in \mathbb{N}_0$ the following assertions hold:
  \begin{enumerate}[label={\rm(\roman*)}]
      \item\label{2Orthogonality00:1} If $\Omega$ is centrally symmetric (or, equivalently, symmetric of order $2$), then
      \begin{equation}\label{eq:2proport1}
      \int_\Omega g(r)x_ix_j^{2m}\,dx=0
      \quad \text{and} \quad 
      \int_{\Omega} g(r) x_i^{2m+1}\,dx = 0.
      \end{equation}
      \item\label{2Orthogonality00:2} If $\Omega$ is symmetric of order $4$, then
      \begin{align}
      \label{eq:2proport0}
      \int_\Omega g(r)x_ix_j\,dx=0
      \quad \text{and} \quad 
      \int_{\Omega} g(r)x_i^{2m}\,dx
      =
      \int_{\Omega} g(r)x_j^{2m}\,dx.
      \end{align}
      Moreover, there exists $T\in SO(2)$ such that 
      \begin{equation}\label{eq:2proport3}
      \int_{T(\Omega)} g(r)x_ix_j^{3}\,dx
      =
      0.
      \end{equation}
      \item\label{2Orthogonality00:3} If $\Omega$ is symmetric of order $8$, then we have  
      \begin{align}\label{eq:2proport2}
       \int_\Omega g(r)x_ix_j^{3}\,dx=0
       \quad \text{and} \quad 
     \int_\Omega g(r) \left(x_i^2-x_j^2\right)^2 \,dx
	=
	4\int_\Omega g(r) x_i^2 x_j^2\,dx.
      \end{align}
  \end{enumerate}
  \end{proposition}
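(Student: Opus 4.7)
The plan is to pass to polar coordinates $x=(r\cos\phi,r\sin\phi)$, write each monomial integrand as $r^{a+b}$ times a finite Fourier polynomial in $\phi$, and use the following one-line observation. If $\Om$ is symmetric of order $q$, then for every integer $k$ that is \emph{not} a multiple of $q$,
\begin{equation*}
 \int_\Om g(r)\,r^{a+b}\cos(k\phi)\,dx=\int_\Om g(r)\,r^{a+b}\sin(k\phi)\,dx=0.
\end{equation*}
Indeed, the substitution $\phi\mapsto\phi+2\pi/q$ leaves $\Om$, the radial factor $g(r)$, and the area element $r\,dr\,d\phi$ invariant, while rotating the planar vector $(\cos k\phi,\sin k\phi)$ by the angle $2\pi k/q\notin 2\pi\mathbb{Z}$; invariance of the integrals then forces both to vanish. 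This single fact will drive all three parts.

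For (i), the integrands in \eqref{eq:2proport1} are homogeneous of odd total degree $2m+1$, so their angular expansions contain only odd harmonics, and $q=2$ makes every odd $k$ satisfy $k\not\equiv 0\pmod 2$. (Equivalently, the integrand is odd under $x\mapsto-x$, which preserves $\Om$.) In (ii), the identity $x_ix_j=\tfrac12 r^2\sin 2\phi$ together with $k=2\not\equiv 0\pmod 4$ yields the vanishing of $\int_\Om g(r)x_ix_j\,dx$, while the equality of $\int_\Om g(r)x_i^{2m}\,dx$ and $\int_\Om g(r)x_j^{2m}\,dx$ is immediate from the change of variable $y=R^{2\pi/4}_{i,j}x$, which leaves $\Om$ and $g(r)\,dx$ invariant and sends $x_i^{2m}$ to $x_j^{2m}$.

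The subtler part is \eqref{eq:2proport3}. Expanding
\begin{equation*}
 x_1x_2^3=\frac{r^4}{8}\bigl(2\sin 2\phi-\sin 4\phi\bigr),
\end{equation*}
the $\sin 2\phi$ harmonic vanishes under $4$-symmetry, but the $\sin 4\phi$ harmonic need not. Writing $T_\theta$ for the rotation by angle $\theta$ and changing variables, one computes
\begin{equation*}
 F(\theta):=\int_{T_\theta(\Om)} g(r)\,y_1 y_2^3\,dy=-\frac{1}{8}\bigl(I_s\cos 4\theta+I_c\sin 4\theta\bigr),
\end{equation*}
where $I_s,I_c$ denote the $\sin 4\phi,\cos 4\phi$ moments of $g(r)r^4$ over $\Om$. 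Since $F$ is a single sinusoid in $4\theta$, there exists $\theta^*$ with $F(\theta^*)=0$, and the desired isometry is $T=T_{\theta^*}$.

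For (iii), the $8$-fold symmetry eliminates every harmonic with $k\not\equiv 0\pmod 8$; in particular both terms in the decomposition of $x_1x_2^3$ above die, giving the first equality in \eqref{eq:2proport2}. For the second, a direct expansion yields
\begin{equation*}
 (x_1^2-x_2^2)^2-4x_1^2x_2^2=r^4\cos 4\phi,
\end{equation*}
and $k=4\not\equiv 0\pmod 8$ kills this integral as well. The only genuinely non-automatic step is the auxiliary rotation in \eqref{eq:2proport3}, which is needed precisely because the $\sin 4\phi$ moment of a generic $4$-symmetric domain is nonzero; its existence reduces to the elementary fact that a nonzero linear combination of $\cos 4\theta$ and $\sin 4\theta$ always has a zero.
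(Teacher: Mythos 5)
Your proof is correct. The mechanism is the same as the paper's (rotation invariance of $\Omega$, $g(r)$, and $dx$), but the packaging is genuinely different: the paper substitutes $x=R^{2\pi/q}_{1,2}(y)$ into each monomial separately and reads off sign relations such as $I=-I$, whereas you prove one uniform lemma — for a $q$-symmetric domain every angular harmonic $\cos(k\phi),\sin(k\phi)$ with $q\nmid k$ has vanishing $g(r)r^{a+b}$-moment, because the vector of the two moments is fixed by a nontrivial planar rotation and hence is zero — and then just record the Fourier expansion of each monomial. This buys a transparent explanation of \emph{why} the auxiliary rotation $T$ is unavoidable in \eqref{eq:2proport3} (the $k=4$ harmonics are exactly the ones a $4$-symmetric domain does not kill) and why $8$-fold symmetry removes them in \eqref{eq:2proport2}. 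For \eqref{eq:2proport3} itself the two arguments diverge slightly: the paper shows $I(\pi/4)=-I(0)$ for $I(\theta)=\int_{R^\theta_{1,2}(\Omega)}g(r)x_ix_j^3\,dx$ and invokes continuity (an intermediate value argument), while you identify $F(\theta)$ explicitly as $-\tfrac18(I_s\cos4\theta+I_c\sin4\theta)$, a pure sinusoid in $4\theta$, whose vanishing at some $\theta^*$ is immediate; your version is a bit more explicit (it even locates the zeros) but proves the same existence statement. One cosmetic point: your sinusoid computation is written for $x_1x_2^3$; the case $x_2x_1^3$ follows from the same $\theta^*$ since $x_1x_2^3+x_2x_1^3=r^2x_1x_2$ has vanishing $g$-moment over any $4$-symmetric rotate of $\Omega$ by the first identity in \eqref{eq:2proport0}, so the two integrals are negatives of each other — worth a sentence, though the paper is equally terse on this.
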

  
  \begin{proof}
  \ref{2Orthogonality00:1} 
 By the central symmetry of $\Omega$,  the transformation $x=-y$ (or, equivalently, $x=R^{2\pi/2}_{1,2}(y)$) yields
  $$
  \int_{\Omega} g(r)x_i x_j^{2m}\,dx=-\int_{\Omega} g(r)y_i y_j^{2m}\,dy
  \quad 
  \text{and}
  \quad 
  \int_{\Omega} g(r) x_i^{2m+1}\,dx = -\int_{\Omega} g(r) y_i^{2m+1}\,dy,
  $$
  which implies \eqref{eq:2proport1}.
  
  \ref{2Orthogonality00:2} By the symmetry of order 4, the transformation $x=R^{2\pi/4}_{1,2}(y)$ (see \eqref{eq:R4}) yields
  \begin{equation}\label{eqn:2d1}
    \int_{\Omega} g(r)x_i x_j^{2m+1}\,dx=-\int_{\Omega} g(r)y_j y_i^{2m+1}\,dy
    \quad \text{and} \quad  
    \int_{\Omega} g(r)x_i^{2m}\,dx=\int_{\Omega} g(r)y_j^{2m} \,dy,
  \end{equation} 
  and hence \eqref{eq:2proport0} follows. 
  
 Consider now the map $I(\theta)= \int_{R_{1,2}^{\theta}(\Om)}g(r)x_i x_j^{3}\,dx$ for $0\le \theta \le \pi/4$.
Using the transformation $x=R^{2\pi/8}_{1,2}(y)$ (see \eqref{eq:R8}) and applying \eqref{eqn:2d1}, we obtain
\begin{equation}\label{eq:rot}
    \begin{aligned}
    I(\pi/4)
    &=
    \int_{R^{2\pi/8}_{1,2}(\Omega)} g(r)x_i x_j^{3} \,dx
    =
    \frac{1}{4}\int_{\Omega} g(r)(y_i- y_j) (y_i+y_j)^{3}\,dy\\
    &=
    \frac{1}{4}\int_{\Omega} g(r) (y_i^4+2y_i^3y_j-2y_iy_j^3-y_j^4)\,dy=-\int_{\Omega} g(r)y_i y_j^3 \,dy=-I(0).
\end{aligned}
\end{equation}
  Since $I$ is continuous, we conclude that either $I(0)=0$, or $I(\theta)=0$ for some $0< \theta <\pi/4$, which establishes \eqref{eq:2proport3}. 

  \ref{2Orthogonality00:3} Since $\Om$ is symmetric of order 8, we have $R^{2\pi/8}_{1,2}(\Om)=\Om$. 
  Thus, arguing as in the assertion \ref{2Orthogonality00:2} above, we deduce from \eqref{eq:rot} that $I(0)=-I(0)$, which yields $\int_{\Omega} g(r)x_i x_j^3 \,dx=0$. 
  Furthermore, by applying the transformation $x = R^{2\pi/8}_{1,2}(y)$ (see \eqref{eq:R8}) we easily derive the second equality in \eqref{eq:2proport2}. 
\end{proof}

\begin{remark}\label{rem:lem:N2}
	The presence of rotation $T$ in Proposition \ref{2Orthogonality00} \ref{2Orthogonality00:2} is not avoidable, in general, as a simple example when $\Omega$ is a square and $g=1$ shows. 
	However, clearly, $T(\Omega)$ is also symmetric of order $4$ and $\mu_k(T(\Omega)) = \mu_k(\Omega)$ for any $k \in \mathbb{N}$.
	Thus, in applications of Proposition \ref{2Orthogonality00} \ref{2Orthogonality00:2} we will often assume, without loss of generality, that $T$ is the identity, i.e., that \eqref{eq:2proport3} holds for $\Omega$ itself.
\end{remark}

\begin{proposition}\label{Orthogonality00}
  Let $\Om\subset\mathbb{R}^N$ be a bounded domain, $N \geq 3$. 
  Let $g$ be a positive radial function on $\mathbb{R}^N$. 
  Then for any $i,j\in \{1,2,\ldots, N\}$ with $i\neq j$ and any $m\in \mathbb{N}_0$ the following assertions hold:
  \begin{enumerate}[label={\rm(\roman*)}]
      \item\label{Orthogonality00:1} If $\Omega$ is centrally symmetric, then
      \begin{equation*}\label{eq:proport1}
      \int_\Omega g(r)x_ix_j^{2m}\,dx=0
      \quad \text{and} \quad 
      \int_{\Omega} g(r) x_i^{2m+1}\,dx = 0.
      \end{equation*}
       \item\label{Orthogonality00:2} If $\Omega$ is symmetric of order $2$, then 
      \begin{equation}\label{eq:3proport1}
      \int_\Omega g(r)x_ix_j^{m}\,dx=0
      \quad \text{and} \quad 
      \int_{\Omega} g(r) x_i^{2m+1}\,dx = 0.
      \end{equation}
        \end{enumerate}
  \end{proposition}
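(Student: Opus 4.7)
The plan is to reduce both assertions to elementary change-of-variables arguments, completely analogous to those already used in the proof of Proposition \ref{2Orthogonality00}, but exploiting the extra room available in dimension $N\ge 3$.

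For part \ref{Orthogonality00:1}, I would apply the antipodal map $x = -y$, under which $g(r)$ is invariant since it is radial. The integrand $g(r) x_i x_j^{2m}$ transforms into $g(r)(-y_i)(-y_j)^{2m} = -g(r)\,y_i y_j^{2m}$, and the integrand $g(r)x_i^{2m+1}$ transforms into $-g(r)\,y_i^{2m+1}$. The central symmetry assumption gives $-\Omega = \Omega$ (recall the convention that the translation $\zeta$ in Definition \ref{def2} is the zero vector), so each integral equals its own negative and hence vanishes.

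For part \ref{Orthogonality00:2}, the key observation is that since $N \geq 3$, for any pair $i \ne j$ in $\{1,\dots,N\}$ one can choose a third index $k \in \{1,\dots,N\} \setminus \{i,j\}$. The rotation $R^{2\pi/2}_{i,k}$, described by \eqref{eq:R2}, sends $x_i \mapsto -x_i$, $x_k \mapsto -x_k$, and leaves the remaining coordinates, in particular $x_j$, unchanged. Therefore, under the substitution $x = R^{2\pi/2}_{i,k}(y)$, the integrand $g(r) x_i x_j^m$ transforms into $-g(r)\,y_i y_j^m$ for every $m \in \mathbb{N}_0$, while $g(r) x_i^{2m+1}$ transforms into $-g(r)\,y_i^{2m+1}$. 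Since $R^{2\pi/2}_{i,k}(\Omega) = \Omega$ by the symmetry of order $2$, both integrals in \eqref{eq:3proport1} equal their own negatives, and hence vanish.

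There is no real obstacle here; the main conceptual point is simply that in dimension $N \geq 3$ the symmetry of order $2$ yields, for each pair $(i,j)$, an auxiliary involution that flips $x_i$ while preserving $x_j$, which is precisely what fails in the planar case $N = 2$ (where the only nontrivial order-$2$ rotation simultaneously flips both coordinates). This is why the stronger conclusion of \ref{Orthogonality00:2}, containing $x_i x_j^m$ for arbitrary $m$, is available for $N \geq 3$ but not for $N = 2$, and why the planar case is treated separately in Proposition \ref{2Orthogonality00}.
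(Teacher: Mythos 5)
Your proposal is correct and follows essentially the same route as the paper: part \ref{Orthogonality00:1} via the antipodal map, and part \ref{Orthogonality00:2} via the substitution $x=R^{2\pi/2}_{i,k}(y)$ for an auxiliary index $k\notin\{i,j\}$, which is exactly the paper's argument. Your closing remark on why this auxiliary involution is unavailable when $N=2$ correctly identifies the reason for the separate planar statement.
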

\begin{proof}

  \ref{Orthogonality00:1} The proof follows exactly as in Proposition \ref{2Orthogonality00} \ref{2Orthogonality00:1}.

 \ref{Orthogonality00:2}
 Since $N\ge 3,$ we can choose $k$ so that $k\neq i,j$. 
 Using the transformation $x=R^{2\pi/2}_{i,k}(y)$, we obtain
 $$
 \int_{\Omega} g(r)x_i x_j^{m}\,dx=-\int_{\Omega} g(r)y_i y_j^{m}\,dy,
 $$
 and hence the first equality in \eqref{eq:3proport1} follows.
 The second equality in \eqref{eq:3proport1} can be shown as in Proposition \ref{2Orthogonality00} \ref{2Orthogonality00:1}.
 \end{proof}

\begin{proposition}\label{Orthogonality0}
  Let $\Om\subset\mathbb{R}^N$ be a bounded domain symmetric of order $4$, $N \geq 3$.
  Let $g$ be a positive radial function on $\mathbb{R}^N$.
  Then the following assertions hold:
\begin{enumerate}[label={\rm(\roman*)}]
\item\label{Orthogonality0:2} 
For any $i,j,k,l\in \{1,2,\ldots, N\}$ with $i\ne j$ and $k,l \not\in \{i,j\}$, and any $m,n\in \mathbb{N}_0$, we have
$$
\int_\Omega g(r) x_i x_j x_k^{m} x_l^n \,dx=0.
$$
\item\label{Orthogonality0:3} There exist constants $A,B > 0$ such that 
$$
\int_\Omega g(r)x_i^2 \,dx=A
~\text{ and }~
\int_\Omega g(r)x_i^4 \,dx=B 
~\text{ for any }~
i \in \{1,2,\dots,N\}.
$$
\item\label{Orthogonality0:4} 
There exists a constant $C > 0$ such that  
$$
\int_\Omega g(r)x_i^2 x_j^2\,dx=C
~\text{ for any }~
i,j \in \{1,2,\dots,N\}
~\text{ with }~ 
i \neq j.
$$
\end{enumerate}
\end{proposition}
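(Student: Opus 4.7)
The plan is to prove all three assertions via a change of variables in each integral, using the invariance $R_{i,j}^{2\pi/4}(\Omega) = \Omega$ coming from the symmetry of order $4$, combined with the radial invariance $g(|R_{i,j}^{2\pi/4}(x)|) = g(|x|)$. Recall from \eqref{eq:R4} that $R_{i,j}^{2\pi/4}$ sends $x_i \mapsto -x_j$ and $x_j \mapsto x_i$ while leaving every other coordinate fixed. The strategy, for each integral, is simply to identify the correct pair of indices whose rotation either flips the sign of the monomial (forcing the integral to vanish) or permutes one monomial to another (forcing two integrals to coincide).

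For assertion \ref{Orthogonality0:2}, the assumption $k, l \notin \{i,j\}$ ensures that the factor $x_k^{m} x_l^{n}$ is untouched by $R_{i,j}^{2\pi/4}$. Since $x_i x_j \mapsto (-x_j)(x_i) = -x_i x_j$, the change of variables $x = R_{i,j}^{2\pi/4}(y)$ gives
\[
\int_\Omega g(r)\, x_i x_j x_k^m x_l^n \, dx = -\int_\Omega g(r)\, x_i x_j x_k^m x_l^n \, dx,
\]
which is zero. Assertion \ref{Orthogonality0:3} follows from the same rotation: for any $i \neq j$ and $m \in \mathbb{N}_0$ the change of variables yields $\int_\Omega g(r)\, x_i^{2m}\, dx = \int_\Omega g(r)\, x_j^{2m}\, dx$. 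Taking $m = 1$ and $m = 2$ produces the constants $A > 0$ and $B > 0$.

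For assertion \ref{Orthogonality0:4}, I will show that $\int_\Omega g(r)\, x_i^2 x_j^2\, dx$ takes the same value for every unordered pair $\{i,j\}$ with $i \neq j$. If two pairs $\{i, j\}$ and $\{i', j'\}$ share a common index, say $i = i'$ (so $j, j'$ are distinct from $i$ and from each other), then the rotation $R_{j,j'}^{2\pi/4}$ fixes $x_i$ and maps $x_j^2 \mapsto x_{j'}^2$, giving $\int g(r)\, x_i^2 x_j^2\, dx = \int g(r)\, x_i^2 x_{j'}^2\, dx$. In the case of disjoint pairs (which requires $N \geq 4$), I chain two such one-index-shared comparisons through an intermediate pair, namely $\{i,j\} \rightsquigarrow \{i, j'\} \rightsquigarrow \{i', j'\}$, to obtain the common value $C$. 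Strict positivity of $A$, $B$, $C$ is immediate from $g > 0$ and the open set $\Omega$ having positive measure away from the corresponding coordinate hyperplanes.

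The argument is essentially mechanical once the appropriate rotation has been chosen in each case; I do not foresee any genuine obstacle, since the symmetry of order $4$ provides exactly the rotations needed to equate or cancel the monomials in question.
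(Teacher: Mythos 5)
Your proposal is correct and follows essentially the same route as the paper: each assertion is obtained by the change of variables $x=R^{2\pi/4}_{i,j}(y)$ exploiting the invariance of $\Omega$ and the radiality of $g$, with the sign flip $x_ix_j\mapsto -x_ix_j$ for (i), the coordinate swap for (ii), and a two-step comparison for disjoint pairs in (iii) (the paper composes the two rotations in a single substitution, while you chain through an intermediate pair — an immaterial difference).
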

\begin{proof}
\ref{Orthogonality0:2} 
Since $i \neq j$ and $k,l \not\in \{i,j\}$, we use the transformation  $x=R^{2\pi/4}_{i,j}(y)$ (see \eqref{eq:R4}) to obtain
$$
\int_{\Omega} g(r)x_i x_j x_k^m x_l^n \,dx
=
-\int_{\Omega} g(r)y_iy_j y_k^{m}y_l^n \,dy,
$$
and hence the result follows.

\ref{Orthogonality0:3} 
Fixing any $i\ne 1$ and taking $x=R^{2\pi/4}_{1,i}(y)$, we obtain
$$
\int_\Omega g(r)x_i^2 \,dx
= 
\int_\Omega g(r)y_1^2 \,dy
\quad
\text{and}
\quad
\int_\Omega g(r)x_i^4 \,dx
= 
\int_\Omega g(r)y_1^4 \,dy,
$$
which yields the existence of the required constants $A,B>0$.

\ref{Orthogonality0:4} 
Assume first that
$k,l \in \{1,2,\dots,N\}$ are such that $k \neq l$ and $\{k,l\} \neq \{i,j\}$, but $\{k,l\} \cap \{i,j\} \neq \emptyset$. 
Without loss of generality, we assume $k=i$ and $l \neq j$.
Then, applying the transformation  $x=R^{2\pi/4}_{j,l}(y)$, we obtain
$$
\int_\Omega g(r)x_i^2 x_j^2\,dx=\int_\Omega g(r)y_k^2 y_l^2 \,dy.
$$
Assume now that $k,l$ are such that $k \neq l$ and $\{k,l\} \cap \{i,j\} = \emptyset$.
Then, applying the transformation  $x=R^{2\pi/4}_{i,k}\big(R^{2\pi/4}_{j,l}(y)\big)$, we get
$$
\int_\Omega g(r)x_i^2 x_j^2\,dx=\int_\Omega g(r)y_k^2 y_l^2 \,dy.
$$
The combination of these two cases gives the existence of the required constant $C>0$.
\end{proof}

Finally, we provide several algebraic identities that we use in the proof of Theorem \ref{thm1}. 
\begin{lemma}\label{lem:algebraic}
	For any $x=(x_1,x_2,\ldots, x_N)\in \mathbb{R}^N$ we have the following identities:
	\begin{align}
		&2 \sum_{i=1}^{N-1} \sum_{j=i+1}^N  x_i^2 x_j^2 
		+
		\sum_{i=1}^{N-1} \frac{1}{i(i+1)} \left(\sum_{j=1}^i x_j^2 - i x_{i+1}^2\right)^2
		= 
		\frac{N-1}{N}r^4, \label{eq:alg4}\\
		&\sum_{i=1}^{N-1}\frac{1}{i(i+1)} \left(\sum_{j=1}^i x_j^2 + i^2 x_{i+1}^2\right)= \frac{N-1}{N} r^2,\label{eq:alg12}\\
		&\sum_{i=1}^{N-1} \sum_{j=i+1}^N (x_i^2+x_j^2)= (N-1)r^2. \label{eq:alg2}
	\end{align}
\end{lemma}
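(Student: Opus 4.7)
The plan is to verify each of the three identities by reducing them to standard symmetric-function computations in the variables $y_i := x_i^2$.

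For identity \eqref{eq:alg2}, I would observe that in the double sum $\sum_{i<j}(x_i^2+x_j^2)$ each variable $x_k^2$ appears exactly $N-1$ times (once for every other index it can be paired with), so the sum equals $(N-1)\sum_k x_k^2 = (N-1)r^2$. This is a one-line argument.

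For identity \eqref{eq:alg12}, I would compute the coefficient of each $y_k=x_k^2$ in the left-hand side. The term $y_k$ appears inside $\sum_{j=1}^i y_j$ for all $i\geq k$, and appears as $i^2 y_{i+1}$ only when $i=k-1$ (if $k\geq 2$). Using the telescoping identity $\sum_{i=k}^{N-1}\frac{1}{i(i+1)} = \frac{1}{k}-\frac{1}{N}$, the coefficient of $y_k$ for $k\geq 2$ becomes $\frac{1}{k}-\frac{1}{N}+\frac{(k-1)^2}{(k-1)k} = \frac{1}{k}-\frac{1}{N}+\frac{k-1}{k} = \frac{N-1}{N}$, and for $k=1$ it is $1-\frac{1}{N} = \frac{N-1}{N}$; the case $k=N$ yields $\frac{(N-1)^2}{(N-1)N}=\frac{N-1}{N}$ as well. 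Summing gives $\frac{N-1}{N}r^2$.

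For identity \eqref{eq:alg4}, the key observation is that the vectors $u_i := \frac{1}{\sqrt{i(i+1)}}(\underbrace{1,\ldots,1}_{i},-i,0,\ldots,0) \in \mathbb{R}^N$ for $i=1,\ldots,N-1$ form an orthonormal basis for the orthogonal complement of $\frac{1}{\sqrt{N}}(1,\ldots,1)$. Setting $y=(y_1,\ldots,y_N)=(x_1^2,\ldots,x_N^2)$, the inner product $y\cdot u_i$ equals exactly $\frac{1}{\sqrt{i(i+1)}}(\sum_{j=1}^i y_j - iy_{i+1})$, so Parseval/Pythagoras yields
\begin{equation*}
\sum_{i=1}^{N-1}\frac{1}{i(i+1)}\Bigl(\sum_{j=1}^i x_j^2 - ix_{i+1}^2\Bigr)^2
= |y|^2 - \frac{(y\cdot(1,\ldots,1))^2}{N} = \sum_{i=1}^N x_i^4 - \frac{r^4}{N}.
\end{equation*}
Combining this with the elementary relation $2\sum_{i<j}x_i^2 x_j^2 = r^4 - \sum_i x_i^4$ (obtained by expanding $r^4 = (\sum_i x_i^2)^2$) gives the left-hand side of \eqref{eq:alg4} equal to $r^4 - \frac{r^4}{N} = \frac{N-1}{N}r^4$, as required.

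There is no real obstacle; the only mildly subtle point is recognising the orthonormal-basis structure underlying identity \eqref{eq:alg4}, which makes the computation trivial and avoids a tedious expansion of the squared sums.
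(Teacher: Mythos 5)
Your proof is correct, but it takes a different route from the paper: the authors dispose of Lemma \ref{lem:algebraic} in one line by induction on the dimension $N$, whereas you verify each identity directly. Your counting argument for \eqref{eq:alg2} and the telescoping computation $\sum_{i=k}^{N-1}\tfrac{1}{i(i+1)}=\tfrac1k-\tfrac1N$ for \eqref{eq:alg12} are both sound (and correctly handle the boundary cases $k=1$ and $k=N$). The real gain is your treatment of \eqref{eq:alg4}: recognising that the vectors $u_i=\tfrac{1}{\sqrt{i(i+1)}}(1,\dots,1,-i,0,\dots,0)$ together with $\tfrac{1}{\sqrt N}(1,\dots,1)$ form an orthonormal basis of $\mathbb{R}^N$ and applying Parseval to $y=(x_1^2,\dots,x_N^2)$ turns the squared-sum expansion into the identity $\sum_i (y\cdot u_i)^2=\sum_k x_k^4-\tfrac{r^4}{N}$, which combines cleanly with $2\sum_{i<j}x_i^2x_j^2=r^4-\sum_k x_k^4$. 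This is not only shorter than an inductive expansion but also explains \emph{why} the factor $\tfrac{N-1}{N}$ appears, and it ties the identity back to the Gram--Schmidt construction of $\widetilde{Z_3}$ already used in Appendix \ref{sec:Orthogonal}; the paper's induction, by contrast, requires no structural insight but hides the mechanism. Either argument is acceptable.
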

\begin{proof}
	The proof follows easily by the induction with respect to the dimension $N$.
\end{proof}

\subsection{Bases of $H_1$ and $H_2$}\label{sec:Orthogonal}
In this section, we provide an auxiliary information on the sets of homogeneous harmonic polynomials $H_1$ and $H_2$.
Let $\gamma$ be a multi-index, i.e, 
$$
\gamma=(\gamma_1,\gamma_2,\ldots, \gamma_N),~  \gamma_i\in \mathbb{N}_0.
$$ 
For such $\gamma$ we use the following standard notation:
\begin{align*}
	|\gamma|= \gamma_1+\ldots +\gamma_N,
	\quad
	\gamma! = \gamma_1!\ldots \gamma_N!,
	\quad
	x^\gamma =x_1^{\gamma_1}\ldots x_N^{\gamma_N}.
\end{align*}
For $p=\displaystyle\sum_{|\gamma|= k}a_{\gamma} x^{\gamma}\in H_k$ and $q=\displaystyle\sum_{|\gamma|= l}b_{\gamma} x^{\gamma}\in H_l$, we define
\begin{equation}\label{ip}
	\inpr{p,q}=
	\left\{
	\begin{aligned}
		&\frac{1}{l!}\sum_{|\gamma|=l}\gamma!a_\gamma b_\gamma &&\text{ if } k=l,\\
		&0  &&\text{ if } k\ne l.
	\end{aligned}
	\right.
\end{equation} 
In particular, for any monomials $x^\gamma,x^\beta \in H_l$ we have 
\begin{equation}\label{ip:mon}
\langle x^\gamma,x^\be \rangle=
\left\{
\begin{aligned}
	&\frac{\gamma!}{l!} &&\text{ if } \gamma=\be,\\
	&0 &&\text{ if } \gamma\neq \be.
\end{aligned}
\right.
\end{equation}
It is not hard to verify that $\left<\cdot,\cdot\right>$ is an inner product on $\cup_{l=0}^\infty H_l$,
see, e.g., \cite[Chapter 5]{axler}.

Let us now determine orthogonal bases for $H_1$ and $H_2$.
Consider the following sets of homogeneous harmonic polynomials:
\begin{align*}
	Z_1&:= 
	\left\{
	x_i:~ 
	i = 1,2,\ldots, N 
	\right\},\\
	Z_2&:= 
	\left\{
	x_i x_j:~ 
	i<j \text{ and } i,j = 1,2,\ldots, N
	\right\},\\
	Z_3&:= 
	\left\{
	x_i^2- x_{i+1}^2:~ 
	i = 1,2,\ldots, N-1 
	\right\}.
\end{align*}
Clearly, $Z_1 \subset H_1$ and $Z_2,Z_3 \subset H_2$. 
Moreover, we deduce from \eqref{ip} and \eqref{ip:mon} that
\begin{equation}\label{eq:<p,p>}
	\inpr{p,p}
	=
	\left\{
	\begin{aligned}
		&1 &&\text{for }~ p\in Z_1,\\
		&\frac{1}{2} &&\text{for }~  p\in Z_2, \\
		&2 &&\text{for }~  p\in Z_3,
	\end{aligned}
	\right. 
\end{equation}
and if $p\ne q$, then 
\begin{equation}\label{eq:intpq}
	\inpr{p,q}
	=
	\left\{
	\begin{aligned}
		&-1 &&\text{if }  p,q \in Z_3 \text{ have a common index},\\
		&0 &&\text{otherwise}.
	\end{aligned}
	\right.
\end{equation}

Combining the mutual orthogonality of elements of $Z_1$ given by \eqref{eq:intpq} with the fact that $\# (Z_1) = \text{dim}\, H_1 = N$ (see Proposition \ref{prop:multiplicity}), we deduce that $Z_1$ is an orthonormal basis of $H_1$.
Since $\# (Z_2 \cup Z_3) = \text{dim}\, H_2 = \frac{(N+2)(N-1)}{2}$, it is not hard to see that $Z_2 \cup Z_3$ is a basis of $H_2$.
However, the elements of $Z_3$ are not always orthogonal with each other, see \eqref{eq:intpq}.
Applying the Gram-Schmidt orthogonalization procedure to $Z_3$ with respect to the inner product \eqref{ip}, we obtain the following subset of $H_2$: 
$$
\widetilde{Z_3}
:=
\left\{
\frac{1}{\sqrt{i(i+1)}}\left(\sum_{j=1}^ix_j^2 - i x_{i+1}^2\right):~ 
i =1,2,\ldots,N-1
\right\}.
$$
Since the elements of $Z_2$ were orthogonal to the elements of $Z_3$, we conclude that $Z_2 \cup \widetilde{Z_3}$ is an orthogonal basis of $H_2$.
In particular, we have
\begin{equation*}\label{eq:orthogon}
	\left<p,q\right>=0
	~\text{ for}~ p,q \in Z_1 \cup Z_2 \cup \widetilde{Z_3} \text{ with } p \neq q,\quad
	\inpr{p,p}=1 ~\text{ for } p\in \widetilde{Z_3}.
\end{equation*}

\subsection{Orthogonality}\label{sec:Orthogonality}
In this section, we show several orthogonality results which will be used to obtain the mutual orthogonality of elements of the set $X_k$ with respect to the scalar products in $L^2(\Omega)$ and $H^1(\Omega)$, where $X_k$ are defined in the proof of Theorem \ref{thm1} via the elements of $Z_1$, $Z_2$, and $\widetilde{Z_3}$.

In order to deal with the orthogonality in $H^1(\Omega)$, let us provide several useful expressions.
For a radial $C^1$-function $g$ and for any $p\in Z_i$, $i=1,2,3$, we have
$$
\nabla(g(r)p)=g'(r)\frac{x}{r}p+g(r)\nabla p
$$
and
\begin{equation*}\label{eq:nabla1}
	\nabla(g(r)p) \nabla(g(r)q) =
	(g'(r))^2pq+g'(r)g(r)\frac{x}{r} (p\nabla q+q\nabla p)+g^2(r)\nabla p \nabla q.
\end{equation*}   
It is easy to see that
$$
\nabla p
=
\left\{
\begin{aligned}
	&e_i &&\text{for } p=x_i,\\
	&x_je_i+x_ie_j &&\text{for } p=x_ix_j,\\
	&2(x_ie_i-x_je_j) &&\text{for } p=x_i^2-x_j^2,
\end{aligned}
\right.
\qquad 
x  \nabla p
=
\left\{
\begin{aligned}
	&p &&\text{for } p=x_i,\\
	&2p &&\text{for } p=x_ix_j,\\ 
	&2p &&\text{for } p=x_i^2-x_j^2,
\end{aligned}
\right.
$$ 
and 
\begin{equation} \label{eq:nabla2}
	x (p\nabla q+ q \nabla p)
	=
	\left\{
	\begin{aligned}
		&2pq &&\text{for } p,q\in Z_1, \\
		&4pq &&\text{for } p,q\in Z_2\cup Z_3,\\
		&3pq &&\text{for } p\in Z_1,q\in Z_2\cup Z_3.
	\end{aligned}
	\right.
\end{equation}
Furthermore, denoting by $\delta_{i,j}$ the Kronecker delta, we have
\begin{equation}\label{eq:nabla3}
	\nabla p   \nabla q
	=
	\left\{
	\begin{aligned}
		&\delta_{i,j} 
		&&\text{for } p=x_i,q=x_j,\\
		&x_l\delta_{i,k}+x_k\delta_{i,l} 
		&&\text{for } p=x_i,q=x_kx_l,\\
		&2(x_k\delta_{i,k}-x_l\delta_{i,l}) 
		&&\text{for } p=x_i,q=x_k^2-x_l^2,\\
		&x_i(x_k\delta_{j,l}+x_l \delta_{j,k})+  x_j(x_k\delta_{i,l}+x_l \delta_{i,k}) 
		&&\text{for } p=x_ix_j, q=x_kx_l,\\
		&2x_i(x_k\delta_{j,k}-x_l \delta_{j,l})+  2x_j(x_k\delta_{i,k}-x_l \delta_{i,l}) 
		&&\text{for } p=x_ix_j, q=x_k^2-x_l^2,\\
		&4x_i(x_k \delta_{i,k} - x_l \delta_{i,l})-4x_j(x_k \delta_{j,k} - x_l \delta_{j,l}) 
		&&\text{for } p=x_i^2-x_j^2, q=x_k^2-x_l^2.
	\end{aligned}
	\right.  
\end{equation}

\medskip
We will separately consider the cases $N=2$ and $N \geq 3$.
In the planar case $N=2$, $Z_3$ is equivalent to $\widetilde{Z_3}$, up to a multiplication by a constant.
Therefore, using Proposition \ref{2Orthogonality00} \ref{2Orthogonality00:1}, \ref{2Orthogonality00:2}, and the expressions above, we deduce the following result.
\begin{lemma}\label{lem:z1z2z3-tildeN2}
	Let $\Om\subset\mathbb{R}^2$ be a bounded domain symmetric of order $4$. 
	Let $g$ be a positive radial $C^1$-function on $\mathbb{R}^N$. 
	Then there exists $T \in SO(2)$ such that for any $p,q\in Z_1\cup  Z_2\cup \widetilde{Z_3}$ with $p\ne q$ we have 
	\begin{equation}\label{eq:orth0}
		\int_{T(\Om)}
		g(r)p(x)q(x)\,dx=0
		\quad \text{ and } \quad 
		\int_{T(\Om)} \nabla(g(r)p(x))  \nabla(g(r)q(x)) \,dx=0.
	\end{equation}
\end{lemma}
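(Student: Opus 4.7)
For $N=2$, the basis in question consists of only four elements: $Z_1 = \{x_1, x_2\}$, $Z_2 = \{x_1 x_2\}$, and $\widetilde{Z_3}$ reduces to the single element $\tfrac{1}{\sqrt{2}}(x_1^2-x_2^2)$.  Since there are only $\binom{4}{2}=6$ pairs $(p,q)$ with $p\neq q$, the plan is to verify both orthogonality identities in \eqref{eq:orth0} pair by pair, leveraging the integral identities collected in Proposition \ref{2Orthogonality00}.  Note first that symmetry of order $4$ implies symmetry of order $2$ (hence central symmetry in $\mathbb{R}^2$), so both parts \ref{2Orthogonality00:1} and \ref{2Orthogonality00:2} of Proposition \ref{2Orthogonality00} apply to $\Omega$ and to any rotation $T(\Omega)$ of it.

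For the $L^2$ orthogonality, for each pair $(p,q)$ the product $pq$ is a monomial (or a sum of two monomials) of degree $\leq 4$ with at least one odd exponent, with the single exception of the pair $(x_1 x_2,\, \tfrac{1}{\sqrt{2}}(x_1^2-x_2^2))$.  For the five ``easy'' pairs, every monomial appearing in $pq$ has the form $x_i x_j^{2m}$ or $x_i^{2m+1}$, so \eqref{eq:2proport1} forces $\int_{T(\Omega)} g(r)\,pq\,dx=0$ for any $T\in SO(2)$.  For the remaining pair, $pq=\tfrac{1}{\sqrt{2}}(x_1^3 x_2 - x_1 x_2^3)$, and I invoke \eqref{eq:2proport3} to choose $T\in SO(2)$ so that $\int_{T(\Omega)} g(r)\,x_1 x_2^3\,dx = 0$.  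Then, applying the change of variables $x=R^{2\pi/4}_{1,2}(y)$ on the order-$4$-symmetric domain $T(\Omega)$ yields $\int_{T(\Omega)} g(r)\,x_1 x_2^3\,dx = -\int_{T(\Omega)} g(r)\,x_1^3 x_2\,dy$, so the second monomial also integrates to zero, and this single $T$ works for every pair simultaneously.

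For the $H^1$ orthogonality, I use the expansion
\[
\nabla(g(r)p)\cdot\nabla(g(r)q) = (g'(r))^2 pq + \frac{g'(r)g(r)}{r}\,x\cdot(p\nabla q+q\nabla p) + g^2(r)\,\nabla p\cdot\nabla q.
\]
By \eqref{eq:nabla2}, $x\cdot(p\nabla q+q\nabla p)$ is a constant multiple of $pq$, so the first two terms combine into a radial function times $pq$; their integral over $T(\Omega)$ vanishes by the $L^2$ argument above.  It therefore remains to show that $\int_{T(\Omega)} g^2(r)\,\nabla p\cdot\nabla q\,dx=0$ for each pair.  A direct computation with the formulas in \eqref{eq:nabla3} shows that $\nabla p\cdot\nabla q$ is either identically zero (for $(x_1,x_2)$ and for $(x_1 x_2, x_1^2-x_2^2)$) or a constant multiple of $x_1$ or $x_2$ (for the four pairs involving one element of $Z_1$ and one element of $Z_2\cup\widetilde{Z_3}$).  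In the latter case the integral vanishes by \eqref{eq:2proport1}.

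The only genuine obstacle is handling the pair $(x_1 x_2, x_1^2-x_2^2)$ at the $L^2$ level, where no rotation of $\Omega$ itself forces the integral to vanish; this is precisely why the auxiliary rotation $T$ appears in the statement of the lemma, and why its existence is guaranteed by the continuity argument behind \eqref{eq:2proport3}.  Once this single $T$ is fixed, every other verification is routine symmetry bookkeeping.
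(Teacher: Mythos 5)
Your overall strategy---checking the six pairs one by one against the integral identities of Proposition \ref{2Orthogonality00} and the gradient expansion \eqref{eq:nabla1}---is exactly the kind of ``straightforward calculation'' the paper's one-line proof alludes to, and five of the six pairs, together with the $L^2$ half of the sixth, are handled correctly. Two small remarks: for the pair $(x_1,x_2)$ the product $x_1x_2$ is \emph{not} of the form $x_ix_j^{2m}$ or $x_i^{2m+1}$ (both exponents are odd), so \eqref{eq:2proport1} does not apply and you need the first identity in \eqref{eq:2proport0} instead, which uses the order-$4$ symmetry rather than the central symmetry; and you should note explicitly that $T(\Om)$ inherits the order-$4$ (hence central) symmetry of $\Om$, since all identities are being applied on $T(\Om)$. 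Also, the auxiliary propositions are stated for \emph{positive} radial weights while the weight $(g')^2+c\,g'g/r$ arising in the gradient computation need not be positive; this is harmless because the underlying change-of-variables arguments never use positivity, but it deserves a word.

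The genuine gap is in the $H^1$ orthogonality of the pair $p=x_1x_2$, $q=\tfrac{1}{\sqrt2}(x_1^2-x_2^2)$. Since $\nabla p\cdot\nabla q\equiv 0$, the cross term reduces to $\int_{T(\Om)}\bigl[(g'(r))^2+\tfrac{4g'(r)g(r)}{r}\bigr]pq\,dx$, whose radial weight is $(g')^2+4g'g/r$, \emph{not} $g$. The rotation $T$ you extract from \eqref{eq:2proport3} comes from an intermediate-value argument applied to $\theta\mapsto\int_{R^{\theta}_{1,2}(\Om)}g(r)x_1x_2^3\,dx$ and therefore annihilates only the $L^2$ cross term. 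Writing $x_1x_2(x_1^2-x_2^2)=\tfrac14 r^4\sin 4\phi$ in polar coordinates, one sees that for any radial weight $w$ the function $\theta\mapsto\int_{R^{\theta}_{1,2}(\Om)}w(r)\,x_1x_2(x_1^2-x_2^2)\,dx$ equals $A_w\cos 4\theta+B_w\sin 4\theta$, so its zeros form an arithmetic progression of step $\pi/4$ whose phase depends on $(A_w,B_w)$ and hence on $w$. The two weights $g$ and $(g')^2+4g'g/r$ thus impose two independent conditions on the single rotation angle, and their zero sets coincide only when $A_gB_h-A_hB_g=0$, which the order-$4$ symmetry alone does not guarantee (an order-$4$ symmetric domain whose ``frequency-$4$ distortion'' has a radius-dependent phase gives nonproportional pairs $(A_g,B_g)$ and $(A_h,B_h)$). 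Hence the sentence ``their integral over $T(\Omega)$ vanishes by the $L^2$ argument above'' is unjustified as written: you need either an argument producing a \emph{common} zero of both trigonometric functions, or a stronger hypothesis (order-$8$ symmetry, under which \eqref{eq:2proport2} kills the cross term for every radial weight, or a reflection symmetry). Since the paper's own proof is only ``by straightforward calculations,'' it gives no indication of how this point is resolved; you have in fact isolated the one step of the lemma that is not routine, but your proposal does not close it.
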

\begin{proof}
	By straightforward calculations.
\end{proof}

\begin{remark}\label{rem:lem:z1}
	The rotation $T$ in Lemma \ref{lem:z1z2z3-tildeN2} is used only in the case $p \in Z_2$, $q \in \widetilde{Z_3}$, that is, $p=x_1x_2$, $q=(x_1^2-x_2^2)/\sqrt{2}$, 
	while in all other cases \eqref{eq:orth0} holds for $\Omega$ itself.
\end{remark}

Let us now consider the case $N \geq 3$.
In order to avoid bulky calculations caused by the structure of $\widetilde{Z_3}$, we first give explicit relations between the integrals $\int_\Om g(r)  p(x)q(x)\,dx$, $\int_\Om \nabla(g(r)p(x))  \nabla(g(r)q(x)) \,dx$ and the inner product $\inpr{p,q}$ for polynomials $p,q \in Z_1 \cup Z_2 \cup Z_3$.
\begin{lemma}\label{Orthogonality2}
    Let $\Om\subset\mathbb{R}^N$ be a bounded domain symmetric of order $4$, $N \geq 3$. 
    Let $g$ be a positive radial function on $\mathbb{R}^N$. 
	Then 
\begin{equation}\label{eq:ortho1}
   \int_\Om g(r) p(x)q(x)\,dx=
   \left\{
   \begin{aligned}
   &A \inpr{p,q}  &&\text{for }~ p,q \in Z_1,\\
   &2C\left<p,q\right>  &&\text{for }~  p,q \in Z_2,\\
   &(B-C)\left<p,q\right> &&\text{for }~ p,q \in Z_3, \\
   &0 && \text{otherwise},
   \end{aligned}
  \right.
\end{equation}      
where $A$, $B$, and $C$ are given by Proposition \ref{Orthogonality0} \ref{Orthogonality0:3}, \ref{Orthogonality0:4}.
\end{lemma}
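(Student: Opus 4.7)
The plan is to verify identity \eqref{eq:ortho1} by a case analysis on which of the sets $Z_1, Z_2, Z_3$ contain $p$ and $q$. The workhorse will be a \emph{parity principle}: since symmetry of order $4$ implies symmetry of order $2$ via $(R_{i,j}^{2\pi/4})^2 = R_{i,j}^{2\pi/2}$, for any multi-index $\gamma$ with $\gamma_i + \gamma_j$ odd for some $i \neq j$, the change of variables $x = R_{i,j}^{2\pi/2}(y)$ gives
\[
\int_\Omega g(r)\, x^\gamma\, dx = (-1)^{\gamma_i+\gamma_j} \int_\Omega g(r)\, x^\gamma\, dx = 0.
\]

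First I would dispose of all cases where $\inpr{p,q}=0$, namely when $p$ and $q$ lie in different $Z_k$'s or when $p\ne q$ belong to the same $Z_1$ or $Z_2$, as well as the off-diagonal $Z_3\times Z_3$ case with disjoint support. An inspection shows that in each such case, every monomial $x^\gamma$ appearing in the expansion of $pq$ has powers $\gamma_1,\dots,\gamma_N$ that are not all of the same parity; using $N\ge 3$ one can always find a pair of indices $i\ne j$ with $\gamma_i+\gamma_j$ odd, and the parity principle forces the integral to vanish. Concretely: if one of $p,q$ lies in $Z_1$ (degree one) and the other in $Z_2\cup Z_3$ (degree two), some variable appears to odd power and some other to even power; if $p\ne q$ are both in $Z_1$, $pq=x_ix_j$ with $i\ne j$; if $p\ne q$ are both in $Z_2$, $pq$ has the form $x_ix_jx_kx_l$ or $x_i^2x_jx_l$ with $j\ne l$; and if $p\in Z_2$, $q\in Z_3$, the monomials take the form $x_ix_jx_k^2$ with $i\ne j$.

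The remaining cases are handled by direct computation. The diagonal cases $p=q\in Z_1$ and $p=q\in Z_2$ are immediate from the definitions of $A$ and $C$ in Proposition \ref{Orthogonality0} \ref{Orthogonality0:3}, \ref{Orthogonality0:4} combined with \eqref{eq:<p,p>}. For $p=x_i^2-x_{i+1}^2$, $q=x_j^2-x_{j+1}^2$ in $Z_3$, I would expand
\[
pq = x_i^2x_j^2 - x_i^2x_{j+1}^2 - x_{i+1}^2x_j^2 + x_{i+1}^2x_{j+1}^2
\]
and apply Proposition \ref{Orthogonality0} \ref{Orthogonality0:3}, \ref{Orthogonality0:4} to obtain $2(B-C)$ when $i=j$; $-(B-C)$ when $|i-j|=1$ (a single $B$ term enters with a minus sign, the remaining three contributing $C,-C,+C$); and $0$ when $|i-j|\ge 2$ (four $C$'s with signs $+,-,-,+$). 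These precisely match $(B-C)\inpr{p,q}$ in view of \eqref{eq:<p,p>} and \eqref{eq:intpq}.

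There is no serious obstacle: the whole argument is bookkeeping, with the parity principle uniformly handling every off-diagonal case. The only small subtlety is verifying in the borderline dimension $N=3$ that the free index needed for the parity principle always exists for the monomials appearing above; one checks that since the offending monomial in each vanishing case contains at least one coordinate of odd power and at least one of even power, an appropriate rotation $R_{i,j}^{2\pi/2}$ can always be chosen.
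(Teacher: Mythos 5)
Your overall architecture (reduce everything to vanishing of monomial integrals, then direct computation with $A$, $B$, $C$ for the diagonal and $Z_3\times Z_3$ cases) matches the paper's, and your $Z_3\times Z_3$ bookkeeping is correct. However, there is a genuine gap in the off-diagonal analysis: your parity principle uses only the order-$2$ rotations $R^{2\pi/2}_{i,j}$, and these are not sufficient for all of the ``otherwise'' cases. Concretely, take $N=3$, $p=x_1\in Z_1$, $q=x_2x_3\in Z_2$; then $pq=x_1x_2x_3$ has exponent vector $(1,1,1)$, so \emph{every} pair satisfies $\gamma_i+\gamma_j=2$, and no rotation $R^{2\pi/2}_{i,j}$ changes the sign of the integrand. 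Your closing ``subtlety check'' asserts that each offending monomial contains at least one coordinate of odd power and one of even power, but $x_1x_2x_3$ in $N=3$ has no coordinate of even power, so the check fails exactly where it is needed. The same problem occurs for $N=4$ with $p=x_1x_2$, $q=x_3x_4$, giving the monomial $x_1x_2x_3x_4$ with exponents $(1,1,1,1)$. One can verify that the group generated by the $R^{2\pi/2}_{i,j}$ leaves these monomials invariant, so no amount of order-$2$ symmetry will make these integrals vanish.

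The missing ingredient is precisely the genuinely order-$4$ rotation: under $x=R^{2\pi/4}_{i,j}(y)$ one has $x_i=-y_j$, $x_j=y_i$, hence $x_ix_jx_k^mx_l^n=-y_iy_jy_k^my_l^n$ for $k,l\notin\{i,j\}$, which kills both problematic monomials. This is the content of Proposition \ref{Orthogonality0} \ref{Orthogonality0:2}, which the paper invokes (together with Proposition \ref{Orthogonality00} \ref{Orthogonality00:2}) to establish the vanishing in \eqref{eq:intpq2}; your proof never uses it. Once you add this order-$4$ argument to cover the cases where $pq$ is a product of distinct first-power variables exhausting all coordinates, the rest of your case analysis goes through.
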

\begin{proof}
We see from the definitions of $A$, $B$, and $C$ that
\begin{equation}\label{eq:<p,p>2}
\int_\Om g(r) p^2(x)\,dx
=
\left\{
\begin{aligned}
        &A &&\text{for }~ p\in Z_1,\\
        &C &&\text{for }~  p\in Z_2, \\
        &2(B-C) &&\text{for }~    p\in Z_3.
\end{aligned}
\right.
\end{equation}
Moreover, if $p\ne q$, then, using Proposition \ref{Orthogonality00} \ref{Orthogonality00:2} and Proposition \ref{Orthogonality0}, we obtain
\begin{equation}\label{eq:intpq2}
\int_\Om g(r) p(x)q(x)\,dx
=
\left\{
\begin{aligned}
&-(B-C) &&\text{if } p,q \in Z_3 \text{ have a common index},\\
&0 &&\text{otherwise}.
\end{aligned}
\right.
\end{equation}
Combining the expressions \eqref{eq:<p,p>2}, \eqref{eq:intpq2} with \eqref{eq:<p,p>}, \eqref{eq:intpq}, we easily derive \eqref{eq:ortho1}.
\end{proof}

\begin{lemma}\label{OrthogonalityGradient}
    Let $\Om\subset\mathbb{R}^N$ be a bounded domain symmetric of order $4$, $N \geq 3$. 
    Let $g$ be a positive radial $C^1$-function on $\mathbb{R}^N$. 
    In view of Proposition \ref{Orthogonality0} \ref{Orthogonality0:3}, \ref{Orthogonality0:4}, for any $i,j \in \{1,2,\dots,N\}$ we denote 
    \begin{align*}
    A'= \int_\Om \left[g'(r)+\frac{2g(r)}{r}\right]g'(r)x_i^2 \,dx, 
    \quad
       B'=\int_\Om \left[g'(r)+\frac{4g(r)}{r}\right]g'(r)x_i^4 \,dx,\\
    C'=\int_\Om \left[g'(r)+\frac{4g(r)}{r}\right]g'(r)x_i^2x_j^2 \,dx,    \quad 
    D'=\int_\Om g^2(r) x_i^2\,dx, 
    \quad 
    E'=\int_\Om g^2(r)\,dx.
    \end{align*}
Then
\begin{equation}\label{eqn:grad3}
\int_\Om \nabla(g(r)p(x))  \nabla(g(r)q(x)) \,dx
=
\left\{
\begin{aligned}
&(A'+E')\inpr{p,q} &&\text{for } p,q\in Z_1,\\
&(2C'+4D')\inpr{p,q} &&\text{for } p,q\in Z_2,\\
&(B'-C'+4D')\inpr{p,q} &&\text{for }  p,q \in Z_3,\\
&0 &&\text{otherwise}.
\end{aligned}
\right.
\end{equation}
\end{lemma}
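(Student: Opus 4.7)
The proof proceeds by direct computation. The plan is to expand $\nabla(g(r)p)\cdot\nabla(g(r)q)$ by the product rule into three pieces, namely
$$
\nabla(g(r)p)\cdot\nabla(g(r)q)
=
(g'(r))^2 pq
+
\frac{g'(r)g(r)}{r}\,x\cdot(p\nabla q+q\nabla p)
+
g^2(r)\,\nabla p\cdot\nabla q,
$$
then substitute the case-by-case expressions from \eqref{eq:nabla2} and \eqref{eq:nabla3}, integrate over $\Omega$, and identify each resulting integral with one of the constants $A',B',C',D',E'$, or show it vanishes.

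First I would handle the \emph{off-diagonal} cases $p\in Z_a$, $q\in Z_b$ with $a\neq b$, together with the same-group cases where $p\neq q$. In all these situations, both the combined ``first plus middle'' integrand (a radial function times $pq$) and the ``third'' integrand (a radial function times the low-degree polynomial $\nabla p\cdot \nabla q$ from \eqref{eq:nabla3}) will be shown to integrate to zero over $\Omega$, by invoking Proposition \ref{Orthogonality00} \ref{Orthogonality00:2} for single-variable cancellations, Proposition \ref{Orthogonality0} \ref{Orthogonality0:2} for mixed products of the form $x_ix_jx_k^m x_l^n$, and the elementary identity $\int_\Omega g(r)x_ix_j\,dx=0$ for $i\neq j$. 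Although Propositions \ref{Orthogonality00} and \ref{Orthogonality0} are stated for positive radial weights, their proofs only use radiality, so the weights $(g'(r))^2$, $g'(r)g(r)/r$, and $g^2(r)$ are all admissible here.

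For the \emph{diagonal} entries $p=q$ I would compute explicitly. For $p=q=x_i\in Z_1$, the middle term contributes $2x_i^2 g'(r)g(r)/r$ and the third term contributes $g^2(r)$, so summing and integrating yields $A'+E'$, matching $(A'+E')\inpr{p,p}$ since $\inpr{p,p}=1$. For $p=q=x_ix_j\in Z_2$, the middle term contributes $4x_i^2x_j^2g'(r)g(r)/r$ and the third gives $g^2(r)(x_i^2+x_j^2)$, yielding $C'+2D'$, in agreement with $(2C'+4D')\inpr{p,p}$ since $\inpr{p,p}=\tfrac12$. For $p=q=x_i^2-x_j^2\in Z_3$, expanding $(x_i^2-x_j^2)^2=x_i^4-2x_i^2x_j^2+x_j^4$ and using the index-independence from Proposition \ref{Orthogonality0} \ref{Orthogonality0:3}, \ref{Orthogonality0:4} gives $2B'-2C'$ from the first two terms and $8D'$ from the third, so the total is $2(B'-C'+4D')$, again matching since $\inpr{p,p}=2$.

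The subtlest case, and the main obstacle, is $p,q\in Z_3$ sharing a common index, say $p=x_i^2-x_j^2$, $q=x_j^2-x_k^2$ with $i,j,k$ distinct. Here one expands $pq=x_i^2x_j^2-x_i^2x_k^2-x_j^4+x_j^2x_k^2$ to obtain $C'-C'-B'+C'=C'-B'$ from the combined first and middle terms (again using Proposition \ref{Orthogonality0} \ref{Orthogonality0:3}, \ref{Orthogonality0:4}); simultaneously \eqref{eq:nabla3} gives $\nabla p\cdot\nabla q=-4x_j^2$, so the third term contributes $-4D'$. Summing yields $C'-B'-4D'=-(B'-C'+4D')$, which exactly matches the claim since $\inpr{p,q}=-1$ by \eqref{eq:intpq}. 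The remaining $Z_3$-case, namely $p,q\in Z_3$ with no common index, reduces both contributions to zero and thus completes the verification of \eqref{eqn:grad3}.
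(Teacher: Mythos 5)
Your proof is correct and follows essentially the same route as the paper: the same product-rule decomposition into $(g'(r))^2pq+\tfrac{g'(r)g(r)}{r}\,x\cdot(p\nabla q+q\nabla p)+g^2(r)\,\nabla p\cdot\nabla q$, the same appeal to Propositions \ref{Orthogonality00} and \ref{Orthogonality0} (whose proofs indeed use only radiality of the weight) for the vanishing cases, and the same isolation of the pair $p,q\in Z_3$ with exactly one common index as the only nonzero off-diagonal contribution, yielding $4D'\inpr{p,q}$ from the gradient term. The only quibble is presentational: your opening claim that \emph{all} same-group cases with $p\neq q$ integrate to zero is contradicted, and correctly superseded, by your final paragraph, and all the diagonal computations check against the stated constants.
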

\begin{proof}

Let us take any $p,q\in Z_1 \cup Z_2 \cup Z_3$ such that $p\ne q$. 
If $p$ and $q$ have no common indices or have two common indices, then we easily get from \eqref{eq:nabla3} that  $\nabla p   \nabla q=0$. 
If $p\in Z_1\cup Z_2$, $q\in Z_2 \cup Z_3$, and $p,q$ have exactly one common index, then using Proposition \ref{Orthogonality00} \ref{Orthogonality00:2} (with $g^2(r)$ instead of $g(r)$), we derive from \eqref{eq:nabla3} that  $\int_\Om g^2(r) \nabla p   \nabla q \,dx=0$.  
Thus, we are left with the case when $p,q\in Z_3$ and they have exactly one common index. 
Assume, without loss of generality, that 
$p=x_i^2-x_{i+1}^2$ and $q=x_{i+1}^2-x_{i+2}^2$. 
Then $\inpr{p,q}=-1$ (see \eqref{eq:intpq}) and
$$
\int_\Om g^2(r) \nabla p   \nabla q \,dx=-4\int_\Om g^2(r)x_{i+1}^2\,dx=4D'\inpr{p,q}. 
$$ 
Therefore, for $p,q\in Z_1 \cup Z_2 \cup Z_3$ with $p\ne q$, we have
\begin{equation}\label{eqn:grad0}
\int_\Om g^2(r) \nabla p   \nabla q \,dx
=
\left\{
\begin{aligned}
&4D'\inpr{p,q} &&\text{for } p,q\in Z_3,\\
&0 &&\text{otherwise}.
\end{aligned}
\right.
\end{equation}   
On the other hand, recalling \eqref{eq:<p,p>}, in the case $p=q$ we get
\begin{equation}\label{eqn:grad1}
\int_\Om g^2(r) |\nabla p|^2\,dx
=
\left\{
\begin{aligned}
&E'\inpr{p,p} 
&&\text{for } p\in Z_1, \\
&4D'\inpr{p,p} 
&&\text{for } p\in Z_2 \cup Z_3.
\end{aligned}
\right.
\end{equation}                        
Finally, for $p,q\in Z_1 \cup Z_2 \cup Z_3$,
using Proposition \ref{Orthogonality00} \ref{Orthogonality00:2}, Proposition \ref{Orthogonality0}, and \eqref{eq:nabla2}, we obtain
\begin{equation}\label{eqn:grad2}
\int_\Om \left[(g'(r))^2pq+g'(r)g(r)\frac{x}{r} (p\nabla q+q\nabla p)\right] dx
=
\left\{
\begin{aligned} 
&A' \inpr{p,q} &&\text{for } p,q\in Z_1,\\
&2C'\inpr{p,q} &&\text{for } p,q\in Z_2,\\
&(B'-C')\inpr{p,q} &&\text{for } p,q\in Z_3,\\
&0 &&\text{otherwise}.
\end{aligned}
\right.
\end{equation}
Combining \eqref{eqn:grad0}, \eqref{eqn:grad1}, and \eqref{eqn:grad2}, we easily derive \eqref{eqn:grad3}.
\end{proof}

Now we are ready to obtain a counterpart of Lemma \ref{lem:z1z2z3-tildeN2} in the case $N \geq 3$.
\begin{lemma}\label{lem:z1z2z3-tilde}
Let $\Om\subset\mathbb{R}^N$ be a bounded domain symmetric of order $4$, $N \geq 3$. 
Let $g$ be a positive radial $C^1$-function on $\mathbb{R}^N$. 
Then for any $p,q\in Z_1\cup  Z_2\cup \widetilde{Z_3}$ with $p\ne q$ we have 
\begin{align*}
\int_\Om 
g(r)p(x)q(x)\,dx=0
\quad \text{ and } \quad 
\int_\Om \nabla(g(r)p(x))  \nabla(g(r)q(x)) \,dx=0.
\end{align*}
\end{lemma}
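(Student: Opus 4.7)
The plan is to reduce the assertion to the analogous statements for the basis $Z_1\cup Z_2 \cup Z_3$ already established in Lemmas \ref{Orthogonality2} and \ref{OrthogonalityGradient}, exploiting the bilinearity of both integrals and the fact that $\widetilde{Z_3}$ is obtained from $Z_3$ by a Gram--Schmidt orthogonalisation with respect to $\langle\cdot,\cdot\rangle$. The key observation is that the formulas in Lemmas \ref{Orthogonality2} and \ref{OrthogonalityGradient} all have the shape $\kappa\langle p,q\rangle$ with a constant $\kappa$ depending only on which block ($Z_1$, $Z_2$, $Z_3$, or ``cross'') the pair $(p,q)$ belongs to. Elements of $Z_1$ and $Z_2$ are unchanged under the passage $Z_3 \rightsquigarrow \widetilde{Z_3}$, so any pair drawn from $Z_1 \cup Z_2 \cup \widetilde{Z_3}$ expands into finite linear combinations of pairs from $Z_1\cup Z_2\cup Z_3$ to which those lemmas apply.

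I would split the verification into three cases. Case~1: $p,q \in Z_1 \cup Z_2$ with $p \neq q$; here the result is immediate from Lemma \ref{Orthogonality2} and Lemma \ref{OrthogonalityGradient}, since $\langle p,q\rangle =0$ by \eqref{eq:intpq}. Case~2: one of $p,q$ lies in $Z_1 \cup Z_2$ and the other, say $\tilde q$, lies in $\widetilde{Z_3}$. Writing $\tilde q = \sum_{j} d_j q_j$ with $q_j \in Z_3$, bilinearity gives
\begin{equation*}
\int_\Omega g(r)\, p(x)\tilde q(x)\, dx \;=\; \sum_{j} d_j \int_\Omega g(r)\, p(x) q_j(x)\, dx \;=\; 0,
\end{equation*}
because each summand falls under the ``otherwise'' branch of Lemma \ref{Orthogonality2}. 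The gradient integral vanishes by the same argument using the ``otherwise'' branch of Lemma \ref{OrthogonalityGradient}.

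Case~3: $\tilde p, \tilde q \in \widetilde{Z_3}$ with $\tilde p \neq \tilde q$. Writing $\tilde p = \sum_i c_i q_i$ and $\tilde q = \sum_j d_j q_j$ with $q_i, q_j \in Z_3$, Lemma \ref{Orthogonality2} yields
\begin{equation*}
\int_\Omega g(r)\, \tilde p(x)\tilde q(x)\, dx \;=\; \sum_{i,j} c_i d_j \,(B-C)\langle q_i, q_j\rangle \;=\; (B-C)\,\langle \tilde p, \tilde q\rangle \;=\; 0,
\end{equation*}
where the last equality holds by construction of $\widetilde{Z_3}$. The same computation, with the constant $B-C$ replaced by $B'-C'+4D'$ from Lemma \ref{OrthogonalityGradient}, gives the gradient orthogonality.

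The only real subtlety is ensuring that the scalar $\kappa$ in $\int = \kappa \langle p,q\rangle$ is truly uniform within the $Z_3$-block (so that it factors cleanly out of the Gram--Schmidt sum in Case~3), and that the cross-block integrals vanish identically (so that Case~2 collapses). Both facts are already built into the statements of Lemmas \ref{Orthogonality2} and \ref{OrthogonalityGradient}, so no further computation is needed and the proof essentially reduces to bookkeeping with bilinearity.
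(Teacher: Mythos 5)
Your proposal is correct and follows essentially the same route as the paper: both reduce the claim to Lemmas \ref{Orthogonality2} and \ref{OrthogonalityGradient}, using that on each block the integrals equal a fixed constant times $\inpr{p,q}$ so that the $\inpr{\cdot,\cdot}$-orthogonality of $Z_1\cup Z_2\cup\widetilde{Z_3}$ transfers directly. The paper phrases the $\widetilde{Z_3}$ case via ``the integral defines an equivalent inner product on $Z_3$, so Gram--Schmidt produces the same set,'' whereas you make the identical point explicit through bilinearity; your version even avoids needing $B>C$.
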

\begin{proof}
For $p,q\in Z_3$, we get from \eqref{eq:ortho1} that
$$
(B-C)
\left<p,q\right>
=
\int_\Om g(r) p(x)q(x)\,dx.
$$
Taking $p=q$ and recalling that $g$ is positive, we see that $B>C$.
That is, the integral $\frac{1}{B-C}\int_\Om g(r) p(x)q(x)\,dx$ defines an equivalent inner product on $Z_3$, and hence the Gram-Schmidt orthogonalisation of $Z_3$ with respect to this inner product also produces $\widetilde{Z_3}$.
Thus, using \eqref{eq:ortho1}, we easily conclude that 
$$
\int_\Om g(r)p(x)q(x)\,dx=0
\quad \text{ for } p,q\in Z_1\cup Z_2\cup \widetilde{Z_3} \text{ with } p \neq q.
$$
Similarly, using \eqref{eqn:grad3}, we conclude that 
$$
\int_\Om \nabla(g(r)p(x))  \nabla(g(r)q(x)) \,dx=0 
\quad \text{ for } p,q\in Z_1\cup Z_2\cup \widetilde{Z_3} \text{ with } p \neq q,
$$
which completes the proof.
\end{proof}

\begin{remark}\label{remark:norm}
For any $p \in Z_1 \cup Z_2 \cup Z_3$, from \eqref{eqn:grad3} we get
\begin{align*} 
&\int_\Om |\nabla g(r)p(x)|^2\,dx\\
&=
\left\{
\begin{aligned}
&\int_\Om \left[{(g'(r))^2}p^2(x)+\frac{2g'(r)g(r)}{r}p^2(x)+g^2(r)\right] dx  &&\text{for }~ p=x_i, \\
&\int_\Om \left[{(g'(r))^2}p^2(x)+\frac{4g'(r)g(r)}{r}p^2(x)+g^2(r)(x_i^2+x_j^2)\right] dx  &&\text{for }~ p=x_i x_j, \\
&\int_\Om \left[{(g'(r))^2}p^2(x)+\frac{4g'(r)g(r)}{r}p^2(x)+4g^2(r)(x_i^2+x_j^2))\right] dx  &&\text{for }~ p=x_i^2-x_j^2.
\end{aligned}
\right.
\end{align*}
In particular, for the function $G:=G_l$ defined as in \eqref{eq:G} and for every $i\in \{1,2,\ldots,N\}$ we have
\begin{align*}
&\int_\Om \left|\nabla\left( \frac{G(r)}{r}x_i\right)\right|^2dx
=
\int_\Om \left(
\frac{(G'(r))^2}{r^2}x_i^2
-
\frac{G^2(r)}{r^4}x_i^2
+
\frac{G^2(r)}{r^2}
\right) dx,\\
&\int_\Om \left|\nabla\left( \frac{G(r)}{r^2}x_ix_j\right)\right|^2dx
=
\int_\Om \left(
\left[
\frac{(G'(r))^2}{r^4}
-
\frac{4G^2(r)}{r^6}
\right]
x_i^2x_j^2
+
\frac{G^2(r)}{r^4}(x_i^2+x_j^2)\right) dx.
\end{align*}
Further, we can deduce that
\begin{align*}
&\int_\Om \left|\nabla\left( \frac{G(r)}{r^2\sqrt{i(i+1)}}\left[\sum_{j=1}^ix_j^2-ix_{i+1}^2
\right]\right)\right|^2 dx\\
&=\frac{1}{i(i+1)}\int_\Om\left( \left[\frac{(G'(r))^2}{r^4}-\frac{4G^2(r)}{r^6}\right]\left[\sum_{j=1}^ix_j^2-ix_{i+1}^2
\right]^2+ \frac{4G^2(r)}{r^4}\left(\sum_{j=1}^ix_j^2+ix_{i+1}^2
\right)\right)dx.
\end{align*}
\end{remark}

	\medskip
	\noindent
	\textbf{Acknowledgments.}
	T.V.~Anoop was supported by the INSPIRE Research Grant\\ DST/INSPIRE/04/2014/001865.
	V.~Bobkov was supported in the framework of implementation of the development program of Volga Region Mathematical Center (agreement no.~075-02-2021-1393).
	The authors would like to thank Professor L. Sadun for a helpful discussion regarding Section \ref{subsec:q}. 
	The authors would also like to thank the anonymous referees for valuable suggestions and remarks.

\end{document}